\newtheorem{theorem}{Theorem}[section]
\newtheorem{proposition}[theorem]{Proposition}
\newtheorem{lemma}[theorem]{Lemma}
\newtheorem{definition}[theorem]{Definition}
\newtheorem{example}[theorem]{Example}
\newtheorem{rmk}[theorem]{Remark}
\newtheorem{corollary}[theorem]{Corollary}
\newtheorem{conjecture}[theorem]{Conjecture}
\newenvironment{remark}{\begin{rmk}\rm}{\end{rmk}}
\numberwithin{equation}{section}
\newcommand{\N}{\mathbf{N}}
\newcommand{\Z}{\mathbf{Z}}
\renewcommand{\phi}{\varphi}
\renewcommand{\emptyset}{\varnothing}
\def\Ddots{\mathinner{\mkern1mu\raise\p@
\vbox{\kern7\p@\hbox{.}}\mkern2mu
\raise4\p@\hbox{.}\mkern2mu\raise7\p@\hbox{.}\mkern1mu}}
\newcommand{\s}{\mathfrak{S}}
\newcommand{\co}{\operatorname{co}}
\title{A canonical expansion of the product of two Stanley symmetric functions}
\author{Nan Li}
\begin{document}
\maketitle

\begin{abstract}
We study the problem of expanding the product of two Stanley symmetric functions $F_w\cdot F_u$ into Stanley symmetric functions in some natural way. Our approach is to consider a Stanley symmetric function as a stabilized Schubert polynomial $F_{w}=\lim_{n\to \infty}\mathfrak{S}_{1^{n}\times w}$, and study the behavior of the expansion of $\s_{1^n\times w}\cdot\s_{1^n\times u}$ into Schubert polynomials, as $n$ increases. We prove that this expansion stabilizes and thus we get a natural expansion for the product of two Stanley symmetric functions. In the case when one permutation is Grassmannian, we have a better understanding of  this stability. We then study some other related stable properties, which provides a second proof of the main result.
\end{abstract}

\section{Introduction}
In \cite{RS}, Stanley defined a homogeneous power series $F_w$ in infinitely many variables $x_1,x_2,\dots$, to compute the number of reduced decompositions of a given permutation $w$. He also proved that $F_w$ is symmetric, and $F_w$ is now referred to as a Stanley symmetric function. Our convention is that $F_w$ means the usual $F_{w^{-1}}$ as defined in \cite{RS}. It is shown in \cite{bal} that  $$F_w=s_{D(w)},$$  where $D(w)$ is the diagram of $w$ and $s_{D(w)}$ is the generalized Schur function defined in terms of the column-strict balanced labellings of $D(w)$. We are interested in the problem of expanding the product of two Stanley symmetric functions $F_w\cdot F_u$ into Stanley symmetric functions. The hope is that we can explain the coefficients in terms of $D(w)$ and $D(u)$, as a generalized Littlewood-Richardson rule for Schur functions.

However, since the Stanley symmetric functions are not linearly independent, we want to expand them in some natural way. For $w\in S_m$ and $u\in S_n$, denote by $w\times u$ the permutation $v\in S_{m+n}$, with one line notation: $w(1)\cdots w(m) (u(1)+m)\cdots (u(n)+m)$. Also, by $1^n$, we mean $1\times 1\times \cdots \times 1=123\cdots n$. For example, $1^2\times 2134=124356$. We consider a Stanley symmetric function as a stabilized Schubert polynomial \cite{mac}:
\begin{equation}\label{limit}
F_{w}=\lim_{n\to \infty}\mathfrak{S}_{1^{n}\times w}.
\end{equation}

Divided difference operators were first used by Bernstein-Gelfand-Gelfand \cite{bern} and Demazure \cite{dema} for the study of the cohomology of flag manifolds. Later, Lascoux and Schu\"{u}tzenberger \cite{poly} developed the theory of Schubert polynomials based on divided difference operators. The collection $\{\s_w\mid w\in S_n\}$ of Schubert polynomials determines an integral basis for the cohomology ring of the flag manifold, and thus there exist integer structure constants $c_{wu}^{v}$ such that
\begin{equation*}
\s_w\cdot\s_u=\sum_{v}c_{wu}^v \s_v.
\end{equation*}
It is a long standing question to find a combinatorial description of these constants. Some special cases are known. The simplest but important case is Monk's rule \cite{monk}, which corresponds to the case when one of the Schubert polynomials is indexed by a simple transposition. A generalized Pieri rule was conjectured by Lascoux and Schu\"{u}tzenberger \cite{poly}, where they also sketched an algebraic proof. It was conjectured by Bergeron and Billey \cite{bb} in another form, and was proved by Sottile \cite{pieri} using geometry, and by Winkel \cite{combpieri} via a combinatorial proof. There are also results about the case of a Schubert polynomial times a Schur polynomial, for example see \cite{koh}, \cite{growth} and \cite{pre}.

In order to study the expansion of $F_w\cdot F_u$, we study the behavior, as $n$ increases, of the expansion of $\mathfrak{S}_{1^{n}\times w}\cdot \mathfrak{S}_{1^{n}\times u}$ into Schubert polynomials. Let us look at a toy example when $u=t_{m,m+1}$, a simple transposition.

By Monk's rule \cite{monk}, we have
$$\s_w\cdot \s_{t_{m,m+1}}=\sum_{\substack{j\le m<k\\ \ell(wt_{jk})=\ell(w)+1}}\s_{wt_{jk}},$$
where $\ell(w)$ is the length of the permutation $w$ and $wt_{jk}$ is the permutation obtained from $w$ by exchanging $w(j)$ and $w(k)$. Notice that $1\times t_{m,m+1}=t_{m+1,m+2}$.
Then for $\s_{1\times w}\cdot \s_{1\times t_{m,m+1}}$, we will have a term $\s_{1\times wt_{jk}}$ corresponding to each term $\s_{wt_{jk}}$ in the expansion of $\s_w\cdot \s_{t_{m,m+1}}$.  Let the position of $1$ in $w$ be $s$, i.e., $w^{-1}(1)=s$.  If $s\le m$, then there are no more permutations; otherwise, if $s>m$,  we get one more permutation $(1\times w)t_{1,s+1}$. This holds for all $\s_{1^n\times w}\cdot \s_{1^n\times t_{m,m+1}}$. More precisely, we have
$$\s_{1^n\times w}\cdot \s_{1^n\times t_{m,m+1}}=\sum_{\substack{j\le m<k\\ \ell(wt_{jk})=\ell(w)+1}}\s_{1^n\times wt_{jk}}(+\s_{1^{n-1}\times (1\times w)t_{1,s+1}}, \text{ if } s>m).$$
Now taking the limit for $n\to \infty$, we get the following canonical expansion:
$$F_{ w}\cdot F_{t_{m,m+1}}=\sum_{\substack{j\le m<k\\ \ell(wt_{jk})=\ell(w)+1}}F_{wt_{jk}}(+F_{(1\times w)t_{1,s+1}}, \text{ if } s>m).$$

Let us look at another example for $w=3241$ and $u=4312$. Consider $\s_{1^n\times 3241}\cdot\s_{1^n\times 4312}$ as $n$ increases. For $n=0,1,2$, we have
\begin{align*}
\s_{3241}\cdot\s_{4312}=&\s_{642135}\\
\s_{1\times 3241}\cdot\s_{1\times 4312}=&\s_{1\times 642135}+ \underline{\s_{265314}+\s_{2743156}+\s_{356214}+\s_{364215}+\s_{365124}}
    \\&\underline{+ \s_{462315}+\s_{561324}}\\
\s_{1^2\times 3241}\cdot\s_{1^2\times 4312}=&\s_{1^2\times 642135}
    + \s_{1\times 265314}+\s_{1\times 2743156}+\s_{1\times 356214}+\s_{1\times 364215}+\s_{1\times 365124}\\&+\s_{1\times 462315}+\s_{1\times 561324}+\underline{\s_{2375416}+\s_{246531}+\s_{256341}}.
    \end{align*}
Notice that as $n$ increases, we keep all the permutations appearing in the previous case and add some new permutations (the underlined terms). In this example, the expansion stabilizes after $n=2$, i.e., we do not add new permutations for $n>2$, i.e.,
   \begin{align*}
\s_{1^n\times 3241}\cdot\s_{1^n\times 4312}=&\s_{1^n\times 642135}
    + \s_{1^{n-1}\times 265314}+\s_{1^{n-1}\times 2743156}+\s_{1^{n-1}\times 356214}+\s_{1^{n-1}\times 364215}\\&+\s_{1^{n-1}\times 365124}+\s_{1^{n-1}\times 462315}+\s_{1^{n-1}\times 561324}+\s_{1^{n-2}\times 2375416}+\s_{1^{n-2}\times 246531}\\&+\s_{1^{n-2}\times 256341}.
   \end{align*}
   Then taking $n\to \infty$, we have
   \begin{align*}
F_{3241}\cdot F_{4312}=& F_{ 642135}+ F_{ 265314}+F_{ 2743156}+F_{356214}+F_{ 364215}+F_{365124}+F_{462315}+F_{ 561324}\\&+F_{2375416}+F_{246531}+F_{256341}.
\end{align*}
The stability of the expansion $\mathfrak{S}_{1^{n}\times w}\cdot \mathfrak{S}_{1^{n}\times u}$ we observed in the previous two examples are true in general. Here is the main result of this paper.
\begin{theorem}\label{main} Let $w,u$ be two permutations.
\begin{enumerate}
\item  Suppose $\s_w\cdot \s_u=\sum_{v_0\in V_0}c_{w,u}^{v_0}\s_{v_0}$.
Then $$\s_{1\times w}\cdot \s_{1\times u}=\sum_{v_0\in V_0}c_{w,u}^{v_0}\s_{1\times
v_0}+\sum_{v_1\in V_1}c_{w,u}^{v_1}\s_{v_1},$$ where $v_1(1)\neq 1$, for each $v_1\in V_1$.
\item Let $k=\ell(w)+\ell(u)$. Then for all $n\ge k$, we have
$$\mathfrak{S}_{1^{n}\times w}\cdot \mathfrak{S}_{1^{n}\times u}=\sum_{v_0\in V_0}c^{v_0}_{w,u}\s_{1^{n}\times v_0}+\sum_{v_1\in V_1}c^{v_1}_{w,u}\s_{1^{n-1}\times v_1}+\cdots+\sum_{v_k\in V_k}c^{v_k}_{w,u}\s_{1^{n-k}\times v_k},$$
where $V_i$ (possibly empty) is the set of new permutations appearing in $\s_{1^i\times w}\cdot\s_{1^i\times u}$ compared to $\s_{1^{i-1}\times w}\cdot\s_{1^{i-1}\times u}$.
Taking $n\to\infty$, we have a canonical expansion:
\begin{equation}\label{expansion}
F_w\cdot F_u=\sum_{v\in V}c^{v}_{w,u}F_v,
\end{equation}
 where $V=V_0\cup\cdots\cup V_k$.
\end{enumerate}
\end{theorem}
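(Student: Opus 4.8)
The plan is to prove part (1) first---it is the engine of the whole theorem---and then deduce part (2) by iterating it, together with a degree argument that forces the expansion to terminate. Everything rests on understanding the specialization $x_1=0$ of a Schubert polynomial, which I claim detects exactly whether a permutation fixes the value $1$. Let $\sigma$ denote the ring homomorphism $x_i\mapsto x_{i+1}$. Using the Billey--Jockusch--Stanley expansion $\s_w=\sum_{(a,b)}x_b$ over reduced words $a$ and compatible sequences $b$, I would establish two facts. First, a \emph{shift identity}: $\s_{1\times w}(0,x_2,x_3,\dots)=\sigma(\s_w)$. Setting $x_1=0$ retains precisely the compatible sequences avoiding the index $1$; since the reduced words of $1\times w$ are those of $w$ with every letter raised by $1$, these sequences biject with the compatible sequences of $w$ shifted up by one, which gives $\sigma(\s_w)$. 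Second, a \emph{vanishing property}: $\s_v(0,x_2,x_3,\dots)=0$ whenever $v(1)\neq 1$. Indeed, a compatible sequence with all entries $\geq 2$ forces every letter of its reduced word to be $\geq 2$ (because $b_i\le a_i$), so $v\in\langle s_2,s_3,\dots\rangle$ and hence $v(1)=1$; the contrapositive is exactly the claim. Equivalently, $x_1\mid \s_v$ if and only if $v(1)\neq 1$.

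With these in hand, part (1) is immediate. Apply $x_1=0$ to $\s_{1\times w}\cdot \s_{1\times u}=\sum_{v'}c^{v'}_{1\times w,\,1\times u}\s_{v'}$. On the left the specialization is multiplicative, so it becomes $\sigma(\s_w)\sigma(\s_u)=\sigma(\s_w\s_u)=\sum_{v}c^{v}_{w,u}\,\sigma(\s_v)$. On the right, every term with $v'(1)\neq 1$ dies by the vanishing property, while each term with $v'(1)=1$, that is $v'=1\times v$, contributes $\sigma(\s_v)$ by the shift identity. Since $\sigma$ is injective and $\{\s_v\}$ is linearly independent, the family $\{\sigma(\s_v)\}$ is independent, and comparing coefficients yields $c^{1\times v}_{1\times w,\,1\times u}=c^{v}_{w,u}$ for all $v$. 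This is precisely part (1): the coefficient of $\s_{1\times v}$ equals $c^{v}_{w,u}$, and every remaining term $\s_{v_1}$ satisfies $v_1(1)\neq 1$.

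For part (2), set $P_n=\s_{1^{n}\times w}\cdot\s_{1^{n}\times u}$ and let $R=\sigma^{-1}\circ(x_1\mapsto 0)$, which peels off one leading fixed point and annihilates any term with none; the computation above shows $R(P_n)=P_{n-1}$. Iterating part (1) then gives the layered expansion $P_n=\sum_{i}\sum_{v_i\in V_i}c^{v_i}_{w,u}\,\s_{1^{n-i}\times v_i}$, where $V_i$ is the set of new terms (each with first value $\neq 1$) introduced at step $i$; crucially part (1) preserves the coefficients, so the $c^{v_i}_{w,u}$ are well defined once and stay fixed at every higher level. Since $\s_v$ is homogeneous of degree $\ell(v)$, every occurring $v$ has $\ell(v)=\ell(1^n\times w)+\ell(1^n\times u)=\ell(w)+\ell(u)=k$. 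It remains to prove $V_i=\emptyset$ for $i>k$. Every ``essential'' permutation occurring (first value $\neq 1$, length $k$) has bounded support, so only finitely many occur and the $V_i$ eventually vanish; to pin the bound to $k$, I would track the position $p$ of the value $1$, noting $k=\ell(v)\ge p-1$, and show that an essential permutation can be created no later than level $p-1$. Granting this, taking $n\to\infty$ is routine: each $\s_{1^{n-i}\times v_i}\to F_{v_i}$, the sum over $V=V_0\cup\cdots\cup V_k$ is finite, and multiplication is continuous in the relevant inverse limit, so $F_w\cdot F_u=\sum_{v\in V}c^{v}_{w,u}F_v$.

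The main obstacle is the explicit termination bound $V_i=\emptyset$ for $i>k$: a soft finiteness argument shows the process stops, but upgrading this to the sharp statement that the level is controlled by the total length $\ell(w)+\ell(u)$ is the delicate point. Concretely, one must control exactly how far the value $1$ can migrate---equivalently, how many leading fixed points every term of $P_n$ is forced to retain---as successive layers of new terms are generated by the mechanism behind part (1). This is presumably where the Grassmannian special case and the ``other related stable properties'' advertised in the abstract enter, providing a cleaner handle on this migration and hence a second route to the stabilization.
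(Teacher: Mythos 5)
Your part (1) is correct, and your route---specializing $x_1=0$ and using the shift identity $\s_{1\times w}(0,x_2,\dots)=\sigma(\s_w)$ together with the vanishing $\s_v(0,x_2,\dots)=0$ for $v(1)\neq 1$, both read off from the Billey--Jockusch--Stanley formula---is cleaner and more self-contained than the paper's own argument for part (1), which runs a reverse-lex top-term expansion process and then observes divisibility of the remainder by $x_1$. The iteration of part (1) to obtain the layered form of $P_n=\s_{1^n\times w}\cdot\s_{1^n\times u}$, with coefficients frozen once created, is also fine.

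The genuine gap is in part (2), and it sits exactly where you flag the ``delicate point'': once the layered form is in place, the \emph{entire} remaining content of part (2) is the bound $V_i=\emptyset$ for $i>k$, and your proposal does not prove it --- ``I would track the position $p$ of the value $1$ \dots and show that an essential permutation can be created no later than level $p-1$'' states the needed lemma without giving an argument. Moreover, your fallback ``soft finiteness'' claim is false as stated: permutations of length $k$ with first value $\neq 1$ do \emph{not} have uniformly bounded support, e.g.\ $21\times 1^m\times 21$ has length $2$ and first value $2$ for every $m$, so there are infinitely many ``essential'' permutations of each length $\ge 2$, and mere finiteness of $\bigcup_i V_i$ cannot be deduced from length considerations; to know only finitely many essential permutations ever occur you must constrain which permutations can appear in some $P_i$, which is precisely the hard part. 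The paper closes this with Lemma \ref{good} (``good pairs''): it proves that the code $(c_1,c_2,\dots)$ of any $v$ occurring in $P_n$ has no internal zeros among its first $n$ entries --- if $c_{i(v)}$ is the first nonzero entry and $i(v)\le n$, then $c_j\neq 0$ for all $i(v)<j\le n$. The proof is a coefficient comparison: a violation produces a good-$n$ pair $(b^1,b^2)$ and a monomial $g$ with $\co(X_{b^1}\cdot g)>\co(X_{b^2}\cdot g)$ on the right-hand side (strict inequality from the top term of the offending $\s_{v_0}$, weak inequality from every other term by Lemma \ref{good}(1)), whereas by Lemma \ref{good}(4) the product on the left-hand side assigns these two monomials equal coefficients. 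Since the code entries sum to $k$, at most $k$ of them are nonzero, and ``no internal zeros'' then forces at least $n-k$ leading zeros, i.e.\ every term of $P_n$ equals $1^{n-k}\times v'$; combined with your layered form this is exactly $V_i=\emptyset$ for $i>k$. Some argument of this strength --- controlling the codes of all permutations that can occur, rather than counting essential permutations in the abstract --- is what your proposal is missing.
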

For a permutation $w\in S_n$, define the \textbf{code} $c(w)$ to be the sequence $c(w)=(c_1,c_2,\dots)$ of nonnegative integers given by $c_i=\#\{j\in [n]\mid j>i,\,w(j)<w(i)\}$. Define the length of $c(w)$ to be $i_0=\max\{i\mid c_i\neq 0\}$, denoted by $\ell(c(w))$. We call a permutation  \textbf{Grassmannian} if it has at most one descent. It is known that if $w$ is Grassmannian, then $\s_w$ is a Schur polynomial in $\ell(c(w))$ variables.
\begin{theorem}\label{full} Apply the above notations. If one of $w,u$ is Grassmannian, then we also have:
 \begin{enumerate}
 \item If $V_i=\emptyset$ for some $i$, then $V_j=\emptyset$ for all $j>i$. We call the smallest $i$ such that $V_i=\emptyset$ the \textbf{stability number} for $w,u$.
 \item The stability number is bounded by $\max\{\ell(c(w)),\ell(c(u))\}$. In particular, if $w=u$ with $w(1)\neq 1$, the stability number equals $w^{-1}(1)-1$.
 \end{enumerate}
\end{theorem}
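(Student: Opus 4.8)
Since the product $\s_w\cdot\s_u$ and the entire $1^n\times(-)$ construction are symmetric in $w$ and $u$, the plan begins by assuming that $u$ is the Grassmannian one. Write $d$ for its unique descent, so that $\ell(c(u))=d$ and, by the fact recalled in the excerpt, $\s_u=s_\lambda(x_1,\dots,x_d)$ for a fixed partition $\lambda$; since $1^n\times u$ is again Grassmannian (with descent $d+n$), this gives $\s_{1^n\times u}=s_\lambda(x_1,\dots,x_{d+n})$. Every product in question is then a Schubert-times-Schur product
\[
\s_{1^n\times w}\cdot\s_{1^n\times u}=\s_{1^n\times w}\cdot s_\lambda(x_1,\dots,x_{d+n}),
\]
and I would use a known combinatorial expansion of such a product into Schubert polynomials — say via labelled saturated chains in the $(d+n)$-Bruhat order whose labels form Littlewood–Richardson fillings of $\lambda$ (an iterated Monk/Pieri rule). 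This turns the theorem into a purely combinatorial question about how the set of such chains, read modulo the shift $v\mapsto 1\times v$, evolves as $n$ increases by one.

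Throughout I would keep in mind the diagram dictionary: $\ell(c(w))$ is the lowest nonempty row of the Rothe diagram $D(w)$, and $1\times(-)$ translates $D(w)$ one step down the main diagonal while preserving length. A new permutation $v\in V_i$ is characterised by $v(1)\neq 1$, i.e.\ by $D(v)$ having a cell in its first row; since all terms at all levels share the common length $k=\ell(w)+\ell(u)$, the question is for how many consecutive levels such a first-row cell can still be produced. This is the geometric source of a bound of the shape $\max\{\ell(c(w)),\ell(c(u))\}$: once the initial block of fixed points is longer than both diagrams are deep, the admissible chains have no remaining freedom to activate the first position and can only reproduce, after shifting, chains already available one level down.

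For part (1) I would prove the equivalent statement that there is an injection $V_i\hookrightarrow V_{i-1}$, which immediately yields $V_{i-1}=\emptyset\Rightarrow V_i=\emptyset$ and hence contiguity. On the chain model the map is natural: an element $v_i\in V_i$ corresponds to a chain from $1^i\times w$ that touches the first position (this is exactly what forces $v_i(1)\neq1$), and deleting the lowest fixed point introduced by the last shift turns it into a chain from $1^{i-1}\times w$ that still touches the first position, hence produces an element of $V_{i-1}$. The Grassmannian hypothesis is what makes this well defined and injective: because $u$ contributes the single partition $\lambda$, the admissible chains at consecutive levels differ only by the uniform relabelling induced by the shift, so ``removing one unit of shift'' is reversible on its image. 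Verifying that the image really lands in $V_{i-1}$, and not merely in a shift of $V_{i-2}$, is the delicate point.

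For part (2), the bound follows by combining the injection of part (1) with the diagram count of the second paragraph: for $i$ beyond $\max\{\ell(c(w)),\ell(c(u))\}$ the chains can no longer create a first-row cell, so $V_i=\emptyset$, and part (1) propagates this to all larger $i$, giving the stated bound on the stability number. For the self-product $w=u$ with $s:=w^{-1}(1)\neq1$, I would track the position of the value $1$ along the admissible chains: each shift pushes the window in which a new permutation can be created one step to the right, and a first-row cell can still be produced precisely as long as this window has not yet passed the entry carrying the value $1$; a direct computation shows this persists up to level $w^{-1}(1)-1$ and stops there, pinning the stability number to the claimed value. The main obstacle is part (1): contiguity is the structural heart of the theorem and is exactly the phenomenon that fails without the Grassmannian assumption, so the injection must genuinely use that $u$ is Grassmannian, and checking that it lands in $V_{i-1}$ is where I expect essentially all the work to lie; the exact count in the self-product case is then a fiddly but routine verification.
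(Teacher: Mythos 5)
Your overall frame (reduce to a Schubert-times-Schur rule and track which chains can ``touch the first position'' as $n$ grows) is reasonable, but the argument has a genuine gap at exactly the point you flag, and the specific claim you lean on is false. The injection $V_i\hookrightarrow V_{i-1}$ is never constructed, and in fact no such injection can exist in general: in the paper's own Grassmannian example $w=321$, $u=2413$ (equation (\ref{sta})), one has $V_0=\{53124,\,45123\}$, $V_1=\{263145,\,25413,\,246135,\,34512\}$, $V_2=\{236415\}$, so $|V_1|=4>2=|V_0|$ and $V_1$ cannot inject into $V_0$. What part (1) actually requires is only the propagation $V_j\neq\emptyset\Rightarrow V_{j-1}\neq\emptyset$ (any map would do, injectivity is irrelevant), but your justification of even this --- delete the lowest fixed point from a chain that touches position $1$ and claim the result is an admissible chain from $1^{i-1}\times w$ ending at a new permutation --- is precisely the step that fails: a chain producing a genuinely new permutation at level $i$ uses the extra prepended fixed point in an essential way (otherwise that permutation would already have appeared at level $i-1$), so ``removing one unit of shift'' does not yield a valid chain at the lower level, and you offer no mechanism to repair it. Since your part (2) and the self-product computation are declared to follow from part (1) plus unproven assertions (``the chains can no longer create a first-row cell,'' ``a direct computation shows''), the proposal does not constitute a proof.

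For comparison, the paper avoids any level-to-level map. It works with the maximal-transition (MT) tree of Kohnert and Lascoux--Sch\"utzenberger: the permutations $v\in V_i$ for $i\ge 1$ are exactly the new good leaves, which satisfy $v^{-1}(1)=m+1+i$, and Lemma \ref{gap} shows that the set $\{v^{-1}(1)\mid v\in V\}$ is an interval with no gaps, because within a single MT-move the possible numbers of boxes that can be pushed into the first column, as the parameter $j\in J(w)$ varies, themselves form an interval. Contiguity of the nonempty $V_i$'s is then immediate, and the bound in part (2) comes from Lemma \ref{reduced} (for reduced leaves the length of the code equals the length of the first column), with the self-product case pinned down by $m=w^{-1}(1)-1$. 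If you want to salvage your chain-based approach, the statement to aim for is this interval property of the positions of $1$ --- equivalently, bare nonemptiness propagation --- rather than an injection, which the cardinalities above show is quantitatively impossible.
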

\begin{conjecture}\label{conj}Theorem \ref{full} is true for general $w,u$.
\end{conjecture}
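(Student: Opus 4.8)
The plan is to reduce both parts of Theorem \ref{full} to a single operation and to analyze that operation directly, without the Grassmannian hypothesis. For a pair of permutations $(p,q)$, let $N(p,q)$ denote the set of permutations $v$ with $v(1)\neq1$ occurring in $\s_{1\times p}\cdot\s_{1\times q}$; by part (1) of Theorem \ref{main} these are exactly the ``new'' terms, i.e.\ those not obtained by applying the stabilization $\s_{v_0}\mapsto\s_{1\times v_0}$ to $\s_p\cdot\s_q$. Iterating part (1) of Theorem \ref{main} along the pairs $(1^{i-1}\times w,\,1^{i-1}\times u)$ identifies the sets of the main theorem as $V_i=N(1^{i-1}\times w,\,1^{i-1}\times u)$. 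In these terms, part (1) of Theorem \ref{full} (monotonicity) is the implication $N(p,q)=\emptyset\Rightarrow N(1\times p,1\times q)=\emptyset$ along this chain, and part (2) is the assertion that $N(1^{i-1}\times w,1^{i-1}\times u)=\emptyset$ as soon as $i>\max\{\ell(c(w)),\ell(c(u))\}$.

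Both statements should follow from a single principle about \emph{reachability of position $1$}. Since $1^{m}\times w$ and $1^{m}\times u$ both fix the values $1,\dots,m$ in the first $m$ positions, a term $\s_{v}$ with $v(1)\neq1$ can occur in $\s_{1^{m}\times w}\cdot\s_{1^{m}\times u}$ only if the multiplication moves something across the leading block of $m$ fixed points down to position $1$. The active part of $1^{m}\times w$ sits in positions $m+1,\dots,m+\ell(c(w))$, and the total number of transposition moves available is bounded by $k=\ell(w)+\ell(u)$; hence, as $m$ grows, the front recedes from the active region and position $1$ becomes monotonically harder to reach, and impossible once the leading block is longer than both code lengths. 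This is exactly the mechanism in the toy example, where the extra term is produced precisely when $w^{-1}(1)>m$, and it is compatible with the exact value $w^{-1}(1)-1$ in the case $w=u$.

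To make this rigorous I would run the Lascoux--Schu\"{u}tzenberger transition recursion on $\s_{1^{m}\times w}\cdot\s_{1^{m}\times u}$, which generates the occurring permutations $v$ through transposition moves, and track the leftmost position touched at each step. The goal is a position bound showing that no sequence of moves of total length $k$ starting from the active region at positions $>m$ can reach position $1$ once $m>\max\{\ell(c(w)),\ell(c(u))\}$, which gives part (2), together with monotonicity of this reach in $m$ (adding a leading fixed point only pushes the active region rightward), which gives part (1). The exact value $w^{-1}(1)-1$ in the equal case serves as a consistency check, since placing the value $1$ at position $s=w^{-1}(1)$ forces positions $1,\dots,s-1$ to be left ends of inversions, so that $\ell(c(w))\ge s-1$ automatically. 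An alternative for part (1) is a direct transfer (deletion) map inverting stabilization that sends a new permutation at stage $m$ to one at stage $m-1$; I expect the transition/position route to be cleaner because it simultaneously yields the bound.

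The principal obstacle is that there is no closed combinatorial description of $N(p,q)$---equivalently of the structure constants $c^{v}_{p,q}$ for general $p,q$---since that is precisely the open Littlewood--Richardson-type problem for Schubert polynomials. In the one-Grassmannian case this is avoided because one factor is a Schur polynomial and the product is controlled by Pieri/Littlewood--Richardson combinatorics with an explicit tableau model, from which the position information is immediate. For general $w,u$ the difficulty is to extract \emph{only} the support and position-monotonicity data about $c^{v}_{p,q}$ needed here, without solving the product rule; the delicate point is proving that the leftmost position reached by the transition recursion is genuinely monotone in the number of prepended $1$'s and bounded by the code length, and it is here that an idea beyond the Grassmannian argument will be required.
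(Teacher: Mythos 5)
There is no proof to compare against: the statement you were asked to prove is Conjecture \ref{conj}, which the paper explicitly leaves open. It states that the strong stability properties of Theorem \ref{full}, proved in the paper only when one of $w,u$ is Grassmannian, hold for general $w,u$. Your submission is accordingly a research plan rather than a proof, and you concede as much in your final paragraph: the ``position bound'' and the monotonicity of the leftmost reachable position in the number of prepended $1$'s --- which together constitute both parts of the conjecture --- are stated as goals, not established. Reformulating $V_i$ as $N(1^{i-1}\times w,\,1^{i-1}\times u)$ via iterating part (1) of Theorem \ref{main} is correct but is only bookkeeping; it does not advance either implication $N(p,q)=\emptyset\Rightarrow N(1\times p,1\times q)=\emptyset$ or the bound by $\max\{\ell(c(w)),\ell(c(u))\}$.

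There is also a concrete technical obstruction to the route you propose. The transition (MT) recursion computes the Schubert expansion of a \emph{single} Schubert polynomial; the paper can run it on the product $\s_w\cdot\s_u$ only because of Kohnert's identity $\s_w\cdot\s_u=\s_{w\times u}\downarrow A_m$ (Theorem 3.1 of the paper), which requires $u$ to be Grassmannian. For general $u$ there is no single permutation whose restricted transition tree computes the product, so ``running the Lascoux--Sch\"utzenberger transition recursion on $\s_{1^m\times w}\cdot\s_{1^m\times u}$'' presupposes the very expansion you are trying to control --- the circularity is exactly why the paper's Section 3 argument does not extend, and why the statement is a conjecture. Note too that the paper's proof of part (1) of Theorem \ref{full} rests on Lemma \ref{gap} (the positions of $1$ among the $v\in V$ form an interval with no gaps), which is itself proved through the diagram interpretation of MT-moves; your plan would need a general-$u$ substitute for this no-gap phenomenon as well, and nothing in the proposal supplies one. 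The heuristic that ``the front recedes from the active region'' is plausible and consistent with the examples, but as written it is not a proof of either part.
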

In Section 2, we prove Theorem \ref{main} using the combinatorial definition of Schubert polynomials given in \cite{comb}. In Section 3, we study the case when one of the permutation is Grassmannian. We prove Theorem \ref{main} and \ref{full} by an algorithm described in \cite{koh} using maximal transitions (\ref{max}).

For the case when both $w,u$ are Grassmannian, $\s_{1^n\times w}\cdot \s_{1^n\times u}$ is the product of two Schur polynomials for all $n$, so (\ref{expansion}) is described by the usual Littlewood-Richardson rule. When both $w$ and $u$ are 321-avoiding, by  \cite {comb}, (\ref{expansion}) gives an expansion of the product of two skew Schur functions. Compare this with the skew Littlewood-Richardson rule studied in \cite{assaf} and \cite{lam}, where they give a nice formula for the coefficients (with signs) in the expansion of two skew Schur functions into skew Schur functions. Here, we get all positive coefficients, but not all permutations appearing in the expansion are $321$-avoiding.


In Section 4, we generalize this stability to the product of double Schubert polynomials. We also give the definition of the weak and strong stable expansions,  and prove some other stable properties, which provide a second proof of Theorem \ref{main}.
\section{Proof of  Theorem \ref{main}}
Let us recall the combinatorial definition of Schubert polynomials introduced in Theorem 1,1~\cite{comb}. Let $p=\ell(w)$ be the length of $w$, and $R(w)$ be the set of all the reduced words of $w$. For $a=(a_1,\dots,a_p)$, let $K(a)$ be the set of all $a$-compatible sequences, i.e., $(i_1,\dots,i_p)$ such that: 1) $i_1\le\cdots\le i_p$; 2) $i_j\le a_j$, for $j=1,\dots,p$; and 3) $i_j<i_{j+1}$, if $a_j<a_{j+1}$. Then we have
\begin{equation}\label{combdef}
\s_w=\sum_{a\in R(w)}\sum_{(i_1,\dots,i_p)\in K(a)}x_{i_1}\cdots x_{i_p}.
\end{equation}

\begin{definition}\label{goodpair} For two integer vectors $b^1=(b^1_1,\dots,b^1_p)$ and $b^2=(b^2_1,\dots,b^2_p)$, consider the following conditions:
\begin{enumerate}
\item $b^1$ and $b^2$ are weakly increasing. Namely, $b^1_1\le \dots\le b^1_p$ and $b^2_1\le \dots\le b^2_p$.
\item $b^1$ is smaller than $b^2$, denoted by $b^1<b^2$, which means $b^1_i\le b^2_i$ for each $i=1,\dots,p$;
\item $b^1$ is similar with $b^2$, denoted by $b^1\sim b^2$, which means $b^1$ and $b^2$ increase at the same time, i.e., $b^1_i<b^1_{i+1}$ if and only if $b^2_i<b^2_{i+1}$;
\item $b^1$ and $b^2$ are bounded by $n$, i.e., $b^1_i\le n$ and  $b^2_i\le n$, for all $i=1,\dots,p$.
\end{enumerate}
We call $(b^1,b^2)$ a \textbf{good pair} if it satisfies the first three conditions, call it a \textbf{good-$n$ pair}, if all four conditions are satisfied.
\end{definition}
For example, $(b^1,b^2)$, with $b^1=(2,4,4,5)$ and $b^2=(2,6,6,8)$, is a good-$8$ pair. Denote $X_b=x_{b_1}x_{b_2}\cdots x_{b_p}$. For example, $X_{b^1}=x_2x_4^2x_5$, for the previous $b^1$. We use $\co(X_b)$ to denote the coefficient of $X_b$.
\begin{lemma}\label{good}
\begin{enumerate}
\item In $\s_w$, $\co(X_{b^1})\ge\co(X_{b^2})$, for any good pair $(b^1,b^2)$.
\item In $\s_{1^n\times u}$,  $\co(X_{b^1})=\co(X_{b^2})$, for any good-$n$ pair $(b^1,b^2)$.
\item In $\s_{1^n\times u}$,  $\co(X_{b^1}\cdot g)=\co(X_{b^2}\cdot g)$, for any good-$n$ pair $(b^1,b^2)$ and any monomial $g$ with variable indices larger than $n$.
\item In $\s_{1^n\times w}\cdot \s_{1^n\times u}$, $\co(X_{b^1}\cdot g)=\co(X_{b^2}\cdot g)$, for any good-$n$ pair $(b^1,b^2)$ any monomial $g$ with indices larger than $n$.
\end{enumerate}
\end{lemma}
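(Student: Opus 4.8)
The plan is to dispatch part (1) by a single set inclusion in the combinatorial model, and then to obtain the three equalities (2)--(4) from one structural fact about $1^n\times v$ together with the observation that the two monomials of a good pair carry the same multiset of exponents. Throughout I would use that, because $b$ is weakly increasing, the monomial $X_b$ pins down the sequence $b$ uniquely, so by (\ref{combdef}) we have $\co(X_b)=\#\{a\in R(w):b\in K(a)\}$.

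For part (1) I claim the \emph{identity} map is the desired injection; in fact $\{a:b^2\in K(a)\}\subseteq\{a:b^1\in K(a)\}$. Suppose $b^2\in K(a)$. Then $b^1$ is weakly increasing by condition (1); we have $b^1_j\le b^2_j\le a_j$ by condition (2); and whenever $a_j<a_{j+1}$ the membership $b^2\in K(a)$ forces $b^2_j<b^2_{j+1}$, hence $b^1_j<b^1_{j+1}$ by the similarity condition (3). Thus $b^1\in K(a)$, and summing over all $a\in R(w)$ gives $\co(X_{b^1})\ge\co(X_{b^2})$.

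For the equalities I would isolate two facts. First, if $(b^1,b^2)$ is a good pair then $X_{b^1}$ and $X_{b^2}$ have the same multiset of exponents: the strict ascents of a weakly increasing vector are exactly its block boundaries, so condition (3) forces $b^1$ and $b^2$ to have identical block sizes, i.e.\ identical exponent multisets. Second, for any $v$ the permutation $1^n\times v$ fixes $1,\dots,n$ and satisfies $(1^n\times v)(i)<(1^n\times v)(i+1)$ for $i=1,\dots,n$ (at $i=n$ this is $n< v(1)+n$); since $\s_V$ is symmetric in $x_i,x_{i+1}$ precisely when $V$ has no descent at $i$ (equivalently $\partial_i\s_V=0$), the polynomial $\s_{1^n\times v}$ is symmetric in $x_1,\dots,x_{n+1}$. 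Granting these, part (2) is immediate: in a good-$n$ pair all entries are $\le n$, so $X_{b^1}$ and $X_{b^2}$ involve only the symmetric variables $x_1,\dots,x_n$ and share one exponent multiset, hence have equal coefficients in $\s_{1^n\times u}$. For part (3) I would group by the high part: write $\s_{1^n\times u}=\sum_g P_g\,g$ where $g$ runs over monomials in the variables of index $>n$ and each $P_g$ lies in $x_1,\dots,x_n$. Permuting $x_1,\dots,x_n$ fixes every $g$, so symmetry of the whole forces each $P_g$ to be symmetric in $x_1,\dots,x_n$; since $\co(X_{b^i}g)$ is the coefficient of $X_{b^i}$ in $P_g$, part (2) applied to $P_g$ finishes it. Part (4) is the same argument for the product: $\s_{1^n\times w}\cdot\s_{1^n\times u}$ is again symmetric in $x_1,\dots,x_{n+1}$, being a product of two such symmetric polynomials, so grouping by $g$ and invoking the multiset fact once more yields $\co(X_{b^1}g)=\co(X_{b^2}g)$.

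I expect the only delicate point to be the bookkeeping in (3) and (4): one must verify that factoring out the high part $g$ genuinely leaves a polynomial symmetric in $x_1,\dots,x_n$, and that a good-$n$ pair times a common $g$ yields two monomials differing only by a permutation of $x_1,\dots,x_n$. It is worth recording a purely combinatorial alternative for (2)--(3) that explains why the inequality of (1) becomes an equality under the good-$n$ hypothesis: every reduced word $a$ of $1^n\times v$ uses only letters $>n$, so for any entry $i_j\le n$ the bound condition $i_j\le a_j$ is automatic; hence on the entries coming from $b^1$ the membership depends only on the ascent set of $b^1$, which by similarity equals that of $b^2$, and the two index sets $\{a:b^1\in K(a)\}$ and $\{a:b^2\in K(a)\}$ coincide exactly. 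The vacuity of the bound condition is thus the real mechanism behind the upgrade from $\ge$ to $=$.
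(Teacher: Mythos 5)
Your proof is correct, and for parts 2--4 it takes a genuinely different route from the paper's. Part 1 coincides with what the paper leaves implicit in its one-line remark that parts 1--3 ``follow from the combinatorial definition'': since a weakly increasing $b$ is determined by $X_b$, one has $\co(X_b)=\#\{a\in R(w)\mid b\in K(a)\}$, and your inclusion $\{a\mid b^2\in K(a)\}\subseteq\{a\mid b^1\in K(a)\}$ is exactly the needed verification. For parts 2--3 the paper again gives no details, and its intended mechanism is the one you record only as an ``alternative'': reduced words of $1^n\times u$ use letters $>n$ only, so on good-$n$ pairs the bound $i_j\le a_j$ is vacuous and membership in $K(a)$ depends only on the common ascent set. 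Your primary argument instead imports the divided-difference fact that $\s_V$ is symmetric in $x_i,x_{i+1}$ exactly when $V(i)<V(i+1)$, so $\s_{1^n\times v}$ is invariant under all permutations of $x_1,\dots,x_{n+1}$; combined with the observation that similar weakly increasing vectors have the same exponent multiset, all three equalities then flow from a single principle, with the grouping $\s_{1^n\times u}=\sum_g P_g\, g$ transferring invariance to each slice $P_g$. The divergence matters most at part 4: the paper proves it by factoring each occurrence of $X_{b^1}\cdot g$ into a monomial from $\s_{1^n\times w}$ times one from $\s_{1^n\times u}$, using $b^1\sim b^2$ to construct the ``corresponding'' factorization of $X_{b^2}\cdot g$, checking that the two pairs of factors are again good-$n$ pairs, and applying part 3 twice; you simply note that a product of polynomials invariant in $x_1,\dots,x_{n+1}$ is again invariant, and rerun the same grouping argument. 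The paper's route stays entirely inside the combinatorial model of (\ref{combdef}), at the cost of the factorization bookkeeping (the step it treats most informally); your route borrows one standard fact from divided-difference theory and in exchange makes parts 2--4 uniform and renders part 4 immediate, while also explaining conceptually why the inequality of part 1 upgrades to equality for good-$n$ pairs: the two monomials lie in a single $S_n$-orbit of an invariant polynomial.
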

\begin{proof}Parts 1-3 follow from the combinatorial definition (\ref{combdef}) of Schubert polynomials and Definition \ref{goodpair}. Now we will prove part 4. In fact, any $X_{b^1}\cdot g$ it is the product of two monomials, one from $\s_{1^n\times w}$ and one from $ \s_{1^n\times u}$, let us assume $X_{b^1}=X_{b^{11}}\cdot X_{b^{12}}$, and the corresponding decomposition for $X_{b^2}$ is $X_{b^2}=X_{b^{21}}\cdot X_{b^{22}}$. For example, consider the previous good-8 pair $(b^1,b^2)$. If $X_{b^1}=x_2x_4^2x_5=(x_2x_4)(x_4x_5)$ with $b^{11}=(2,4)$ and $b^{12}=(4,5)$, then we decompose $X_{b^2}=x_2x_6^2x_8$ as $(x_2x_6)(x_6x_8)$ with $b^{21}=(2,6)$ and $b^{22}=(6,8)$. Since $b^1\sim b^2$, we have $b^{11}\sim b^{21}$ and $b^{12}\sim b^{22}$. Applying part 3 to both pairs, we have $\co(X_{b^1}\cdot g)=\co(X_{b^2}\cdot g)$.
\end{proof}

Write the code of $w$ as $c(w)=(c_1,c_2,\dots,c_p)$ and $X^{c(w)}=x_1^{c_1}x_2^{c_2}\cdots x_p^{c_p}$. Let $b(c)$ be the weakly increasing sequence such that $X_{b(c)}=X^c$. We use reverse lex-order in this section. It is known that the top degree term of $\s_w$ is $X^{c(w)}$, i.e.,
\begin{equation}\label{top}\s_w=X^{c(w)}+\sum_{b}X_b,
\end{equation}
where each $b$ satisfies $b<b(c(w))$ termwisely, as defined in part 2 of Definition \ref{goodpair}. Now we consider the process of getting the expansion of $\s_w\cdot\s_u$. By (\ref{combdef}), the top degree term is $X^{c(w)+c(u)}$. Let $v_1$ be the permutation such that $c(v_1)=c(w)+c(u)$. Then $$\s_w\cdot\s_u=\s_{v_1}+\cdots,$$
so $c_{wu}^{v_1}=1$. Then consider the top degree term in $\s_w\cdot\s_u-\s_{v_1}$. Let it be $c_2X^{c(v_2)}$ for some $v_2$. Then $$\s_w\cdot\s_u-\s_{v_1}=c_2\s_{v_2}+\cdots.$$
Next, consider the top degree term in $\s_w\cdot\s_u-\s_{v_1}-c_2\s_{v_2}$, etc. Since there are finitely many monomials in $\s_w\cdot\s_u$, this process terminates, and we get an expansion $\s_w\cdot\s_u=\sum_{v\in V_0}c_{wu}^v\s_v$.

\begin{proof}[of  Theorem \ref{main}]
\begin{enumerate}
\item By the combinatorial definition of Schubert polynomial (\ref{combdef}) and the above process of expanding $\s_w\cdot\s_u$, we have $c_{1\times w,1\times u}^{1\times v}=c_{w,u}^v$ for all $v\in V_0$. Further more, each term in $$\s_{1\times w}\cdot\s_{1\times u}-\sum_{v_0\in V_0}c_{w,u}^{v_0}\s_{1\times v_0}$$ is divided by $x_1$. So any $\s_v$ with $c(v)=(c_1,c_2,\dots)$ appear in the above difference has $c_1\neq 0$, which is equivalent to $v(1)\neq 0$. This proves part one.
\item
For a fixed $n$, suppose  $$\s_{1^n\times w}\cdot \s_{1^n\times u}=\sum_{v\in V}c^v_{wu}\s_v.$$
We claim that the code $c(v)=(c_1,c_2,\dots,c_p)$ for $v\in V$ has to satisfy the following property: let $c(v)_n=(c_1,c_2,\dots,c_n)$ be the first  $n$ elements in $c(v)$. Let $i(v)$ be the smallest number such that $c_i\neq 0$. Then the claim is that if $i(v)\le n$, then for all $i(v)<j\le n$, we have $c_j\neq 0$. Suppose we have proved this claim. Then since $c_1+\cdots+c_n\le k$, where $k=\ell(w)+\ell(u)$, for each $v\in V$, we have $i(v)\ge n-k$. In other words, the code $c(v)$  starts with at least $n-k$ zeros, and thus $v$ starts with $12\cdots (n-k)$, which will finish the proof. Now let us prove the claim.

In fact, suppose we have some $v_0\in V$ which does not satisfy the claim. Namely there exists some $j$ such that $i(v)<j\le n$ and $c_j=0$. Let $c'=(0,c_1,c_2,\dots,c_{j-1},c_{j+1},\dots,c_n)$. Consider the pair $b^1=b(c(v)_n)$ and $b^2=b(c')$, i.e., $X_{b^1}=X^{c(v)_n}$ and $X_{b^2}=X^{c'}$. For example, let $n=7$, and  $c(v_0)_n=(0,0,0,2,3,0,2)$. Then $X_{b^1}=X_4^2X_5^3x_7^2$, $c'=(0,0,0,0,2,3,2)$ and $X_{b^2}=X_5^2X_6^3x_7^2$. Then $(b^1, b^2)$ is a good $n$-pair.

Now let $g=X^{(c_{n+1},\dots,c_p)}$. Notice that $X_{b^1}\cdot g$ is the top degree term in $\s_{v_0}$ by (\ref{top}). Since $b^2>b^1$, $\co(X_{b^2}\cdot g)=0$ in $\s_{v_0}$. Therefore, $\co(X_{b^1}\cdot g)>\co(X_{b^2}\cdot g)$ in $\s_{v_0}$. By Lemma \ref{good}, on the right hand side, for each $v\in V$, we have $\co(X_{b^1}\cdot g)\ge\co(X_{b^2}\cdot g)$, therefore, on the right hand side, we have $\co(X_{b^1}\cdot g)>\co(X_{b^2}\cdot g)$. However,  on the left hand side, we must have $\co(X_{b^1}\cdot g)=\co(X_{b^2}\cdot g)$, a contradiction.\end{enumerate}\end{proof}

\section{Schubert polynomial times a Schur polynomial}

In this section we will prove Theorem \ref{main} and \ref{full} for the case when one of the permutation $w,u$ is Grassmannian. We will apply an algorithm for multiplying a Schubert polynomial by a Schur polynomial based on the following result. This result was originally proved using Kohnert's algorithm, which unfortunately, has not been completely proved yet. However, using the very similar algorithm called  ladder and chute moves studied in \cite{bb}, we can still show that the following theorem is true.
\begin{theorem}[Theorem 3.1 in {\cite{koh}}]
 Let $\s_u$ be a Schur polynomial with $m$ variables, i.e., $u$ is a Grassmannian permutation with $\ell(c(u))=m$. Let $\s_w$ be a Schubert polynomial with $m$ variables, i.e., $\ell(c(w))=m$. Then $$\s_w\cdot \s_u=\s_{w\times u}\downarrow A_m,$$
where $f\downarrow A_m=f(x_1,\dots,x_{m},0,\dots,0)$.
\end{theorem}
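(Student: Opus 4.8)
The plan is to prove the identity by passing to the combinatorial model of Schubert polynomials as sums over reduced pipe dreams (RC-graphs), writing $\s_v=\sum_D x^D$ with $D$ ranging over the pipe dreams of $v$ and $x^D=\prod_i x_i^{r_i(D)}$, where $r_i(D)$ is the number of crosses in row $i$. By the theorem of Bergeron--Billey \cite{bb}, all pipe dreams of a fixed permutation are connected by chute and ladder moves and are generated from the bottom pipe dream, whose crosses occupy exactly the Rothe diagram $D(v)$; this is the statement I would use in place of the (still open) Kohnert algorithm. Since $f\downarrow A_m$ merely sets $x_{m+1}=x_{m+2}=\cdots=0$, the left-hand side $\s_{w\times u}\downarrow A_m$ is the sum of $x^D$ over those pipe dreams $D$ of $w\times u$ all of whose crosses lie in rows $1,\dots,m$. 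The whole proof then reduces to exhibiting a weight-preserving bijection between this set of pipe dreams and the set of pairs consisting of a pipe dream of $w$ together with a semistandard tableau contributing to $\s_u$.

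The two structural inputs are as follows. First, I would compute the Rothe diagram of $w\times u$ and show that it splits as a disjoint union $D(w\times u)=D(w)\sqcup\bigl(D(u)+(m,m)\bigr)$: because $w\in S_m$ sends $\{1,\dots,m\}$ into $\{1,\dots,m\}$ while the shifted block sends $\{m+1,\dots\}$ into $\{m+1,\dots\}$, no position $i\le m$ is ever in inversion with a position $j>m$. Equivalently, $w$ and the shifted copy of $u$ act on disjoint intervals using generators $s_1,\dots,s_{m-1}$ and $s_{m+1},\dots$ whose indices differ by at least two and hence commute; so $\ell(w\times u)=\ell(w)+\ell(u)$ and every reduced word of $w\times u$ is a shuffle of a reduced word of $w$ with a reduced word of the shifted $u$. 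Consequently the bottom pipe dream of $w\times u$ is the superposition of the bottom pipe dream of $w$ in the top-left $m\times m$ block and a copy of the bottom pipe dream of $u$ pushed into rows and columns $>m$.

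Second, I would split any pipe dream $D$ of $w\times u$ supported in rows $\le m$ along the vertical line between columns $m$ and $m+1$. The crosses in columns $\le m$ come only from crossings of the ``small'' pipes and form a pipe dream of $w$; the crosses in columns $>m$ come only from crossings of the ``large'' pipes, and since they are confined to rows $\le m$ they constitute a Grassmannian pipe dream with descent row $m$, which under the standard correspondence is a semistandard Young tableau of shape $\lambda$ (the partition read off from $c(u)$) with entries in $\{1,\dots,m\}$, i.e.\ exactly a monomial of $s_\lambda(x_1,\dots,x_m)=\s_u$. Because $x^D$ records only per-row cross counts, it factors as the product of the weight of the column-$\le m$ part and that of the column-$>m$ part, so summing over all such $D$ yields $\s_w\cdot\s_u$. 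The hypotheses $\ell(c(w))=\ell(c(u))=m$ are precisely what guarantee that the $w$-part already lives in rows $\le m$ (so that $\downarrow A_m$ does not truncate $\s_w$) and that the Grassmannian descent sits exactly at row $m$.

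The main obstacle is proving that this column-split is genuinely a bijection and not merely a weight-preserving surjection in one direction. Concretely, I must show (i) that the small and large pipe families never cross each other, so that the two pieces may be chosen independently and any pipe dream of $w$ glued to any admissible large-pipe configuration reassembles into a reduced pipe dream of $w\times u$ supported in rows $\le m$; and (ii) that as the large-pipe crosses are raised into rows $1,\dots,m$ by ladder moves, they sweep out precisely the Grassmannian pipe dreams enumerating $s_\lambda$ in $m$ variables, with no spillover into columns $\le m$ and no reduced pipe dream omitted. Establishing (ii) rigorously --- that the ladder-move orbit of the shifted-$u$ block, intersected with the rows-$\le m$ condition, is in weight-preserving bijection with the semistandard tableaux of shape $\lambda$ in $m$ letters --- is the technical heart, and it is exactly here that the Grassmannian hypothesis on $u$ and the Bergeron--Billey connectivity of RC-graphs \cite{bb} are indispensable.
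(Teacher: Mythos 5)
First, a point of reference: the paper never proves this statement---it is imported verbatim as Theorem 3.1 of \cite{koh}, with only the remark that the ladder/chute-move results of \cite{bb} can stand in for Kohnert's unproven algorithm---so you are attempting the repair that the paper merely gestures at, and pipe dreams are a reasonable vehicle for it. The trouble is that your write-up is a plan rather than a proof: the step you defer, your item (ii), is not a routine verification but is the entire theorem, and the reason you offer for why the Grassmannian hypothesis should make it work (that it puts the descent of $u$ ``exactly at row $m$,'' so $\s_u$ involves only $x_1,\dots,x_m$) cannot be the right mechanism, because that property holds for \emph{every} $u$ with $\ell(c(u))=m$, Grassmannian or not, while the identity is false for such $u$. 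Concretely, take $w=132$, $u=321$, $m=2$, so $\ell(c(w))=\ell(c(u))=2$ and $\s_u=x_1^2x_2$; then $\s_w\cdot\s_u=x_1^3x_2+x_1^2x_2^2$, but the reduced pipe dream of $w\times u=132654$ with crosses at $(2,1),(2,3),(2,4),(1,4)$ lies entirely in rows $\le 2$ and contributes $x_1x_2^3$ to $\s_{w\times u}\downarrow A_2$. In that pipe dream the cross at $(2,3)$ is a crossing of two ``large'' pipes sitting inside the column range you reserve for the $w$-part, so your column-split is not even well defined. What Grassmannian-ness actually buys is a column confinement: since no crosses lie below row $m$, pipe $p+j$ enters row $m$ at column $p+j-m$ and its column is weakly increasing, so a crossing of large pipes $p+i,p+j$ ($i<j$) must occupy a column $\ge p+j-m$; only because every inversion $(i,j)$ of a Grassmannian $u$ with descent at $m$ has $j>m$ are all such crossings forced into columns $>p$, where $p$ is the size of the window carrying $w$. (Your item (i), by contrast, is immediate: small and large pipes never cross because mixed pairs are never inversions of $w\times u$ and reduced pipe dreams cross two pipes at most once.) This confinement lemma is the missing heart; with it the split and glue maps can be checked to be mutually inverse by tracing trajectories, and without it your outline collapses exactly as in the example above.

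Two further slips would need fixing even after the key lemma is supplied. The hypothesis $\ell(c(w))=m$ says the last descent of $w$ is at position $m$, which forces $w\notin S_m$ (a permutation of $\{1,\dots,m\}$ has code of length at most $m-1$); so your justification ``because $w\in S_m$ sends $\{1,\dots,m\}$ into $\{1,\dots,m\}$'' contradicts the hypothesis, and the correct splitting is $D(w\times u)=D(w)\sqcup\bigl(D(u)+(p,p)\bigr)$ with the vertical cut between columns $p$ and $p+1$, not between $m$ and $m+1$. Also, the bottom RC-graph of \cite{bb} is the left-justified diagram of the code, not the Rothe diagram---for $w=132$ the Rothe diagram is $\{(2,2)\}$, which is not even an RC-graph, while the bottom RC-graph is $\{(2,1)\}$---and the bottom RC-graph of $w\times u$ has its $u$-part shifted down by $p$ rows but \emph{not} right by $p$ columns. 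Conflating these two objects matters, since your gluing argument rests on knowing exactly where crosses can sit.
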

  The algorithm we will apply for multiplying a Schubert polynomial by a Schur polynomial was studied in \cite{koh} and is a modification of the algorithm by Lascoux and Sch\"{u}tzenberger \cite{LS} for decomposing the product of two Schur functions into a sum of Schur functions.
\subsection{Maximal transition tree}
Recall that $wt_{rs}$ is the permutation obtained from $w$ by switching $w(r)$ and $w(s)$. Let $r$ be the largest descent of the permutation $w$, and $s$ be the largest integer such that $w(s)<w(r)$. The following formula follows from Monk's rule \cite{monk}
 \begin{equation}\label{max}\s_{w}=x_r\s_{u}+\sum_{v\in S(w)}\s_{v},
  \end{equation}where $u=wt_{rs}$ and $S(w)$ is the set of permutations of the form $wt_{rs}t_{jr}$ with $j<r$ such that $\ell(wt_{rs}t_{jr})=\ell(w)$. So each $v\in S(w)$ corresponds to a different $j\in J(w)$. We call (\ref{max}) a \textbf{maximal transition} (MT for short) (see \cite{LS}). For example, for $w=321654$, we have $r(w)=5$, $s(w)=6$, $J(w)=\{1,2,3\}$ and $S(w)=\{421635,341625,324615\}$. We call each $v\in S(w)$ a descendent of $w$.

 Notice that $c_i=0$, for all $i>r(w)$ in the code $c(w)=(c_1,c_2,\dots)$, and $\s_w$ is a polynomial with $r(w)$ variables. So if $r(w)\le m$, then $\s_w=\s_w\downarrow A_m$. If $r(w)>m$, we have $\s_{w}\downarrow A_m=\sum_{v\in S(w)}\s_{v}\downarrow A_m$ by (\ref{max}), since we set $x_r=0$. Notice that for each permutation $v\in S(w)$, $r(v)<r(w)$. We call a permutation $v$ \textbf{bad} if $v^{-1}(1)>m+1$. If $v$ is bad, then $x_{m+1}$ divides each monomial of $\s_v$, so $\s_{v}\downarrow A_m=0$.

   Apply MT successively to $w\times u$, each $v\in S(w\times u)$ and their descendants as long as the permutation is not bad, until their largest descents are smaller than $m$. This way we get a finite tree with two types of leaves:  1) a permutation with largest descent $\le m$, we call it a \textbf{good leaf}; and 2) a bad permutation as defined above. Then $\s_{w\times u}\downarrow A_m$ is obtained by summing up all of the good leaves. We call this tree the MT-tree rooted at $w\times u$; we call the edge between a permutation $w$ and one of its descendant $v\in S(w)$ an MT-move.
  \begin{example}\label{tree}Here is an example of the MT tree rooted at $w\times u$, for  $w=321$, $u=2413$ and $m=2$ (see Figure \ref{figtree}). The leaves we cross out are the bad leaves, i.e., permutations with 1 in position larger than $m+1=3$. The remaining leaves are good leaves, i.e., they have largest descent $\le m=2$.
  \begin{figure}
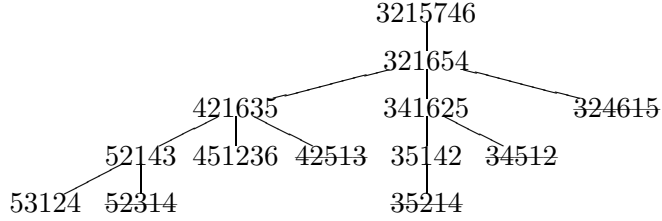

$$\xy 0;/r.30pc/:
(0,0)*{3215746}="a";
(0,-5)*{321654}="b";
(-20,-10)*{421635}="c";
(0,-10)*{341625}="d";
(20,-10)*{\rlap{---------}{324615}}="e";
(-30,-15)*{52143}="f";
(-20,-15)*{451236}="g";
(-10,-15)*{\rlap{--------}{42513}}="h";
(0,-15)*{35142}="i";
(10,-15)*{\rlap{--------}{34512}}="j";
(-40,-20)*{53124}="k";
(-30,-20)*{\rlap{--------}{52314}}="l";
(0,-20)*{\rlap{--------}{35214}}="m";
 "a"; "b"**\dir{-};
  "b"; "c"**\dir{-};
   "b"; "d"**\dir{-};
    "b"; "e"**\dir{-};
     "c"; "f"**\dir{-};
      "c"; "g"**\dir{-};
       "c"; "h"**\dir{-};
        "f"; "k"**\dir{-};
         "f"; "l"**\dir{-};
          "d"; "i"**\dir{-};
           "d"; "j"**\dir{-};
            "i"; "m"**\dir{-};
           \endxy$$
           \caption{MT-tree rooted at $321\times 2413$ for Example \ref{tree}}
           \label{figtree}
           \end{figure}
So summing up all the good leaves, we have $\s_{321}\cdot\s_{2413}=\s_{321\times 2413}\downarrow A_2=\s_{53124}+\s_{45123}$.
  \end{example}
\begin{remark}Notice that in Figure \ref{figtree}, the descendants of $341625$ are bad leaves ($35214$ and $34512$). It will be nice if one could simplify the tree so that we can remove $341625$ without applying further moves. However, it seems that such a rule, if exists, will be related with some pattern avoidances, which is hard to describe in general.
\end{remark}

Now we want to study the difference between the MT-tree rooted at $1\times w\times 1\times u$ and the one rooted at $w\times u$.
\begin{example}\label{tree2}Continue Example \ref{tree}. We study $\s_{1\times 321}\cdot\s_{1\times2413}$ (see Figure \ref{figtree2}). Notice that now $m=3$ instead of $2$ in Example \ref{tree}.
\begin{figure}
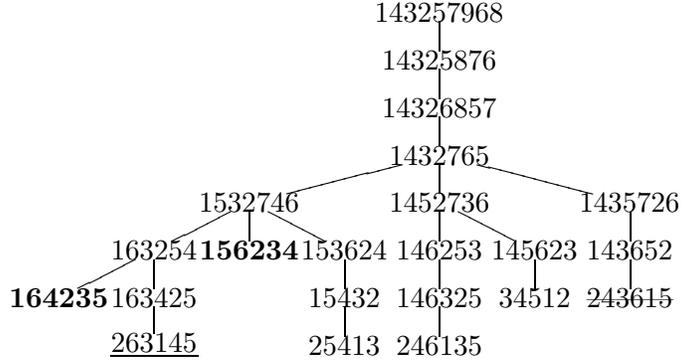

$$
\xy 0;/r.30pc/:
(0,10)*{143257968}="hh";
(0,5)*{14325876}="ii";
(0,0)*{14326857}="a";
(0,-5)*{1432765}="b";
(-20,-10)*{1532746}="c";
(0,-10)*{1452736}="d";
(20,-10)*{1435726}="e";
(-30,-15)*{163254}="f";
(-20,-15)*{\textbf{156234}}="g";
(-10,-15)*{153624}="h";
(0,-15)*{146253}="i";
(10,-15)*{145623}="j";
(-40,-20)*{\textbf{164235}}="k";
(-30,-20)*{163425}="l";
(0,-20)*{146325}="m";
(-10,-20)*{15432}="aa";
(10,-20)*{34512}="bb";
(20,-15)*{143652}="cc";
(20,-20)*{\rlap{---------}{243615}}="dd";
(-10,-25)*{25413}="ee";
(0,-25)*{246135}="ff";
(-30,-25)*{\underline{263145}}="gg";
 "a"; "b"**\dir{-};
  "b"; "c"**\dir{-};
   "b"; "d"**\dir{-};
    "b"; "e"**\dir{-};
     "c"; "f"**\dir{-};
      "c"; "g"**\dir{-};
       "c"; "h"**\dir{-};
        "f"; "k"**\dir{-};
         "f"; "l"**\dir{-};
          "d"; "i"**\dir{-};
           "d"; "j"**\dir{-};
            "i"; "m"**\dir{-};
"hh"; "ii"**\dir{-};
"ii"; "a"**\dir{-};
"aa"; "h"**\dir{-};
"aa"; "ee"**\dir{-};
"bb"; "j"**\dir{-};
"cc"; "e"**\dir{-};
"cc"; "dd"**\dir{-};
"l"; "gg"**\dir{-};
"m"; "ff"**\dir{-};
\endxy
$$
\caption{MT-tree rooted at $1\times 321\times 1\times 2413$ for Example \ref{tree2}}
\label{figtree2}
\end{figure}
 Summing up all good leaves, we have $\s_{1\times 321}\cdot\s_{1\times2413}=\s_{1432\times 13524}\downarrow A_3=\s_{164235}+\s_{156234}+\s_{263145}+\s_{25413}+\s_{246135}+\s_{34512}$.
\end{example}

Compare the leaves of the above tree and those in Example \ref{tree}. We have the following observations.
\begin{enumerate}
\item The good leaves in Example \ref{tree} ($53124$ and $45123$) stay good in Example \ref{tree2}, simply with a one added in front ( $1\times53124=164235$ and $1\times45123=156234$, bolded in Figure \ref{figtree2}).
\item The remaining good leaves in Example \ref{tree2} are descendants of some bad leaves in Example \ref{tree}. For example, $263145$ (underlined in Figure \ref{figtree2}) is obtained from $52314$ which used to be bad in Example \ref{tree}.
\item For the new good leaves in Example \ref{tree2}, the position of 1 stays the same as their ancestor in Example \ref{tree}. For example, both $263145$ and $52314$ has 1 in the fourth position.
\end{enumerate}
In general, the first and second observations above are true as a consequence of Lemma \ref{same} (we will prove it in the next subsection), and the third observation is true by Lemma \ref{one}.
\begin{lemma}\label{same}For the same $m$ as for $w,u$, the leaves (``good" and ``bad")
of $w\times 1\times u$ are the same as leaves of $w\times u$.
\end{lemma}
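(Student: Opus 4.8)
The statement claims that inserting an extra fixed point (the ``$1$'') in the middle of the pattern $w\times u$, producing $w\times 1\times u$, does not change the multiset of leaves produced by the maximal-transition (MT) tree. My approach is to show that the MT-tree rooted at $w\times 1\times u$ is \emph{isomorphic} to the MT-tree rooted at $w\times u$, node-by-node, under an explicit correspondence that tracks the position of the inserted value. The key observation is that $w\times 1\times u$ differs from $w\times u$ only by shifting the block corresponding to $u$ up by one in both index and value, and inserting a single new letter whose value sits between the $w$-block and the $u$-block. Because MT-moves are determined entirely by the \emph{largest descent} $r(w)$, the position $s(w)$ of the largest value below $w(r)$, and the set $J(w)$, I would verify that each of these data is preserved (up to the predictable shift) when the extra fixed point is inserted.

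\textbf{Key steps, in order.}
First I would set up the explicit bijection on one-line notations: write $w\times u$ with its $w$-block in positions $1,\dots,|w|$ and its $u$-block in positions $|w|+1,\dots$; then $w\times 1\times u$ has the same $w$-block, a single letter $|w|+1$ in position $|w|+1$, and the $u$-block shifted. Second, I would prove the \emph{base step}: that $r(w\times 1\times u)$, $s(\cdot)$, and $J(\cdot)$ correspond to those of $w\times u$ under the shift, so the first MT-move produces matching descendant sets $S(\cdot)$. The crucial point is that the inserted letter is a fixed point lying strictly between the two blocks, so it is never the largest descent and never participates as a $w(s)$ in the Monk formula (\ref{max}); it simply rides along. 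Third, I would argue by induction on the depth of the tree that this correspondence propagates: for every node $v$ in the tree over $w\times u$, the matching node $\tilde v$ over $w\times 1\times u$ has the same descent/transition data, so its children match, and the good/bad status of each leaf is preserved because being ``bad'' depends only on the position of $1$, which the shift leaves invariant.

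\textbf{The main obstacle.}
The delicate part will be step three: verifying that the correspondence between descendants is \emph{stable} under repeated MT-moves, i.e., that the inserted fixed point never gets ``activated'' by an MT-move further down the tree and never merges into the transition data in a way that breaks the bijection. I expect one must track carefully how successive transpositions $t_{rs}$ and $t_{jr}$ act on the shifted block, and confirm that the relative order that makes the inserted letter inert at the root is preserved at every subsequent node. A secondary technical point is bookkeeping the index shift consistently (the $u$-block indices increase by one while the $w$-block is unchanged), so that the claim ``the leaves are the same'' is interpreted correctly as identical permutations after accounting for this canonical shift. Once the invariance of the transition data under the shift is established at each node, the equality of leaf multisets (both good and bad) follows immediately by induction.
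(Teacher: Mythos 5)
Your plan rests on a premise that is false: that the MT-tree of $w\times 1\times u$ is isomorphic node-by-node to that of $w\times u$, because the inserted letter ``is never the largest descent and never participates'' in (\ref{max}) and ``simply rides along.'' Write $w\in S_a$, so the inserted letter has value $a+1$, strictly between every value of the $w$-block and every value of the $u$-block. This has two consequences that break your base step and your induction. First, whenever the active descent $r$ lies in the $u$-block, \emph{every} position $j$ in the $w$-block is disqualified from $J(\cdot)$: the fixed point sits at an intermediate position with value strictly between $w(j)$ and the value at $r$, so the length condition $\ell(wt_{rs}t_{jr})=\ell(w)$ fails. Second, the fixed point's own position does serve as the $j$ of MT-moves, so the inserted letter is moved. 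Concretely, take $w=321$, $u=21$. Then $w\times u=32154$ has $S(32154)=\{32415,34125,42135\}$ (three children, via $j=3,2,1$), while $w\times 1\times u=321465$ has $S(321465)=\{321546\}$, i.e., the single child $32154$, obtained with $j=4$, the position of the inserted letter. The descendant sets already disagree at the root, so there is no isomorphism to propagate. The paper's own examples exhibit the same phenomenon: in Figure~\ref{figtree2} the tree rooted at $143257968$ passes through two extra nodes before reaching $1432\times 2413=14326857$, and in Example~\ref{ab} the chain of $B_{ij}$'s is strictly longer than the chain of $A_i$'s.

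The correct statement is not an isomorphism of trees but an equality of leaf multisets after contracting extra moves, and proving it requires ideas your outline does not contain. In the diagram picture, $D(w\times 1\times u)$ is $D(w\times u)$ with the $u$-boxes shifted down one row and right one column (``delayed'' boxes). When delayed boxes do not obstruct the current move, the MT-move on $w\times 1\times u$ agrees with the one on $w\times u$; when they do obstruct, extra ``cleaning'' moves are forced, and the lemma is rescued by two facts that must be proved: (i) each cleaning move admits exactly one choice of $j$ (always $j>m$), so the extra moves form chains rather than new branches, and after cleaning the affected boxes are no longer delayed, so the process rejoins the process for $w\times u$ at the very same permutation (in the example above, the unique child of $321465$ is literally $w\times u$); and (ii) all cleaning terminates before any leaf is reached, so good and bad leaves are unaffected. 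In short, the big tree is a subdivision of the small one, not a copy of it; your steps two and three would need to be replaced by the uniqueness and termination analysis of these cleaning moves.
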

\begin{lemma}\label{one}For any \textbf{reduced} permutation $w$ (cannot make more MT-moves), if we add $1$ in the
beginning and then apply the MT-moves to $1\times w$, the position of 1 in the leaves is the same as the position of $1$ in $w$.
\end{lemma}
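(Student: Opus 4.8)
The plan is to track the position of the value $1$ (the global minimum) as maximal transitions are applied to $\tilde{w}=1\times w$, and to show that it performs a single rightward jump landing exactly at $p:=w^{-1}(1)$. First I would record the effect of one MT-move on the minimum. If $\pi$ has largest descent $r$ and $s$ is the largest index with $\pi(s)<\pi(r)$ (so automatically $s>r$, since $\pi(r)>\pi(r+1)$ already makes $r+1$ qualify), then each descendant is $v=\pi t_{rs}t_{jr}$ with $j<r$ and $\ell(v)=\ell(\pi)$, and $v$ differs from $\pi$ only by $v(j)=\pi(s)$, $v(r)=\pi(j)$, $v(s)=\pi(r)$. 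Since $r$ is a descent top, the minimum never sits at position $r$; and while the minimum is still at the far left (position $1$ in $\tilde{w}$) it is not the index $s>r$ either. Hence, while the minimum is at position $1$, it can move only by being the chosen index $j$, in which case it jumps from position $1$ to position $r$.

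Combining this with the strict decrease $r(v)<r(\pi)$ recorded after $(\ref{max})$, I would argue that the minimum moves at most once: after it jumps to some position $r_0$, every subsequent largest descent is $<r_0$, so positions $\ge r_0$ are increasing and the minimum sits at the head of an increasing tail and can never again be an index $j<r$. In fact that jump is the last MT-move on its branch, since immediately after it there is no descent at or beyond $r_0$, and the branch terminates at a leaf. It therefore remains to show that on every branch this jump actually occurs and that it lands at $p$.

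For this I would maintain the invariant that, as long as the minimum stays at position $1$, the value $2$ remains at position $p+1$ while position $p$ carries a value $\ge 3$; consequently there is always a descent at position $p$, so $r\ge p$ throughout this phase. Because $w$ is a (bad) leaf, $p=w^{-1}(1)\ge m+2>m+1$, so while the minimum is at position $1$ the permutation is never a good leaf ($r\ge p>m+1$) and, having its minimum at position $1$, is never bad; hence the process cannot stop and $r$ must strictly decrease down to $p$. For $r>p$ the value $2$ at position $p+1$ lies strictly between positions $1$ and $r$ and is smaller than $u(r)=\pi(s)$, which violates the length-increase condition for the transposition $t_{1r}$; thus the minimum cannot jump prematurely. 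When $r=p$, every descendant has $r(v)<p$, so the minimum and the value $2$ cannot both stay put (that would leave a descent at $p$); tracing the two cases $s=p+1$ and $s>p+1$ then shows that every descendant is forced to carry the minimum into position $p$. Hence every leaf has $1$ in position $p$, which is the assertion.

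The hard part will be establishing this invariant rigorously. I must rule out that some length-compatible transposition displaces the value $2$ from position $p+1$ before the minimum jumps (for instance a descendant with $j=p+1$), and confirm that at $r=p$ the move is genuinely forced to send the minimum to $p$ rather than leaving it stranded at position $1$. This is exactly where the reduced hypothesis on $w$ must enter, and I expect the cleanest route is an induction on the number of MT-moves: peel off one transition that fixes the minimum, verify that the resulting permutation is again of the form $1\times w'$ with $(w')^{-1}(1)=p$ and inherits the reduced structure, and apply the inductive hypothesis. Checking that the maximal-transition length conditions interact with the reduced structure so as to preserve this invariant at each step is the delicate point of the argument.
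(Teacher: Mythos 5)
Your proposal leaves unproved exactly the step that carries the content of the lemma, and two of the intermediate claims you do assert are false. The missing idea is how reducedness enters, and it enters at the very first move. Reduced means there is no admissible pivot for $w$ itself, i.e.\ no $j<r_0$ with $w(j)<w(s_0)$, where $r_0$ is the last descent of $w$ and $s_0$ is the largest index with $w(s_0)<w(r_0)$. Two things follow at once: every value smaller than $w(s_0)$ sits strictly to the right of $r_0$, so $w(r_0+1)=1$ and $p=w^{-1}(1)=r_0+1$; and in $1\times w$, whose largest descent is $r_0+1=p$ and whose $s$-index is $s_0+1$, every position $2\le j\le r_0$ carries the value $w(j-1)+1>w(s_0)+1$, so $j=1$ is the \emph{only} admissible pivot. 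Hence the root of the tree has a unique child, and that single forced move already lands the $1$ at position $p$. The multi-move ``phase'' your argument revolves around --- transitions applied while the minimum waits at position $1$, with $r$ decreasing down to $p$, controlled by an invariant about the value $2$ --- is empty, and the induction you sketch (``peel off one transition that fixes the minimum'') has no step to peel, because no transition fixes the minimum. Since you yourself defer the verification of this invariant as ``the hard part'' and ``the delicate point,'' what you have is an outline whose unproved core is the whole lemma.

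Beyond this, two asserted steps would fail. First, the strict decrease $r(v)<r(\pi)$ that you quote from the text after (\ref{max}) is false in general: $\pi=25134$ has the unique descendant $42135$, and both have largest descent $2$; only $r(v)\le r(\pi)$ holds. Weak monotonicity still shows the minimum can never again serve as a pivot $j$ once it has jumped (after the jump, position $r$ holds the value $1$, so all later descents lie strictly to its left), but it destroys your forcing argument at $r=p$: a descendant can perfectly well retain a descent at $p$, so ``the minimum and the value $2$ cannot both stay put'' is not a consequence; the jump is forced by the uniqueness of the pivot $j=1$, i.e.\ by reducedness, not by descent bookkeeping. Second, ``that jump is the last MT-move on its branch'' is also wrong: for the reduced permutation $w=321$, the tree of $1\times w=1432$ is $1432\to 2413\to 3214$; the jump is the first edge and another move follows, with $1$ fixed at position $3$ throughout. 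What is true --- and is the one small check needed after the jump --- is that every later largest descent is $<p$, so the $1$ can only be involved in a move as the index $s$; in that case an admissible $j$ would need a value smaller than $1$, so no descendant exists and the branch simply ends with the $1$ unmoved. Finally, note that you quietly add the hypothesis that $w$ is a bad leaf ($p\ge m+2$); the lemma assumes only that $w$ is reduced.
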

\begin{proof} Let $r_0$ be the last descent of $w$, $s_0$ be the largest number such that $w(s_0)<w(r_0)$.
$w$ is reduced implies that in $w$, we can not find any $j$ such that $j<r_0$ and $w(j)<w(s_0)$. Then since $r_0$ is the last
descent, we can see that $w(r_0+1)=1$. Then in
the first move for $1\times w$, we will have $j=1$, $r=r_0+1$ thus and move $1$ to the position of $r_0+1$.
 After this move, all the rest will not change the position of $1$. So $1$ will be in the position $r_0+1$, which is the same as the position
of $1$ in $w$. After this move, all the rest will not change the position of $1$.
\end{proof}

 Now notice that in Example \ref{tree2}, there is still one bad leaf $243615$ (see Figure \ref{figtree2}). So in the next step $\s_{1^2\times 321}\cdot\s_{1^2\times 2413}$, there will be some more good leaves with $243615$ as ancestor. After that, the expansion $\s_{1^n\times 321}\cdot\s_{1^n\times 2413}$, for $n\ge 2$ should have no more new permutations. And in fact, this is the case:
$\s_{1^n\times 321}\cdot\s_{1^n\times 2413}=\s_{1^n\times 53124}+\s_{1^n\times 45123}+\s_{1^{n-1}\times263145}+\s_{1^{n-1}\times25413}+\s_{1^{n-1}\times246135}+\s_{1^{n-1}\times34512}+\s_{1^{n-2}\times 236415}$, for all $n\ge 2$.
So we have
\begin{equation}\label{sta}
F_{321}\cdot F_{2413}=F_{53124}+F_{45123}+F_{263145}+F_{25413}+F_{246135}+F_{34512}+F_{236415}.
\end{equation}
So the stability number for $\s_{321}\cdot\s_{2413}$ is $2$, as predicted by Theorem \ref{full} part 2 that it should be bounded by $\ell(c(321))=\ell(c(2413))=2$. Now look at the positions of 1 in each permutation appearing on the right hand side of (\ref{sta}): $I=\{3,4,5\}$, which is an interval without any gaps. In general, we have
\begin{lemma}\label{gap} Let $F_w\cdot F_u=\sum_{v\in V}F_v$ be the expansion we get by Theorem \ref{main}. Let $I\{v^{-1}(1)\mid v\in V\}$. Then $I=[a,b]$ an interval without any gaps.
\end{lemma}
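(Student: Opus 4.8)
The plan is to recast the claim in terms of codes and then to show that the positions realized at successive levels $V_0, V_1, \dots$ are consecutive integers. First I would record the elementary identity $v^{-1}(1)=\min\{i:c_i(v)=0\}$ for every permutation $v$ with $c(v)=(c_1,c_2,\dots)$: if $v(s)=1$ then for each $i<s$ the entry $1$ lies to the right of $i$ and below $v(i)$, so $c_i\ge1$, whereas $c_s=0$ because $1$ is minimal. Thus the first zero of the code sits exactly at $v^{-1}(1)$, and the lemma is equivalent to the statement that $I=\{\min\{i:c_i(v)=0\}\mid v\in V\}$ is an interval. In particular the top term $v$ of $\s_w\cdot\s_u$, with $c(v)=c(w)+c(u)$, contributes the value $\delta:=\min\{i:c(w)_i=c(u)_i=0\}$, which I expect to be the left endpoint $a$.

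The core of the argument is the claim that for each $t$ the base permutations of $V_t$ all place $1$ in the \emph{same} position, and that this position is $\delta+t$, one larger than at the previous level. Granting this, the realized positions are exactly $\{\delta+t : V_t\neq\emptyset\}$; since (in the Grassmannian case) Theorem \ref{full}(1) tells us the nonempty $V_t$ form an initial segment $t=0,1,\dots,s-1$, we get $I=\{\delta,\delta+1,\dots,\delta+s-1\}=[\delta,\delta+s-1]$, with no gaps. To prove the claim I would induct on $t$. The base case $t=0$ asserts that every Schubert term of $\s_w\cdot\s_u$ has $1$ in position $\delta$; the inductive step passes from level $t$ to level $t+1$ by Theorem \ref{main}(1), whose difference $\s_{1\times w}\cdot\s_{1\times u}-\sum_{v_0\in V_0}c_{w,u}^{v_0}\,\s_{1\times v_0}$ isolates the genuinely new permutations. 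Geometrically this is visible in the maximal-transition tree: by Lemmas \ref{same} and \ref{one} the permutations created in passing from level $t$ to level $t+1$ inherit the position of $1$ from the bad leaves of the previous level, and such a leaf is activated precisely when its $1$ has entered the new variable window, which has moved right by exactly one.

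The hard part will be the structural claim itself, in two respects. First, the per-level uniformity---that all of $V_t$ shares one position of $1$---is not immediate from Theorem \ref{main}(1), which only forces $v(1)\neq1$ (first zero $\ge2$) for the new permutations; pinning the position exactly at $\delta+t$ requires controlling the codes of all terms, for which I would use the top-term description (\ref{top}) together with the coefficient balance of Lemma \ref{good} for good-$n$ pairs, in the style of the proof of Theorem \ref{main}(2). Second, in the fully general (non-Grassmannian) setting the initial-segment property is only Conjecture \ref{conj}: a skipped empty level $V_i$ would leave $\delta+i$ unrealized and open a gap, so the clean conclusion is unconditional only when one of $w,u$ is Grassmannian. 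A final caveat is that $w$ and $u$ must be taken reduced ($w(1)\neq1$, $u(1)\neq1$); since $F_{1\times w'}=F_{w'}$, a non-reduced representative changes the expansion and can genuinely skip a value of $v^{-1}(1)$, making $I$ fail to be an interval.
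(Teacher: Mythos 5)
Your argument rests on two pillars, and both fail. The first is the structural claim that all permutations in $V_t$ share a single position of $1$, namely $\delta+t$. This is false, even in the Grassmannian case with reduced inputs. Take $w=3142$ and $u=231$ (so $u$ is Grassmannian, $w(1)\neq 1\neq u(1)$): then
$\s_{3142}\cdot\s_{231}=(x_1^2x_2+x_1^2x_3)\cdot x_1x_2=x_1^3x_2^2+x_1^3x_2x_3=\s_{4312}+\s_{42315}$,
so $V_0$ already contains $4312$ (with $1$ in position $3$) and $42315$ (with $1$ in position $4$); moreover $\delta=4$ here, so $V_0$ is neither uniform nor pinned at $\delta$, and $\delta$ is not even a lower bound for $I$. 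The paper's own introductory example makes the same point one level up: for $w=3241$, $u=4312$, the set $V_1$ contains $561324$, $365124$ and $265314$, whose values $v^{-1}(1)$ are $3$, $4$ and $5$ respectively. So the interval structure of $I$ does not come from per-level uniformity; the positions of $1$ spread out \emph{within} a single $V_t$, and the no-gap property is genuinely a statement about the union over all levels.

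The second pillar is the appeal to Theorem \ref{full}(1) to guarantee that the nonempty levels form an initial segment. Within this paper that is circular: Lemma \ref{gap}, together with Theorem \ref{main}, is exactly the ingredient used to establish Theorem \ref{full}, and the proof of Theorem \ref{full}(1) is the same no-gap argument; you cannot assume it while proving the lemma. The paper's actual proof works at the level of a single maximal transition, in the diagram picture: $v^{-1}(1)-1$ equals the number of boxes in the first column of $D(v)$, and when an MT-move is applied, the admissible choices of $j\in J(w)$ produce numbers of boxes added to the first column forming an interval (if portions of lengths $s_1$ and $s_2$ can be moved to the first column, then so can every length in between); propagating this branching property through the MT-tree shows the realized first-column lengths among the leaves have no gaps. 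Your proposal never engages with this branching mechanism, which is where the entire content of the lemma lies; Lemmas \ref{same} and \ref{one} enter your sketch only for inheritance of the position of $1$, which is a different (and insufficient) point. One genuinely correct observation in your write-up is the final caveat: the lemma does require reduced representatives ($w(1)\neq 1$, $u(1)\neq 1$), as $\s_{132}\cdot\s_{132}=\s_{231}+\s_{1423}$ shows, where $I=\{1,3\}$ fails to be an interval; but this observation does not repair the argument.
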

 Lemma \ref{gap} together with Theorem \ref{main} will imply Theorem \ref{full}. For a proof of Lemma \ref{gap}, we also want to use the diagrams interpretation of the MT-move studied in the next subsection.
\begin{lemma}\label{reduced}If a permutation is reduced, then there are no descents after $1$. In other words, the length of the code is the number of boxes in the first column.
\end{lemma}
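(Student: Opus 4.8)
The plan is to unwind the meaning of ``reduced'' into an explicit order condition on the one-line notation, and then locate the entry $1$ relative to the last descent. First I would recall from the definition of the maximal transition (\ref{max}) that $w$ being reduced means precisely $S(w)=\emptyset$, i.e. there is no admissible $j<r$ producing a descendant $wt_{rs}t_{jr}$ of length $\ell(w)$. Write $r=r(w)$ for the last descent and $s=s(w)$ for the largest index with $w(s)<w(r)$, and set $u=wt_{rs}$, so that $u(r)=w(s)$. The condition $\ell(ut_{jr})=\ell(u)+1$ for $j<r$ unwinds to $w(j)<w(s)$ together with the covering condition that no $k$ with $j<k<r$ satisfies $w(j)<w(k)<w(s)$. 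The first step is to show $S(w)=\emptyset$ is equivalent to the clean statement that $w(j)>w(s)$ for all $j<r$: if some $j<r$ had $w(j)<w(s)$, then choosing the \emph{largest} such $j$ automatically meets the covering condition (any intermediate $k$ with a smaller value would contradict maximality of $j$), hence yields a genuine descendant.

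Next, assuming $w$ reduced, I would combine $w(s)<w(j)$ for all $j<r$ with $w(s)<w(r)$ (which holds by the definition of $s$) to conclude that $w(s)$ is strictly smaller than each of $w(1),\dots,w(r)$. From this I would locate $1$: if $w^{-1}(1)\le r$, then $1$ would be one of the entries $w(1),\dots,w(r)$, forcing $w(s)<1$, which is impossible; hence $w^{-1}(1)>r$. Since $r$ is the last descent, the suffix $w(r+1)<w(r+2)<\cdots<w(n)$ is strictly increasing, and the global minimum $1$, now known to lie in this suffix, must sit at its first position. Therefore $w(r+1)=1$, i.e. $w^{-1}(1)=r+1$. (This is exactly the placement of $1$ used in the proof of Lemma \ref{one}.)

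Finally I would translate this into the two asserted forms. Since the last descent is at $r=w^{-1}(1)-1$, every position $\ge w^{-1}(1)$ lies in the increasing suffix, so there are no descents after $1$. For the code reformulation, I would use the general fact that $\ell(c(w))$ equals the last descent $r(w)$ (for $i>r$ the suffix is increasing, so $c_i=0$, while $c_r\ne 0$), together with the elementary count that the first column of $D(w)$ consists exactly of the boxes $(i,1)$ with $i<w^{-1}(1)$, namely $w^{-1}(1)-1$ of them. The reduced identity $r(w)=w^{-1}(1)-1$ then equates the length of the code with the number of first-column boxes.

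The main obstacle is the first step: correctly reading off from the length and covering conditions defining $S(w)$ that $S(w)=\emptyset$ is equivalent to ``$w(s)$ lies below all earlier entries,'' in particular verifying that the mere existence of some $j<r$ with $w(j)<w(s)$ already forces a valid length-additive transposition. Once this order condition is established, the placement of $1$ and the diagram bookkeeping are routine.
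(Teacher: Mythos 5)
Your proof is correct and follows essentially the same route as the paper's: both rest on the observation that if some $j<r$ satisfies $w(j)<w(s)$ then an admissible MT-move exists (the paper states this contrapositively, as ``a descent after $1$ forces a $j$ for which we can apply an MT-move, so $w$ is not reduced''). Your write-up is in fact more complete than the paper's one-line argument --- the covering-condition check via the largest such $j$, the placement $w^{-1}(1)=r+1$, and the bookkeeping identifying $\ell(c(w))=r(w)$ with the $w^{-1}(1)-1$ first-column boxes are exactly what the paper's ``it is not hard to see'' glosses over.
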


\begin{proof} Suppose there is a descent after $1$, then it is not hard to see that this permutation is not reduced, since there must exists a $j$ for which we can apply MT-move.
\end{proof}

Now assume that $\s_u$ is a Schur polynomial and $\s_w$ is a Schubert polynomial both in $m$ variables. Use the MT algorithm, we can show the result in both Theorem \ref{main} and Theorem \ref{full}.
\begin{proof}[proof of Theorem \ref{main}]
\begin{enumerate}
\item Consider the expansion of $\s_{1\times w}\times \s_{1\times u}$ by looking at the tree rooted
at $1\times w\times 1\times u$. By definition, the good leaves of the tree rooted at $v\times u$ has last descent $\le m$ and position of the letter $1$ is not larger than $m+1$. By Lemma \ref{same}, leaves of $v\times  1\times u$ are the same as the leaves for  $v\times u$, both good leaves and
bad leaves. Then it is not hard to see that leaves of $1\times v\times 1\times u$ are
just leaves of $v\times  1\times u$ with an $1$ appended to the front. Now $n=1+\ell$, so good leaves of $v\times u$
are still good leaves for $1\times v\times 1\times u$, just with an $1$ appended to the front. Now for those bad leaves,
by Lemma \ref{one}, after we append $1$ to the front, and continue to apply the moves,
 the position of $1$ will not change. But because the number of variables in $\s_{1\times w}$ and $\s_{1\times u}$ are $m+1$ instead of $m$ now, some of the bad leaves (when position
 of $1$ is $m+2$) will become good. Moveover these newly added good leaves will not start with $1$. So the Schubert polynomials
 appeared in the expansion of $X_{1\times v}\times X_{1\times u}$ are indexed by
 the old good leaves, which all start with $1$, and possibly some new good leaves, which all do not start with $1$. This shows the first part.
\item When we append more ones in front of $u$ and $v$, all leaves of the tree will become good leaves, but there are only finitely many of them.
 So finally the expansion will be stable.
  \end{enumerate}
\end{proof}

\begin{proof}[Proof of Theorem \ref{full}]
\begin{enumerate}
 \item By MT algorithm, if we are able to move a portion with length $s_1$ and $s_2$ to the first column, then we are able to move a portion with any length between $s_1$ and $s_2$. So there is no gap when new terms showing up in the expansion when $n$ increases. Applying different choices of $j$ gives us different number of potential boxes to be added to the first column. But this number should have no gap: if there are cases when there are one boxes left and four boxes left. Then there must be some combination of choices of $j$'s such that there are two and three boxes left to be added to the first column.
\item By Lemma \ref{reduced}, for all leaves, the length of the code is the length of the first column. It is clear that the longest possible first column is the sum of the length of $c(w)$ and $c(u)$. And the stable number bounded by the maximal length of the first column minus $c(w)$, so we get a bound by the maximal length of $c(w)$ and $c(u)$. In the case $w=u$, and both being Grassmannian, we have $m=w^{-1}(1)-1$, which is exactly the stable number.

 \end{enumerate}
\end{proof}

 \subsection{MT-move in terms of diagrams}
 In order to prove Lemma \ref{same} and Lemma \ref{gap}, we want to describe the MT-move in terms of diagrams.

First, there is a correspondence
between the set of inversions of $w$ and the boxes in the diagram. An inversion in a permutation $w$ is a pair of $(i,j)$ such that $i<j$ and $w^{-1}(i)>w^{-1}(j)$. We denote a box of the diagram in the $i$th row and $j$th column by $B_{ij}$. Then the box $B_{ij}$ corresponds to the inversion $(j,w(i))$ in $w$. For example, here is the diagram for $w=3215746$ (see Figure \ref{w}). The box $B_{56}$ (indicated by a bullet) corresponds to the inversion $(6,7)$ in $w$.
\begin{figure}[htp]
  \begin{center}
    \subfigure[w]{\label{w}$\,\,\,\,$ $\xy 0;/r.6pc/:
(-0.5,6.5)*{3};
(-0.5,5.5)*{2};
(-0.5,4.5)*{1};
(-0.5,3.5)*{5};
(-0.5,2.5)*{7};
(-0.5,1.5)*{4};
(-0.5,0.5)*{6};
(0,0)="(0,0)";
(1,0)="(1,0)";
(2,0)="(2,0)";
(3,0)="(3,0)";
(4,0)="(4,0)";
(5,0)="(5,0)";
(6,0)="(6,0)";
(7,0)="(7,0)";
(1,7)="(1,7)";
(2,7)="(2,7)";
(3,7)="(3,7)";
(4,7)="(4,7)";
(5,7)="(5,7)";
(6,7)="(6,7)";
(7,7)="(7,7)";
(0,1)="(0,1)";
(0,2)="(0,2)";
(0,3)="(0,3)";
(0,4)="(0,4)";
(0,5)="(0,5)";
(0,6)="(0,6)";
(0,7)="(0,7)";
(7,1)="(7,1)";
(7,2)="(7,2)";
(7,3)="(7,3)";
(7,4)="(7,4)";
(7,5)="(7,5)";
(7,6)="(7,6)";
(7,7)="(7,7)";
 "(0,0)"; "(7,0)"**\dir{.};
  "(0,1)"; "(7,1)"**\dir{.};
"(0,2)"; "(7,2)"**\dir{.};
"(0,3)"; "(7,3)"**\dir{.};
"(0,4)"; "(7,4)"**\dir{.};
"(0,5)"; "(7,5)"**\dir{.};
"(0,6)"; "(7,6)"**\dir{.};
"(0,7)"; "(7,7)"**\dir{.};
"(0,0)"; "(0,7)"**\dir{.};
"(1,0)"; "(1,7)"**\dir{.};
"(2,0)"; "(2,7)"**\dir{.};
"(3,0)"; "(3,7)"**\dir{.};
"(4,0)"; "(4,7)"**\dir{.};
"(5,0)"; "(5,7)"**\dir{.};
"(6,0)"; "(6,7)"**\dir{.};
"(7,0)"; "(7,7)"**\dir{.};
(0.5,0)="(0.5,0)";
(1.5,0)="(1.5,0)";
(2.5,0)="(2.5,0)";
(3.5,0)="(3.5,0)";
(4.5,0)="(4.5,0)";
(5.5,0)="(5.5,0)";
(6.5,0)="(6.5,0)";
(7,0.5)="(7,0.5)";
(7,1.5)="(7,1.5)";
(7,2.5)="(7,2.5)";
(7,3.5)="(7,3.5)";
(7,4.5)="(7,4.5)";
(7,5.5)="(7,5.5)";
(7,6.5)="(7,6.5)";
(2.5,6.5)*{\circ}="w1";
(1.5,5.5)*{\circ}="w2";
(0.5,4.5)*{\circ}="w3";
(4.5,3.5)*{\circ}="w4";
(6.5,2.5)*{\circ}="w5";
(3.5,1.5)*{\circ}="w6";
(5.5,0.5)*{\circ}="w7";
"w1"; "(2.5,0)"**\dir{-};
"w1"; "(7,6.5)"**\dir{-};
"w2"; "(1.5,0)"**\dir{-};
"w2"; "(7,5.5)"**\dir{-};
"w3"; "(0.5,0)"**\dir{-};
"w3"; "(7,4.5)"**\dir{-};
"w4"; "(4.5,0)"**\dir{-};
"w4"; "(7,3.5)"**\dir{-};
"w5"; "(6.5,0)"**\dir{-};
"w5"; "(7,2.5)"**\dir{-};
"w6"; "(3.5,0)"**\dir{-};
"w6"; "(7,1.5)"**\dir{-};
"w7"; "(5.5,0)"**\dir{-};
"w7"; "(7,0.5)"**\dir{-};
(5.5,2.5)*{\bullet};
\endxy
$ $\,\,\,\,$}
    \subfigure[v]{\label{v}$\,\,\,\,$  $
\xy 0;/r.6pc/:
(-0.5,6.5)*{3};
(-0.5,5.5)*{2};
(-0.5,4.5)*{1};
(-0.5,3.5)*{6};
(-0.5,2.5)*{5};
(-0.5,1.5)*{4};
(-0.5,0.5)*{7};
(0,0)="(0,0)";
(1,0)="(1,0)";
(2,0)="(2,0)";
(3,0)="(3,0)";
(4,0)="(4,0)";
(5,0)="(5,0)";
(6,0)="(6,0)";
(7,0)="(7,0)";
(1,7)="(1,7)";
(2,7)="(2,7)";
(3,7)="(3,7)";
(4,7)="(4,7)";
(5,7)="(5,7)";
(6,7)="(6,7)";
(7,7)="(7,7)";
(0,1)="(0,1)";
(0,2)="(0,2)";
(0,3)="(0,3)";
(0,4)="(0,4)";
(0,5)="(0,5)";
(0,6)="(0,6)";
(0,7)="(0,7)";
(7,1)="(7,1)";
(7,2)="(7,2)";
(7,3)="(7,3)";
(7,4)="(7,4)";
(7,5)="(7,5)";
(7,6)="(7,6)";
(7,7)="(7,7)";
 "(0,0)"; "(7,0)"**\dir{.};
  "(0,1)"; "(7,1)"**\dir{.};
"(0,2)"; "(7,2)"**\dir{.};
"(0,3)"; "(7,3)"**\dir{.};
"(0,4)"; "(7,4)"**\dir{.};
"(0,5)"; "(7,5)"**\dir{.};
"(0,6)"; "(7,6)"**\dir{.};
"(0,7)"; "(7,7)"**\dir{.};
"(0,0)"; "(0,7)"**\dir{.};
"(1,0)"; "(1,7)"**\dir{.};
"(2,0)"; "(2,7)"**\dir{.};
"(3,0)"; "(3,7)"**\dir{.};
"(4,0)"; "(4,7)"**\dir{.};
"(5,0)"; "(5,7)"**\dir{.};
"(6,0)"; "(6,7)"**\dir{.};
"(7,0)"; "(7,7)"**\dir{.};
(0.5,0)="(0.5,0)";
(1.5,0)="(1.5,0)";
(2.5,0)="(2.5,0)";
(3.5,0)="(3.5,0)";
(4.5,0)="(4.5,0)";
(5.5,0)="(5.5,0)";
(6.5,0)="(6.5,0)";
(7,0.5)="(7,0.5)";
(7,1.5)="(7,1.5)";
(7,2.5)="(7,2.5)";
(7,3.5)="(7,3.5)";
(7,4.5)="(7,4.5)";
(7,5.5)="(7,5.5)";
(7,6.5)="(7,6.5)";
(2.5,6.5)*{\circ}="w1";
(1.5,5.5)*{\circ}="w2";
(0.5,4.5)*{\circ}="w3";
(5.5,3.5)*{\circ}="w4";
(4.5,2.5)*{\circ}="w5";
(3.5,1.5)*{\circ}="w6";
(6.5,0.5)*{\circ}="w7";
"w1"; "(2.5,0)"**\dir{-};
"w1"; "(7,6.5)"**\dir{-};
"w2"; "(1.5,0)"**\dir{-};
"w2"; "(7,5.5)"**\dir{-};
"w3"; "(0.5,0)"**\dir{-};
"w3"; "(7,4.5)"**\dir{-};
"w4"; "(5.5,0)"**\dir{-};
"w4"; "(7,3.5)"**\dir{-};
"w5"; "(4.5,0)"**\dir{-};
"w5"; "(7,2.5)"**\dir{-};
"w6"; "(3.5,0)"**\dir{-};
"w6"; "(7,1.5)"**\dir{-};
"w7"; "(6.5,0)"**\dir{-};
"w7"; "(7,0.5)"**\dir{-};
(4.5,3.5)*{\bullet};
\endxy
$  $\,\,\,\,$}
  \end{center}
  \caption{MT-move}
  \label{fig:edge}
\end{figure}

Now we study the MT-move in terms of diagrams. Let $v$ be a descendant of $w$ via an MT-move. Then $D(v)$ is obtained from $D(w)$ by moving some part of the diagram up and left. For example, as shown in the first step of Example  \ref{tree}, applying an MT-move to $w=3215746$,  we get  $v=3216547$, and the diagram of  $v$  is obtained from $D(w)$ by moving the box with a bullet up and left by one row and one column (see Figure \ref{fig:edge}). Notice that this diagram move is very similar to the move described in \cite{kmove}.

Recall that $v=wt_{rs}t_{sj}$, where $r$ is the largest descent of $w$, $s$ is the largest number $s>r$ such that $w(s)<w(r)$, and $j$ is some number $j<r$ such that $wt_{rs}t_{sj}$ has the same length as $w$. From $w$ to $v=wt_{rs}t_{sj}$, we have the following change of inversions:
\begin{enumerate}
\item change each inversion $(w(i),w(s))$ to an inversion $(w(i),w(j))$, for $j<i\le r$. In terms of diagrams, this corresponds to moving the part of column $w(s)$ left to column $w(j)$, where the part is from the $(j+1)$th row to the $r$th row.
\item change each inversion $(w(r), w(i))$ to $(w(j), w(i))$, for $r<i<s$ and $w(i)>w(j)$. In terms of diagrams, this corresponds to moving the part of row $r$ with column indices in $\{w(i)\mid w(i)>w(j),\,r<i<s\}$ up to row $j$.
\end{enumerate}
In the diagram of $w$, consider the right-down corner box $B(w)$, i.e., the box in the rightmost column of the lowest row. By the definition of $r(w)=r$ and $s(w)=s$, we have $B(w)=B_{r,w(s)}$. For each $j\in J(w)$, denote the box $B_{j,w(j)}$ by $T(w,j)$. Then the above changes of inversions can be seen as moving some blocks with $B(w)$ as its right-down corner up and left so that $T(w,j)$ becomes its up-left corner. For example, consider $w=321654$ in the branching part of Example $\ref{tree}$, with $J(w)=\{1,2,3\}$. See Figure \ref{anc} for $D(w)$, where $B(w)$ is marked with a bullet and all three possible $T(w,j)$'s are marked with $\times$. Now applying MT-moves to $D(w)$, all three $D(v)$, for $v\in S(w)$ are shown in Figure \ref{j1}, \ref{j2} and \ref{j3}. Using this diagram interpretation of the MT-move, we can prove Lemma \ref{same} by comparing the MT-moves of $D(w\times u)$ and $D(1\times w\times 1\times u)$.

\begin{proof}[proof of Lemma \ref{same}]
Compare the diagram of $w\times u$ and $w\times 1\times u$ (see Example \ref{ab}). The two diagrams are basically the same, but because of the ``$1$" in the middle of $w\times 1\times u$, the boxes corresponding to $u$ are down by one row and right by one column. We call them delayed boxes. Now we apply the maximal transitions to $t:=w\times 1\times u$ and compare the moves to those for $w\times u$. As in the previous discussion, we start from the rightmost box $B$ (marked with a bullet) in the lowest row, which is in row $r$ and column $w(t)$. We will move some part of the diagram with $B$ as as its right-down corner of the diagram up and left so that the box $T$ in  row $j$ and column $w(t)$ becomes its up-left corner. Compare each move of $s$ to the moves of $w\times u$. There are two cases:
\begin{enumerate}
\item the corresponding box for $B$ in $w\times u$ is also the rightmost box in the lowest row. In Example \ref{ab}, $A_i$ and the last $B_{ij}$ in its row has the same $B$ box ($A_1$ and $B_{12}$, $A_2$ and $B_{22}$, etc.).
\item In the lowest row, some delayed boxes are to the right of the box $B$ for $w\times u$. In Example \ref{ab}, every $B_{ij}$ not belongs to the previous case is in this case.
\end{enumerate}

Therefore, as we apply maximal transition to $w\times 1\times u$, if the delayed boxes are not on the way, we can apply the same move as for  $w\times u$; if we are not so lucky, we need to clear our way by moving all the delayed boxes up and left first. There are two important things to notice for this case: 1) there is only one possible $j$ to use (and always $j>m$), and it is exactly one row up and one column left to the delayed block that we need to move. 2) after moving this delayed block up and left by one, the boxes in this block are no longer delayed. In other words, this cleaning work will not affect the actually moving work, and this cleaning work is finite. Once we finish all the cleaning work, we will get the exactly the same permutation in the process of $w\times u$, as in the example $A_5=B_{52}$.  

 Now it is left to show that all the cleaning work can be done before we get to the leaves of $w\times u$. Consider the condition when we get to a leave: 1) all boxes are above the $(m+1)$th row (good leaves) or 2) the first column has more than $m$ boxes (bad leaves). If there are still some delayed boxes, since in the process of cleaning $j>m$, it is not possible that all boxes are above the $(m+1)$th row. In the second case, assume that the first column already has more than $m$ boxes, but we still have delayed boxes. Since these delayed boxes are not in the first column, we can still do the cleaning until there are no delayed boxes left.
\end{proof}

\begin{example}\label{ab}Here is an example for $w=1^2\times32154$ and $u=698435127$ with $m=6$. We start from $A_1=w\times u$ and $B_{11}=w\times 1\times u$. Here are their diagrams (for simplicity, we ignore the $1^2$ in the front. )
$$A_1:\,\,\xy 0;/r.5pc/:
(0,0)="(0,0)";
(1,0)="(1,0)";
(2,0)="(2,0)";
(3,0)="(3,0)";
(4,0)="(4,0)";
(5,0)="(5,0)";
(6,0)="(6,0)";
(7,0)="(7,0)";
(8,0)="(8,0)";
(9,0)="(9,0)";
(10,0)="(10,0)";
(11,0)="(11,0)";
(12,0)="(12,0)";
(13,0)="(13,0)";
(14,0)="(14,0)";
(1,14)="(1,14)";
(2,14)="(2,14)";
(3,14)="(3,14)";
(4,14)="(4,14)";
(5,14)="(5,14)";
(6,14)="(6,14)";
(7,14)="(7,14)";
(8,14)="(8,14)";
(9,14)="(9,14)";
(10,14)="(10,14)";
(11,14)="(11,14)";
(12,14)="(12,14)";
(13,14)="(13,14)";
(14,14)="(14,14)";
(0,1)="(0,1)";
(0,2)="(0,2)";
(0,3)="(0,3)";
(0,4)="(0,4)";
(0,5)="(0,5)";
(0,6)="(0,6)";
(0,7)="(0,7)";
(0,8)="(0,8)";
(0,9)="(0,9)";
(0,10)="(0,10)";
(0,11)="(0,11)";
(0,12)="(0,12)";
(0,13)="(0,13)";
(0,14)="(0,14)";
(14,1)="(14,1)";
(14,2)="(14,2)";
(14,3)="(14,3)";
(14,4)="(14,4)";
(14,5)="(14,5)";
(14,6)="(14,6)";
(14,7)="(14,7)";
(14,8)="(14,8)";
(14,9)="(14,9)";
(14,10)="(14,10)";
(14,11)="(14,11)";
(14,12)="(14,12)";
(14,13)="(14,13)";
(14,14)="(14,14)";
 "(0,0)"; "(14,0)"**\dir{.};
  "(0,1)"; "(14,1)"**\dir{.};
"(0,2)"; "(14,2)"**\dir{.};
"(0,3)"; "(14,3)"**\dir{.};
"(0,4)"; "(14,4)"**\dir{.};
"(0,5)"; "(14,5)"**\dir{.};
"(0,6)"; "(14,6)"**\dir{.};
"(0,7)"; "(14,7)"**\dir{.};
"(0,8)"; "(14,8)"**\dir{.};
  "(0,9)"; "(14,9)"**\dir{.};
"(0,10)"; "(14,10)"**\dir{.};
"(0,11)"; "(14,11)"**\dir{.};
"(0,12)"; "(14,12)"**\dir{.};
"(0,13)"; "(14,13)"**\dir{.};
"(0,14)"; "(14,14)"**\dir{.};
"(0,0)"; "(0,14)"**\dir{.};
"(1,0)"; "(1,14)"**\dir{.};
"(2,0)"; "(2,14)"**\dir{.};
"(3,0)"; "(3,14)"**\dir{.};
"(4,0)"; "(4,14)"**\dir{.};
"(5,0)"; "(5,14)"**\dir{.};
"(6,0)"; "(6,14)"**\dir{.};
"(7,0)"; "(7,14)"**\dir{.};
"(8,0)"; "(8,14)"**\dir{.};
"(9,0)"; "(9,14)"**\dir{.};
"(10,0)"; "(10,14)"**\dir{.};
"(11,0)"; "(11,14)"**\dir{.};
"(12,0)"; "(12,14)"**\dir{.};
"(13,0)"; "(13,14)"**\dir{.};
"(14,0)"; "(14,14)"**\dir{.};
(0.5,0)="(0.5,0)";
(1.5,0)="(1.5,0)";
(2.5,0)="(2.5,0)";
(3.5,0)="(3.5,0)";
(4.5,0)="(4.5,0)";
(5.5,0)="(5.5,0)";
(6.5,0)="(6.5,0)";
(7.5,0)="(7.5,0)";
(8.5,0)="(8.5,0)";
(9.5,0)="(9.5,0)";
(10.5,0)="(10.5,0)";
(11.5,0)="(11.5,0)";
(12.5,0)="(12.5,0)";
(13.5,0)="(13.5,0)";
(14,0.5)="(14,0.5)";
(14,1.5)="(14,1.5)";
(14,2.5)="(14,2.5)";
(14,3.5)="(14,3.5)";
(14,4.5)="(14,4.5)";
(14,5.5)="(14,5.5)";
(14,6.5)="(14,6.5)";
(14,7.5)="(14,7.5)";
(14,8.5)="(14,8.5)";
(14,9.5)="(14,9.5)";
(14,10.5)="(14,10.5)";
(14,11.5)="(14,11.5)";
(14,12.5)="(14,12.5)";
(14,13.5)="(14,13.5)";
(2.5,13.5)*{\circ}="w1";
(1.5,12.5)*{\circ}="w2";
(0.5,11.5)*{\circ}="w3";
(4.5,10.5)*{\circ}="w4";
(3.5,9.5)*{\circ}="w5";
(10.5,8.5)*{\circ}="w6";
(13.5,7.5)*{\circ}="w7";
(12.5,6.5)*{\circ}="w8";
(8.5,5.5)*{\circ}="w9";
(7.5,4.5)*{\circ}="w10";
(9.5,3.5)*{\circ}="w11";
(5.5,2.5)*{\circ}="w12";
(6.5,1.5)*{\circ}="w13";
(11.5,0.5)*{\circ}="w14";
"w1"; "(2.5,0)"**\dir{-};
"w1"; "(14,13.5)"**\dir{-};
"w2"; "(1.5,0)"**\dir{-};
"w2"; "(14,12.5)"**\dir{-};
"w3"; "(0.5,0)"**\dir{-};
"w3"; "(14,11.5)"**\dir{-};
"w4"; "(4.5,0)"**\dir{-};
"w4"; "(14,10.5)"**\dir{-};
"w5"; "(3.5,0)"**\dir{-};
"w5"; "(14,9.5)"**\dir{-};
"w6"; "(10.5,0)"**\dir{-};
"w6"; "(14,8.5)"**\dir{-};
"w7"; "(13.5,0)"**\dir{-};
"w7"; "(14,7.5)"**\dir{-};
"w8"; "(12.5,0)"**\dir{-};
"w8"; "(14,6.5)"**\dir{-};
"w9"; "(8.5,0)"**\dir{-};
"w9"; "(14,5.5)"**\dir{-};
"w10"; "(7.5,0)"**\dir{-};
"w10"; "(14,4.5)"**\dir{-};
"w11"; "(9.5,0)"**\dir{-};
"w11"; "(14,3.5)"**\dir{-};
"w12"; "(5.5,0)"**\dir{-};
"w12"; "(14,2.5)"**\dir{-};
"w13"; "(6.5,0)"**\dir{-};
"w13"; "(14,1.5)"**\dir{-};
"w14"; "(11.5,0)"**\dir{-};
"w14"; "(14,0.5)"**\dir{-};
\endxy
\,\,\,\,\,B_{11}:\,\, \xy 0;/r.5pc/:
(0,0)="(0,0)";
(1,0)="(1,0)";
(2,0)="(2,0)";
(3,0)="(3,0)";
(4,0)="(4,0)";
(5,0)="(5,0)";
(6,0)="(6,0)";
(7,0)="(7,0)";
(8,0)="(8,0)";
(9,0)="(9,0)";
(10,0)="(10,0)";
(11,0)="(11,0)";
(12,0)="(12,0)";
(13,0)="(13,0)";
(14,0)="(14,0)";
(15,0)="(15,0)";
(1,15)="(1,15)";
(2,15)="(2,15)";
(3,15)="(3,15)";
(4,15)="(4,15)";
(5,15)="(5,15)";
(6,15)="(6,15)";
(7,15)="(7,15)";
(8,15)="(8,15)";
(9,15)="(9,15)";
(10,15)="(10,15)";
(11,15)="(11,15)";
(12,15)="(12,15)";
(13,15)="(13,15)";
(14,15)="(14,15)";
(15,15)="(15,15)";
(0,1)="(0,1)";
(0,2)="(0,2)";
(0,3)="(0,3)";
(0,4)="(0,4)";
(0,5)="(0,5)";
(0,6)="(0,6)";
(0,7)="(0,7)";
(0,8)="(0,8)";
(0,9)="(0,9)";
(0,10)="(0,10)";
(0,11)="(0,11)";
(0,12)="(0,12)";
(0,13)="(0,13)";
(0,14)="(0,14)";
(0,15)="(0,15)";
(15,1)="(15,1)";
(15,2)="(15,2)";
(15,3)="(15,3)";
(15,4)="(15,4)";
(15,5)="(15,5)";
(15,6)="(15,6)";
(15,7)="(15,7)";
(15,8)="(15,8)";
(15,9)="(15,9)";
(15,10)="(15,10)";
(15,11)="(15,11)";
(15,12)="(15,12)";
(15,13)="(15,13)";
(15,14)="(15,14)";
 "(0,0)"; "(15,0)"**\dir{.};
  "(0,1)"; "(15,1)"**\dir{.};
"(0,2)"; "(15,2)"**\dir{.};
"(0,3)"; "(15,3)"**\dir{.};
"(0,4)"; "(15,4)"**\dir{.};
"(0,5)"; "(15,5)"**\dir{.};
"(0,6)"; "(15,6)"**\dir{.};
"(0,7)"; "(15,7)"**\dir{.};
"(0,8)"; "(15,8)"**\dir{.};
  "(0,9)"; "(15,9)"**\dir{.};
"(0,10)"; "(15,10)"**\dir{.};
"(0,11)"; "(15,11)"**\dir{.};
"(0,12)"; "(15,12)"**\dir{.};
"(0,13)"; "(15,13)"**\dir{.};
"(0,14)"; "(15,14)"**\dir{.};
"(0,15)"; "(15,15)"**\dir{.};
"(0,0)"; "(0,15)"**\dir{.};
"(1,0)"; "(1,15)"**\dir{.};
"(2,0)"; "(2,15)"**\dir{.};
"(3,0)"; "(3,15)"**\dir{.};
"(4,0)"; "(4,15)"**\dir{.};
"(5,0)"; "(5,15)"**\dir{.};
"(6,0)"; "(6,15)"**\dir{.};
"(7,0)"; "(7,15)"**\dir{.};
"(8,0)"; "(8,15)"**\dir{.};
"(9,0)"; "(9,15)"**\dir{.};
"(10,0)"; "(10,15)"**\dir{.};
"(11,0)"; "(11,15)"**\dir{.};
"(12,0)"; "(12,15)"**\dir{.};
"(13,0)"; "(13,15)"**\dir{.};
"(14,0)"; "(14,15)"**\dir{.};
"(15,0)"; "(15,15)"**\dir{.};
(0.5,0)="(0.5,0)";
(1.5,0)="(1.5,0)";
(2.5,0)="(2.5,0)";
(3.5,0)="(3.5,0)";
(4.5,0)="(4.5,0)";
(5.5,0)="(5.5,0)";
(6.5,0)="(6.5,0)";
(7.5,0)="(7.5,0)";
(8.5,0)="(8.5,0)";
(9.5,0)="(9.5,0)";
(10.5,0)="(10.5,0)";
(11.5,0)="(11.5,0)";
(12.5,0)="(12.5,0)";
(13.5,0)="(13.5,0)";
(14.5,0)="(14.5,0)";
(15,0.5)="(15,0.5)";
(15,1.5)="(15,1.5)";
(15,2.5)="(15,2.5)";
(15,3.5)="(15,3.5)";
(15,4.5)="(15,4.5)";
(15,5.5)="(15,5.5)";
(15,6.5)="(15,6.5)";
(15,7.5)="(15,7.5)";
(15,8.5)="(15,8.5)";
(15,9.5)="(15,9.5)";
(15,10.5)="(15,10.5)";
(15,11.5)="(15,11.5)";
(15,12.5)="(15,12.5)";
(15,13.5)="(15,13.5)";
(15,14.5)="(15,14.5)";
(2.5,14.5)*{\circ}="w1";
(1.5,13.5)*{\circ}="w2";
(0.5,12.5)*{\circ}="w3";
(4.5,11.5)*{\circ}="w4";
(3.5,10.5)*{\circ}="w5";
(5.5,9.5)*{\times}="w6";
(11.5,8.5)*{\circ}="w7";
(14.5,7.5)*{\circ}="w8";
(13.5,6.5)*{\circ}="w9";
(9.5,5.5)*{\circ}="w10";
(8.5,4.5)*{\circ}="w11";
(10.5,3.5)*{\circ}="w12";
(6.5,2.5)*{\circ}="w13";
(7.5,1.5)*{\circ}="w14";
(12.5,0.5)*{\circ}="w15";
"w1"; "(2.5,0)"**\dir{-};
"w1"; "(15,14.5)"**\dir{-};
"w2"; "(1.5,0)"**\dir{-};
"w2"; "(15,13.5)"**\dir{-};
"w3"; "(0.5,0)"**\dir{-};
"w3"; "(15,12.5)"**\dir{-};
"w4"; "(4.5,0)"**\dir{-};
"w4"; "(15,11.5)"**\dir{-};
"w5"; "(3.5,0)"**\dir{-};
"w5"; "(15,10.5)"**\dir{-};
"w6"; "(5.5,0)"**\dir{-};
"w6"; "(15,9.5)"**\dir{-};
"w7"; "(11.5,0)"**\dir{-};
"w7"; "(15,8.5)"**\dir{-};
"w8"; "(14.5,0)"**\dir{-};
"w8"; "(15,7.5)"**\dir{-};
"w9"; "(13.5,0)"**\dir{-};
"w9"; "(15,6.5)"**\dir{-};
"w10"; "(9.5,0)"**\dir{-};
"w10"; "(15,5.5)"**\dir{-};
"w11"; "(8.5,0)"**\dir{-};
"w11"; "(15,4.5)"**\dir{-};
"w12"; "(10.5,0)"**\dir{-};
"w12"; "(15,3.5)"**\dir{-};
"w13"; "(6.5,0)"**\dir{-};
"w13"; "(15,2.5)"**\dir{-};
"w14"; "(7.5,0)"**\dir{-};
"w14"; "(15,1.5)"**\dir{-};
"w15"; "(12.5,0)"**\dir{-};
"w15"; "(15,0.5)"**\dir{-};
\endxy$$
Then we apply MT-moves to both of them. Here is part of the tree. 
$$
\xy 0;/r.30pc/:
(-20,0)*{A_5}="a5";
(-20,10)*{A_4 (j=\underline{1},2,3)}="a4";
(-20,20)*{A_3 (j=\underline{4},5)}="a3";
(-20,32)*{A_2 (j=\underline{5})}="a2";
(-20,42)*{A_1 (j=\underline{4},5)}="a1";
 "a5"; "a4"**\dir{-};
 "a3"; "a4"**\dir{-};
  "a3"; "a2"**\dir{-};
   "a1"; "a2"**\dir{-};
(10,0)*{B_{52}\, (=A_5)}="b52";
(10,5)*{B_{51}\, (j=\underline{7})}="b51";
(10,10)*{B_{41} (j=\underline{1},2,3)}="b41";
(10,15)*{B_{33}\, (j=\underline{4},5)}="b33";
(10,20)*{B_{32} \,(j=\underline{6})}="b32";
(10,25)*{B_{31}\, (j=\underline{7})}="b31";
(10,30)*{B_{22}\, (j=\underline{5})}="b22";
(10,35)*{B_{21} \,(j=\underline{6})}="b21";
(10,40)*{B_{12}\, (j=\underline{4},5)}="b12";
(10,45)*{B_{11}\, (j=\underline{6})}="b11";
 "b11"; "b12"**\dir{-};
"b12"; "b21"**\dir{-};
"b22"; "b21"**\dir{-};
"b22"; "b31"**\dir{-};
"b32"; "b31"**\dir{-};
"b32"; "b33"**\dir{-};
"b41"; "b33"**\dir{-};
"b41"; "b51"**\dir{-};
"b52"; "b51"**\dir{-};
(0,-2)="1";(20,-2)="11";"1"; "11"**\dir{--};
(0,8)="2";(20,8)="22";"2"; "22"**\dir{--};
(0,13)="3";(20,13)="33";"3"; "33"**\dir{--};
(0,28)="4";(20,28)="44";"4"; "44"**\dir{--};
(0,38)="5";(20,38)="55";"5"; "55"**\dir{--};
(0,48)="6";(20,48)="66";"6"; "66"**\dir{--};
"6"; "1"**\dir{--};
"11"; "66"**\dir{--};
\endxy
$$
Even though the path of the $B_i$'s is longer, but eventually, it gets to the same permutation $B_{52}=A_5$. For example, consider $A_3$ and $B_{31}$. In $A_3$, the corner box is indicated by a bullet. While in $B_{31}$, the coresponding box is also indicated by a bullet. But it is not the corner box in $B_{31}$, since there are some delayed boxes to its right (indicated by the dots). So we need to do some cleaning (the move from $B_{31}$ to $B_{32}$, and then to $B_{33}$) before moving the same block as $A_3$. Finally, in $B_{33}$, we are ready to move the same block as $A_3$.
$$A_3:\,\,\xy 0;/r.5pc/:
(0,0)="(0,0)";
(1,0)="(1,0)";
(2,0)="(2,0)";
(3,0)="(3,0)";
(4,0)="(4,0)";
(5,0)="(5,0)";
(6,0)="(6,0)";
(7,0)="(7,0)";
(8,0)="(8,0)";
(9,0)="(9,0)";
(10,0)="(10,0)";
(11,0)="(11,0)";
(12,0)="(12,0)";
(13,0)="(13,0)";
(14,0)="(14,0)";
(1,14)="(1,14)";
(2,14)="(2,14)";
(3,14)="(3,14)";
(4,14)="(4,14)";
(5,14)="(5,14)";
(6,14)="(6,14)";
(7,14)="(7,14)";
(8,14)="(8,14)";
(9,14)="(9,14)";
(10,14)="(10,14)";
(11,14)="(11,14)";
(12,14)="(12,14)";
(13,14)="(13,14)";
(14,14)="(14,14)";
(0,1)="(0,1)";
(0,2)="(0,2)";
(0,3)="(0,3)";
(0,4)="(0,4)";
(0,5)="(0,5)";
(0,6)="(0,6)";
(0,7)="(0,7)";
(0,8)="(0,8)";
(0,9)="(0,9)";
(0,10)="(0,10)";
(0,11)="(0,11)";
(0,12)="(0,12)";
(0,13)="(0,13)";
(0,14)="(0,14)";
(14,1)="(14,1)";
(14,2)="(14,2)";
(14,3)="(14,3)";
(14,4)="(14,4)";
(14,5)="(14,5)";
(14,6)="(14,6)";
(14,7)="(14,7)";
(14,8)="(14,8)";
(14,9)="(14,9)";
(14,10)="(14,10)";
(14,11)="(14,11)";
(14,12)="(14,12)";
(14,13)="(14,13)";
(14,14)="(14,14)";
 "(0,0)"; "(14,0)"**\dir{.};
  "(0,1)"; "(14,1)"**\dir{.};
"(0,2)"; "(14,2)"**\dir{.};
"(0,3)"; "(14,3)"**\dir{.};
"(0,4)"; "(14,4)"**\dir{.};
"(0,5)"; "(14,5)"**\dir{.};
"(0,6)"; "(14,6)"**\dir{.};
"(0,7)"; "(14,7)"**\dir{.};
"(0,8)"; "(14,8)"**\dir{.};
  "(0,9)"; "(14,9)"**\dir{.};
"(0,10)"; "(14,10)"**\dir{.};
"(0,11)"; "(14,11)"**\dir{.};
"(0,12)"; "(14,12)"**\dir{.};
"(0,13)"; "(14,13)"**\dir{.};
"(0,14)"; "(14,14)"**\dir{.};
"(0,0)"; "(0,14)"**\dir{.};
"(1,0)"; "(1,14)"**\dir{.};
"(2,0)"; "(2,14)"**\dir{.};
"(3,0)"; "(3,14)"**\dir{.};
"(4,0)"; "(4,14)"**\dir{.};
"(5,0)"; "(5,14)"**\dir{.};
"(6,0)"; "(6,14)"**\dir{.};
"(7,0)"; "(7,14)"**\dir{.};
"(8,0)"; "(8,14)"**\dir{.};
"(9,0)"; "(9,14)"**\dir{.};
"(10,0)"; "(10,14)"**\dir{.};
"(11,0)"; "(11,14)"**\dir{.};
"(12,0)"; "(12,14)"**\dir{.};
"(13,0)"; "(13,14)"**\dir{.};
"(14,0)"; "(14,14)"**\dir{.};
(0.5,0)="(0.5,0)";
(1.5,0)="(1.5,0)";
(2.5,0)="(2.5,0)";
(3.5,0)="(3.5,0)";
(4.5,0)="(4.5,0)";
(5.5,0)="(5.5,0)";
(6.5,0)="(6.5,0)";
(7.5,0)="(7.5,0)";
(8.5,0)="(8.5,0)";
(9.5,0)="(9.5,0)";
(10.5,0)="(10.5,0)";
(11.5,0)="(11.5,0)";
(12.5,0)="(12.5,0)";
(13.5,0)="(13.5,0)";
(14,0.5)="(14,0.5)";
(14,1.5)="(14,1.5)";
(14,2.5)="(14,2.5)";
(14,3.5)="(14,3.5)";
(14,4.5)="(14,4.5)";
(14,5.5)="(14,5.5)";
(14,6.5)="(14,6.5)";
(14,7.5)="(14,7.5)";
(14,8.5)="(14,8.5)";
(14,9.5)="(14,9.5)";
(14,10.5)="(14,10.5)";
(14,11.5)="(14,11.5)";
(14,12.5)="(14,12.5)";
(14,13.5)="(14,13.5)";
(2.5,13.5)*{\circ}="w1";
(1.5,12.5)*{\circ}="w2";
(0.5,11.5)*{\circ}="w3";
(6.5,10.5)*{\circ}="w4";
(5.5,9.5)*{\circ}="w5";
(10.5,8.5)*{\circ}="w6";
(13.5,7.5)*{\circ}="w7";
(12.5,6.5)*{\circ}="w8";
(8.5,5.5)*{\circ}="w9";
(3.5,4.5)*{\circ}="w10";
(4.5,3.5)*{\circ}="w11";
(7.5,2.5)*{\circ}="w12";
(9.5,1.5)*{\circ}="w13";
(11.5,0.5)*{\circ}="w14";
"w1"; "(2.5,0)"**\dir{-};
"w1"; "(14,13.5)"**\dir{-};
"w2"; "(1.5,0)"**\dir{-};
"w2"; "(14,12.5)"**\dir{-};
"w3"; "(0.5,0)"**\dir{-};
"w3"; "(14,11.5)"**\dir{-};
"w4"; "(6.5,0)"**\dir{-};
"w4"; "(14,10.5)"**\dir{-};
"w5"; "(5.5,0)"**\dir{-};
"w5"; "(14,9.5)"**\dir{-};
"w6"; "(10.5,0)"**\dir{-};
"w6"; "(14,8.5)"**\dir{-};
"w7"; "(13.5,0)"**\dir{-};
"w7"; "(14,7.5)"**\dir{-};
"w8"; "(12.5,0)"**\dir{-};
"w8"; "(14,6.5)"**\dir{-};
"w9"; "(8.5,0)"**\dir{-};
"w9"; "(14,5.5)"**\dir{-};
"w10"; "(3.5,0)"**\dir{-};
"w10"; "(14,4.5)"**\dir{-};
"w11"; "(4.5,0)"**\dir{-};
"w11"; "(14,3.5)"**\dir{-};
"w12"; "(7.5,0)"**\dir{-};
"w12"; "(14,2.5)"**\dir{-};
"w13"; "(9.5,0)"**\dir{-};
"w13"; "(14,1.5)"**\dir{-};
"w14"; "(11.5,0)"**\dir{-};
"w14"; "(14,0.5)"**\dir{-};
(7.5,5.5)*{\bullet};
\endxy
\,\,\,  \,\, B_{31}:
\xy 0;/r.5pc/:
(0,0)="(0,0)";
(1,0)="(1,0)";
(2,0)="(2,0)";
(3,0)="(3,0)";
(4,0)="(4,0)";
(5,0)="(5,0)";
(6,0)="(6,0)";
(7,0)="(7,0)";
(8,0)="(8,0)";
(9,0)="(9,0)";
(10,0)="(10,0)";
(11,0)="(11,0)";
(12,0)="(12,0)";
(13,0)="(13,0)";
(14,0)="(14,0)";
(15,0)="(15,0)";
(1,15)="(1,15)";
(2,15)="(2,15)";
(3,15)="(3,15)";
(4,15)="(4,15)";
(5,15)="(5,15)";
(6,15)="(6,15)";
(7,15)="(7,15)";
(8,15)="(8,15)";
(9,15)="(9,15)";
(10,15)="(10,15)";
(11,15)="(11,15)";
(12,15)="(12,15)";
(13,15)="(13,15)";
(14,15)="(14,15)";
(15,15)="(15,15)";
(0,1)="(0,1)";
(0,2)="(0,2)";
(0,3)="(0,3)";
(0,4)="(0,4)";
(0,5)="(0,5)";
(0,6)="(0,6)";
(0,7)="(0,7)";
(0,8)="(0,8)";
(0,9)="(0,9)";
(0,10)="(0,10)";
(0,11)="(0,11)";
(0,12)="(0,12)";
(0,13)="(0,13)";
(0,14)="(0,14)";
(0,15)="(0,15)";
(15,1)="(15,1)";
(15,2)="(15,2)";
(15,3)="(15,3)";
(15,4)="(15,4)";
(15,5)="(15,5)";
(15,6)="(15,6)";
(15,7)="(15,7)";
(15,8)="(15,8)";
(15,9)="(15,9)";
(15,10)="(15,10)";
(15,11)="(15,11)";
(15,12)="(15,12)";
(15,13)="(15,13)";
(15,14)="(15,14)";
 "(0,0)"; "(15,0)"**\dir{.};
  "(0,1)"; "(15,1)"**\dir{.};
"(0,2)"; "(15,2)"**\dir{.};
"(0,3)"; "(15,3)"**\dir{.};
"(0,4)"; "(15,4)"**\dir{.};
"(0,5)"; "(15,5)"**\dir{.};
"(0,6)"; "(15,6)"**\dir{.};
"(0,7)"; "(15,7)"**\dir{.};
"(0,8)"; "(15,8)"**\dir{.};
  "(0,9)"; "(15,9)"**\dir{.};
"(0,10)"; "(15,10)"**\dir{.};
"(0,11)"; "(15,11)"**\dir{.};
"(0,12)"; "(15,12)"**\dir{.};
"(0,13)"; "(15,13)"**\dir{.};
"(0,14)"; "(15,14)"**\dir{.};
"(0,15)"; "(15,15)"**\dir{.};
"(0,0)"; "(0,15)"**\dir{.};
"(1,0)"; "(1,15)"**\dir{.};
"(2,0)"; "(2,15)"**\dir{.};
"(3,0)"; "(3,15)"**\dir{.};
"(4,0)"; "(4,15)"**\dir{.};
"(5,0)"; "(5,15)"**\dir{.};
"(6,0)"; "(6,15)"**\dir{.};
"(7,0)"; "(7,15)"**\dir{.};
"(8,0)"; "(8,15)"**\dir{.};
"(9,0)"; "(9,15)"**\dir{.};
"(10,0)"; "(10,15)"**\dir{.};
"(11,0)"; "(11,15)"**\dir{.};
"(12,0)"; "(12,15)"**\dir{.};
"(13,0)"; "(13,15)"**\dir{.};
"(14,0)"; "(14,15)"**\dir{.};
"(15,0)"; "(15,15)"**\dir{.};
(0.5,0)="(0.5,0)";
(1.5,0)="(1.5,0)";
(2.5,0)="(2.5,0)";
(3.5,0)="(3.5,0)";
(4.5,0)="(4.5,0)";
(5.5,0)="(5.5,0)";
(6.5,0)="(6.5,0)";
(7.5,0)="(7.5,0)";
(8.5,0)="(8.5,0)";
(9.5,0)="(9.5,0)";
(10.5,0)="(10.5,0)";
(11.5,0)="(11.5,0)";
(12.5,0)="(12.5,0)";
(13.5,0)="(13.5,0)";
(14.5,0)="(14.5,0)";
(15,0.5)="(15,0.5)";
(15,1.5)="(15,1.5)";
(15,2.5)="(15,2.5)";
(15,3.5)="(15,3.5)";
(15,4.5)="(15,4.5)";
(15,5.5)="(15,5.5)";
(15,6.5)="(15,6.5)";
(15,7.5)="(15,7.5)";
(15,8.5)="(15,8.5)";
(15,9.5)="(15,9.5)";
(15,10.5)="(15,10.5)";
(15,11.5)="(15,11.5)";
(15,12.5)="(15,12.5)";
(15,13.5)="(15,13.5)";
(15,14.5)="(15,14.5)";
(2.5,14.5)*{\circ}="w1";
(1.5,13.5)*{\circ}="w2";
(0.5,12.5)*{\circ}="w3";
(6.5,11.5)*{\circ}="w4";
(5.5,10.5)*{\circ}="w5";
(8.5,9.5)*{\circ}="w6";
(11.5,8.5)*{\circ}="w7";
(14.5,7.5)*{\circ}="w8";
(13.5,6.5)*{\circ}="w9";
(3.5,5.5)*{\circ}="w10";
(4.5,4.5)*{\circ}="w11";
(7.5,3.5)*{\circ}="w12";
(9.5,2.5)*{\circ}="w13";
(10.5,1.5)*{\circ}="w14";
(12.5,0.5)*{\circ}="w15";
"w1"; "(2.5,0)"**\dir{-};
"w1"; "(15,14.5)"**\dir{-};
"w2"; "(1.5,0)"**\dir{-};
"w2"; "(15,13.5)"**\dir{-};
"w3"; "(0.5,0)"**\dir{-};
"w3"; "(15,12.5)"**\dir{-};
"w4"; "(6.5,0)"**\dir{-};
"w4"; "(15,11.5)"**\dir{-};
"w5"; "(5.5,0)"**\dir{-};
"w5"; "(15,10.5)"**\dir{-};
"w6"; "(8.5,0)"**\dir{-};
"w6"; "(15,9.5)"**\dir{-};
"w7"; "(11.5,0)"**\dir{-};
"w7"; "(15,8.5)"**\dir{-};
"w8"; "(14.5,0)"**\dir{-};
"w8"; "(15,7.5)"**\dir{-};
"w9"; "(13.5,0)"**\dir{-};
"w9"; "(15,6.5)"**\dir{-};
"w10"; "(3.5,0)"**\dir{-};
"w10"; "(15,5.5)"**\dir{-};
"w11"; "(4.5,0)"**\dir{-};
"w11"; "(15,4.5)"**\dir{-};
"w12"; "(7.5,0)"**\dir{-};
"w12"; "(15,3.5)"**\dir{-};
"w13"; "(9.5,0)"**\dir{-};
"w13"; "(15,2.5)"**\dir{-};
"w14"; "(10.5,0)"**\dir{-};
"w14"; "(15,1.5)"**\dir{-};
"w15"; "(12.5,0)"**\dir{-};
"w15"; "(15,0.5)"**\dir{-};
(7.5,6.5)*{\bullet};
(9.5,6.5)*{\cdot};(10.5,6.5)*{\cdot};(12.5,6.5)*{\cdot};
(9.5,7.5)*{\cdot};(10.5,7.5)*{\cdot};(12.5,7.5)*{\cdot};(13.5,7.5)*{\cdot};
(9.5,8.5)*{\cdot};(10.5,8.5)*{\cdot};
\endxy$$
$$B_{32} \xy 0;/r.5pc/:
(0,0)="(0,0)";
(1,0)="(1,0)";
(2,0)="(2,0)";
(3,0)="(3,0)";
(4,0)="(4,0)";
(5,0)="(5,0)";
(6,0)="(6,0)";
(7,0)="(7,0)";
(8,0)="(8,0)";
(9,0)="(9,0)";
(10,0)="(10,0)";
(11,0)="(11,0)";
(12,0)="(12,0)";
(13,0)="(13,0)";
(14,0)="(14,0)";
(15,0)="(15,0)";
(1,15)="(1,15)";
(2,15)="(2,15)";
(3,15)="(3,15)";
(4,15)="(4,15)";
(5,15)="(5,15)";
(6,15)="(6,15)";
(7,15)="(7,15)";
(8,15)="(8,15)";
(9,15)="(9,15)";
(10,15)="(10,15)";
(11,15)="(11,15)";
(12,15)="(12,15)";
(13,15)="(13,15)";
(14,15)="(14,15)";
(15,15)="(15,15)";
(0,1)="(0,1)";
(0,2)="(0,2)";
(0,3)="(0,3)";
(0,4)="(0,4)";
(0,5)="(0,5)";
(0,6)="(0,6)";
(0,7)="(0,7)";
(0,8)="(0,8)";
(0,9)="(0,9)";
(0,10)="(0,10)";
(0,11)="(0,11)";
(0,12)="(0,12)";
(0,13)="(0,13)";
(0,14)="(0,14)";
(0,15)="(0,15)";
(15,1)="(15,1)";
(15,2)="(15,2)";
(15,3)="(15,3)";
(15,4)="(15,4)";
(15,5)="(15,5)";
(15,6)="(15,6)";
(15,7)="(15,7)";
(15,8)="(15,8)";
(15,9)="(15,9)";
(15,10)="(15,10)";
(15,11)="(15,11)";
(15,12)="(15,12)";
(15,13)="(15,13)";
(15,14)="(15,14)";
 "(0,0)"; "(15,0)"**\dir{.};
  "(0,1)"; "(15,1)"**\dir{.};
"(0,2)"; "(15,2)"**\dir{.};
"(0,3)"; "(15,3)"**\dir{.};
"(0,4)"; "(15,4)"**\dir{.};
"(0,5)"; "(15,5)"**\dir{.};
"(0,6)"; "(15,6)"**\dir{.};
"(0,7)"; "(15,7)"**\dir{.};
"(0,8)"; "(15,8)"**\dir{.};
  "(0,9)"; "(15,9)"**\dir{.};
"(0,10)"; "(15,10)"**\dir{.};
"(0,11)"; "(15,11)"**\dir{.};
"(0,12)"; "(15,12)"**\dir{.};
"(0,13)"; "(15,13)"**\dir{.};
"(0,14)"; "(15,14)"**\dir{.};
"(0,15)"; "(15,15)"**\dir{.};
"(0,0)"; "(0,15)"**\dir{.};
"(1,0)"; "(1,15)"**\dir{.};
"(2,0)"; "(2,15)"**\dir{.};
"(3,0)"; "(3,15)"**\dir{.};
"(4,0)"; "(4,15)"**\dir{.};
"(5,0)"; "(5,15)"**\dir{.};
"(6,0)"; "(6,15)"**\dir{.};
"(7,0)"; "(7,15)"**\dir{.};
"(8,0)"; "(8,15)"**\dir{.};
"(9,0)"; "(9,15)"**\dir{.};
"(10,0)"; "(10,15)"**\dir{.};
"(11,0)"; "(11,15)"**\dir{.};
"(12,0)"; "(12,15)"**\dir{.};
"(13,0)"; "(13,15)"**\dir{.};
"(14,0)"; "(14,15)"**\dir{.};
"(15,0)"; "(15,15)"**\dir{.};
(0.5,0)="(0.5,0)";
(1.5,0)="(1.5,0)";
(2.5,0)="(2.5,0)";
(3.5,0)="(3.5,0)";
(4.5,0)="(4.5,0)";
(5.5,0)="(5.5,0)";
(6.5,0)="(6.5,0)";
(7.5,0)="(7.5,0)";
(8.5,0)="(8.5,0)";
(9.5,0)="(9.5,0)";
(10.5,0)="(10.5,0)";
(11.5,0)="(11.5,0)";
(12.5,0)="(12.5,0)";
(13.5,0)="(13.5,0)";
(14.5,0)="(14.5,0)";
(15,0.5)="(15,0.5)";
(15,1.5)="(15,1.5)";
(15,2.5)="(15,2.5)";
(15,3.5)="(15,3.5)";
(15,4.5)="(15,4.5)";
(15,5.5)="(15,5.5)";
(15,6.5)="(15,6.5)";
(15,7.5)="(15,7.5)";
(15,8.5)="(15,8.5)";
(15,9.5)="(15,9.5)";
(15,10.5)="(15,10.5)";
(15,11.5)="(15,11.5)";
(15,12.5)="(15,12.5)";
(15,13.5)="(15,13.5)";
(15,14.5)="(15,14.5)";
(2.5,14.5)*{\circ}="w1";
(1.5,13.5)*{\circ}="w2";
(0.5,12.5)*{\circ}="w3";
(6.5,11.5)*{\circ}="w4";
(5.5,10.5)*{\circ}="w5";
(8.5,9.5)*{\circ}="w6";
(12.5,8.5)*{\circ}="w7";
(14.5,7.5)*{\circ}="w8";
(11.5,6.5)*{\circ}="w9";
(3.5,5.5)*{\circ}="w10";
(4.5,4.5)*{\circ}="w11";
(7.5,3.5)*{\circ}="w12";
(9.5,2.5)*{\circ}="w13";
(10.5,1.5)*{\circ}="w14";
(13.5,0.5)*{\circ}="w15";
"w1"; "(2.5,0)"**\dir{-};
"w1"; "(15,14.5)"**\dir{-};
"w2"; "(1.5,0)"**\dir{-};
"w2"; "(15,13.5)"**\dir{-};
"w3"; "(0.5,0)"**\dir{-};
"w3"; "(15,12.5)"**\dir{-};
"w4"; "(6.5,0)"**\dir{-};
"w4"; "(15,11.5)"**\dir{-};
"w5"; "(5.5,0)"**\dir{-};
"w5"; "(15,10.5)"**\dir{-};
"w6"; "(8.5,0)"**\dir{-};
"w6"; "(15,9.5)"**\dir{-};
"w7"; "(12.5,0)"**\dir{-};
"w7"; "(15,8.5)"**\dir{-};
"w8"; "(14.5,0)"**\dir{-};
"w8"; "(15,7.5)"**\dir{-};
"w9"; "(11.5,0)"**\dir{-};
"w9"; "(15,6.5)"**\dir{-};
"w10"; "(3.5,0)"**\dir{-};
"w10"; "(15,5.5)"**\dir{-};
"w11"; "(4.5,0)"**\dir{-};
"w11"; "(15,4.5)"**\dir{-};
"w12"; "(7.5,0)"**\dir{-};
"w12"; "(15,3.5)"**\dir{-};
"w13"; "(9.5,0)"**\dir{-};
"w13"; "(15,2.5)"**\dir{-};
"w14"; "(10.5,0)"**\dir{-};
"w14"; "(15,1.5)"**\dir{-};
"w15"; "(13.5,0)"**\dir{-};
"w15"; "(15,0.5)"**\dir{-};
(7.5,6.5)*{\bullet};
(9.5,6.5)*{\cdot};(10.5,6.5)*{\cdot};
(9.5,7.5)*{\cdot};(10.5,7.5)*{\cdot};(13.5,7.5)*{\cdot};
(9.5,8.5)*{\cdot};(10.5,8.5)*{\cdot};
\endxy
\, B_{33}: \xy 0;/r.5pc/:
(0,0)="(0,0)";
(1,0)="(1,0)";
(2,0)="(2,0)";
(3,0)="(3,0)";
(4,0)="(4,0)";
(5,0)="(5,0)";
(6,0)="(6,0)";
(7,0)="(7,0)";
(8,0)="(8,0)";
(9,0)="(9,0)";
(10,0)="(10,0)";
(11,0)="(11,0)";
(12,0)="(12,0)";
(13,0)="(13,0)";
(14,0)="(14,0)";
(15,0)="(15,0)";
(1,15)="(1,15)";
(2,15)="(2,15)";
(3,15)="(3,15)";
(4,15)="(4,15)";
(5,15)="(5,15)";
(6,15)="(6,15)";
(7,15)="(7,15)";
(8,15)="(8,15)";
(9,15)="(9,15)";
(10,15)="(10,15)";
(11,15)="(11,15)";
(12,15)="(12,15)";
(13,15)="(13,15)";
(14,15)="(14,15)";
(15,15)="(15,15)";
(0,1)="(0,1)";
(0,2)="(0,2)";
(0,3)="(0,3)";
(0,4)="(0,4)";
(0,5)="(0,5)";
(0,6)="(0,6)";
(0,7)="(0,7)";
(0,8)="(0,8)";
(0,9)="(0,9)";
(0,10)="(0,10)";
(0,11)="(0,11)";
(0,12)="(0,12)";
(0,13)="(0,13)";
(0,14)="(0,14)";
(0,15)="(0,15)";
(15,1)="(15,1)";
(15,2)="(15,2)";
(15,3)="(15,3)";
(15,4)="(15,4)";
(15,5)="(15,5)";
(15,6)="(15,6)";
(15,7)="(15,7)";
(15,8)="(15,8)";
(15,9)="(15,9)";
(15,10)="(15,10)";
(15,11)="(15,11)";
(15,12)="(15,12)";
(15,13)="(15,13)";
(15,14)="(15,14)";
 "(0,0)"; "(15,0)"**\dir{.};
  "(0,1)"; "(15,1)"**\dir{.};
"(0,2)"; "(15,2)"**\dir{.};
"(0,3)"; "(15,3)"**\dir{.};
"(0,4)"; "(15,4)"**\dir{.};
"(0,5)"; "(15,5)"**\dir{.};
"(0,6)"; "(15,6)"**\dir{.};
"(0,7)"; "(15,7)"**\dir{.};
"(0,8)"; "(15,8)"**\dir{.};
  "(0,9)"; "(15,9)"**\dir{.};
"(0,10)"; "(15,10)"**\dir{.};
"(0,11)"; "(15,11)"**\dir{.};
"(0,12)"; "(15,12)"**\dir{.};
"(0,13)"; "(15,13)"**\dir{.};
"(0,14)"; "(15,14)"**\dir{.};
"(0,15)"; "(15,15)"**\dir{.};
"(0,0)"; "(0,15)"**\dir{.};
"(1,0)"; "(1,15)"**\dir{.};
"(2,0)"; "(2,15)"**\dir{.};
"(3,0)"; "(3,15)"**\dir{.};
"(4,0)"; "(4,15)"**\dir{.};
"(5,0)"; "(5,15)"**\dir{.};
"(6,0)"; "(6,15)"**\dir{.};
"(7,0)"; "(7,15)"**\dir{.};
"(8,0)"; "(8,15)"**\dir{.};
"(9,0)"; "(9,15)"**\dir{.};
"(10,0)"; "(10,15)"**\dir{.};
"(11,0)"; "(11,15)"**\dir{.};
"(12,0)"; "(12,15)"**\dir{.};
"(13,0)"; "(13,15)"**\dir{.};
"(14,0)"; "(14,15)"**\dir{.};
"(15,0)"; "(15,15)"**\dir{.};
(0.5,0)="(0.5,0)";
(1.5,0)="(1.5,0)";
(2.5,0)="(2.5,0)";
(3.5,0)="(3.5,0)";
(4.5,0)="(4.5,0)";
(5.5,0)="(5.5,0)";
(6.5,0)="(6.5,0)";
(7.5,0)="(7.5,0)";
(8.5,0)="(8.5,0)";
(9.5,0)="(9.5,0)";
(10.5,0)="(10.5,0)";
(11.5,0)="(11.5,0)";
(12.5,0)="(12.5,0)";
(13.5,0)="(13.5,0)";
(14.5,0)="(14.5,0)";
(15,0.5)="(15,0.5)";
(15,1.5)="(15,1.5)";
(15,2.5)="(15,2.5)";
(15,3.5)="(15,3.5)";
(15,4.5)="(15,4.5)";
(15,5.5)="(15,5.5)";
(15,6.5)="(15,6.5)";
(15,7.5)="(15,7.5)";
(15,8.5)="(15,8.5)";
(15,9.5)="(15,9.5)";
(15,10.5)="(15,10.5)";
(15,11.5)="(15,11.5)";
(15,12.5)="(15,12.5)";
(15,13.5)="(15,13.5)";
(15,14.5)="(15,14.5)";
(2.5,14.5)*{\circ}="w1";
(1.5,13.5)*{\circ}="w2";
(0.5,12.5)*{\circ}="w3";
(6.5,11.5)*{\circ}="w4";
(5.5,10.5)*{\circ}="w5";
(10.5,9.5)*{\circ}="w6";
(12.5,8.5)*{\circ}="w7";
(14.5,7.5)*{\circ}="w8";
(8.5,6.5)*{\circ}="w9";
(3.5,5.5)*{\circ}="w10";
(4.5,4.5)*{\circ}="w11";
(7.5,3.5)*{\circ}="w12";
(9.5,2.5)*{\circ}="w13";
(11.5,1.5)*{\circ}="w14";
(13.5,0.5)*{\circ}="w15";
"w1"; "(2.5,0)"**\dir{-};
"w1"; "(15,14.5)"**\dir{-};
"w2"; "(1.5,0)"**\dir{-};
"w2"; "(15,13.5)"**\dir{-};
"w3"; "(0.5,0)"**\dir{-};
"w3"; "(15,12.5)"**\dir{-};
"w4"; "(6.5,0)"**\dir{-};
"w4"; "(15,11.5)"**\dir{-};
"w5"; "(5.5,0)"**\dir{-};
"w5"; "(15,10.5)"**\dir{-};
"w6"; "(10.5,0)"**\dir{-};
"w6"; "(15,9.5)"**\dir{-};
"w7"; "(12.5,0)"**\dir{-};
"w7"; "(15,8.5)"**\dir{-};
"w8"; "(14.5,0)"**\dir{-};
"w8"; "(15,7.5)"**\dir{-};
"w9"; "(8.5,0)"**\dir{-};
"w9"; "(15,6.5)"**\dir{-};
"w10"; "(3.5,0)"**\dir{-};
"w10"; "(15,5.5)"**\dir{-};
"w11"; "(4.5,0)"**\dir{-};
"w11"; "(15,4.5)"**\dir{-};
"w12"; "(7.5,0)"**\dir{-};
"w12"; "(15,3.5)"**\dir{-};
"w13"; "(9.5,0)"**\dir{-};
"w13"; "(15,2.5)"**\dir{-};
"w14"; "(11.5,0)"**\dir{-};
"w14"; "(15,1.5)"**\dir{-};
"w15"; "(13.5,0)"**\dir{-};
"w15"; "(15,0.5)"**\dir{-};
(7.5,6.5)*{\bullet};
(13.5,7.5)*{\cdot};
\endxy$$

\end{example}

\begin{proof}[proof of Lemma \ref{gap}] Notice that $v^{-1}(1)-1$ is the number of boxes in the first column of $D(v)$. Consider again $w$ and $S(w)$ shown in Figure \ref{branching}.  Notice that applying different $j\in J(w)$ may result in different numbers $b(j)$ of potential boxes to be added to the first column. For example, for $j=2$, there is one box left, and for $j=3$, there are two boxes left (and are already added). The set $b=\{b(j)\mid j\in J(w)\}=[1,2]$ is an interval without any gaps. Using the diagram interpretation of the MT-move we can show that this holds in general, which implies this lemma.
\end{proof}
\begin{figure}[htp]
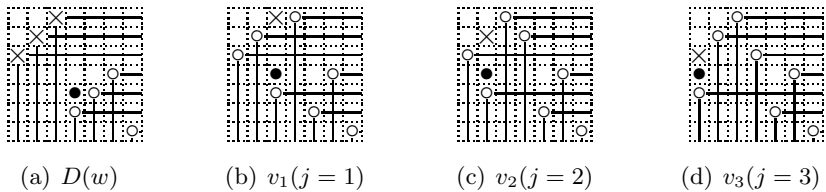

  \begin{center}
    \subfigure[$D(w)$]{\label{anc}$\,\,\,\,\,\,\,$ $\xy 0;/r.6pc/:
(0,0)="(0,0)";
(1,0)="(1,0)";
(2,0)="(2,0)";
(3,0)="(3,0)";
(4,0)="(4,0)";
(5,0)="(5,0)";
(6,0)="(6,0)";
(7,0)="(7,0)";
(1,7)="(1,7)";
(2,7)="(2,7)";
(3,7)="(3,7)";
(4,7)="(4,7)";
(5,7)="(5,7)";
(6,7)="(6,7)";
(7,7)="(7,7)";
(0,1)="(0,1)";
(0,2)="(0,2)";
(0,3)="(0,3)";
(0,4)="(0,4)";
(0,5)="(0,5)";
(0,6)="(0,6)";
(0,7)="(0,7)";
(7,1)="(7,1)";
(7,2)="(7,2)";
(7,3)="(7,3)";
(7,4)="(7,4)";
(7,5)="(7,5)";
(7,6)="(7,6)";
(7,7)="(7,7)";
 "(0,0)"; "(7,0)"**\dir{.};
  "(0,1)"; "(7,1)"**\dir{.};
"(0,2)"; "(7,2)"**\dir{.};
"(0,3)"; "(7,3)"**\dir{.};
"(0,4)"; "(7,4)"**\dir{.};
"(0,5)"; "(7,5)"**\dir{.};
"(0,6)"; "(7,6)"**\dir{.};
"(0,7)"; "(7,7)"**\dir{.};
"(0,0)"; "(0,7)"**\dir{.};
"(1,0)"; "(1,7)"**\dir{.};
"(2,0)"; "(2,7)"**\dir{.};
"(3,0)"; "(3,7)"**\dir{.};
"(4,0)"; "(4,7)"**\dir{.};
"(5,0)"; "(5,7)"**\dir{.};
"(6,0)"; "(6,7)"**\dir{.};
"(7,0)"; "(7,7)"**\dir{.};
(0.5,0)="(0.5,0)";
(1.5,0)="(1.5,0)";
(2.5,0)="(2.5,0)";
(3.5,0)="(3.5,0)";
(4.5,0)="(4.5,0)";
(5.5,0)="(5.5,0)";
(6.5,0)="(6.5,0)";
(7,0.5)="(7,0.5)";
(7,1.5)="(7,1.5)";
(7,2.5)="(7,2.5)";
(7,3.5)="(7,3.5)";
(7,4.5)="(7,4.5)";
(7,5.5)="(7,5.5)";
(7,6.5)="(7,6.5)";
(2.5,6.5)*{\times}="w1";
(1.5,5.5)*{\times}="w2";
(0.5,4.5)*{\times}="w3";
(5.5,3.5)*{\circ}="w4";
(4.5,2.5)*{\circ}="w5";
(3.5,1.5)*{\circ}="w6";
(6.5,0.5)*{\circ}="w7";
"w1"; "(2.5,0)"**\dir{-};
"w1"; "(7,6.5)"**\dir{-};
"w2"; "(1.5,0)"**\dir{-};
"w2"; "(7,5.5)"**\dir{-};
"w3"; "(0.5,0)"**\dir{-};
"w3"; "(7,4.5)"**\dir{-};
"w4"; "(5.5,0)"**\dir{-};
"w4"; "(7,3.5)"**\dir{-};
"w5"; "(4.5,0)"**\dir{-};
"w5"; "(7,2.5)"**\dir{-};
"w6"; "(3.5,0)"**\dir{-};
"w6"; "(7,1.5)"**\dir{-};
"w7"; "(6.5,0)"**\dir{-};
"w7"; "(7,0.5)"**\dir{-};
(3.5,2.5)*{\bullet};
\endxy$$\,\,\,\,\,\,\,$}
\subfigure[$v_1 (j=1)$]{\label{j1}$\,\,\,\,\,\,\,$ $\xy 0;/r.6pc/:
(0,0)="(0,0)";
(1,0)="(1,0)";
(2,0)="(2,0)";
(3,0)="(3,0)";
(4,0)="(4,0)";
(5,0)="(5,0)";
(6,0)="(6,0)";
(7,0)="(7,0)";
(1,7)="(1,7)";
(2,7)="(2,7)";
(3,7)="(3,7)";
(4,7)="(4,7)";
(5,7)="(5,7)";
(6,7)="(6,7)";
(7,7)="(7,7)";
(0,1)="(0,1)";
(0,2)="(0,2)";
(0,3)="(0,3)";
(0,4)="(0,4)";
(0,5)="(0,5)";
(0,6)="(0,6)";
(0,7)="(0,7)";
(7,1)="(7,1)";
(7,2)="(7,2)";
(7,3)="(7,3)";
(7,4)="(7,4)";
(7,5)="(7,5)";
(7,6)="(7,6)";
(7,7)="(7,7)";
 "(0,0)"; "(7,0)"**\dir{.};
  "(0,1)"; "(7,1)"**\dir{.};
"(0,2)"; "(7,2)"**\dir{.};
"(0,3)"; "(7,3)"**\dir{.};
"(0,4)"; "(7,4)"**\dir{.};
"(0,5)"; "(7,5)"**\dir{.};
"(0,6)"; "(7,6)"**\dir{.};
"(0,7)"; "(7,7)"**\dir{.};
"(0,0)"; "(0,7)"**\dir{.};
"(1,0)"; "(1,7)"**\dir{.};
"(2,0)"; "(2,7)"**\dir{.};
"(3,0)"; "(3,7)"**\dir{.};
"(4,0)"; "(4,7)"**\dir{.};
"(5,0)"; "(5,7)"**\dir{.};
"(6,0)"; "(6,7)"**\dir{.};
"(7,0)"; "(7,7)"**\dir{.};
(0.5,0)="(0.5,0)";
(1.5,0)="(1.5,0)";
(2.5,0)="(2.5,0)";
(3.5,0)="(3.5,0)";
(4.5,0)="(4.5,0)";
(5.5,0)="(5.5,0)";
(6.5,0)="(6.5,0)";
(7,0.5)="(7,0.5)";
(7,1.5)="(7,1.5)";
(7,2.5)="(7,2.5)";
(7,3.5)="(7,3.5)";
(7,4.5)="(7,4.5)";
(7,5.5)="(7,5.5)";
(7,6.5)="(7,6.5)";
(3.5,6.5)*{\circ}="w1";
(1.5,5.5)*{\circ}="w2";
(0.5,4.5)*{\circ}="w3";
(5.5,3.5)*{\circ}="w4";
(2.5,2.5)*{\circ}="w5";
(4.5,1.5)*{\circ}="w6";
(6.5,0.5)*{\circ}="w7";
"w1"; "(3.5,0)"**\dir{-};
"w1"; "(7,6.5)"**\dir{-};
"w2"; "(1.5,0)"**\dir{-};
"w2"; "(7,5.5)"**\dir{-};
"w3"; "(0.5,0)"**\dir{-};
"w3"; "(7,4.5)"**\dir{-};
"w4"; "(5.5,0)"**\dir{-};
"w4"; "(7,3.5)"**\dir{-};
"w5"; "(2.5,0)"**\dir{-};
"w5"; "(7,2.5)"**\dir{-};
"w6"; "(4.5,0)"**\dir{-};
"w6"; "(7,1.5)"**\dir{-};
"w7"; "(6.5,0)"**\dir{-};
"w7"; "(7,0.5)"**\dir{-};
(2.5,6.5)*{\times};
(2.5,3.5)*{\bullet};
\endxy$ $\,\,\,\,\,\,\,$}
    \subfigure[$v_2 (j=2)$]{\label{j2}$\,\,\,\,\,\,\,$ $\xy 0;/r.6pc/:
(0,0)="(0,0)";
(1,0)="(1,0)";
(2,0)="(2,0)";
(3,0)="(3,0)";
(4,0)="(4,0)";
(5,0)="(5,0)";
(6,0)="(6,0)";
(7,0)="(7,0)";
(1,7)="(1,7)";
(2,7)="(2,7)";
(3,7)="(3,7)";
(4,7)="(4,7)";
(5,7)="(5,7)";
(6,7)="(6,7)";
(7,7)="(7,7)";
(0,1)="(0,1)";
(0,2)="(0,2)";
(0,3)="(0,3)";
(0,4)="(0,4)";
(0,5)="(0,5)";
(0,6)="(0,6)";
(0,7)="(0,7)";
(7,1)="(7,1)";
(7,2)="(7,2)";
(7,3)="(7,3)";
(7,4)="(7,4)";
(7,5)="(7,5)";
(7,6)="(7,6)";
(7,7)="(7,7)";
 "(0,0)"; "(7,0)"**\dir{.};
  "(0,1)"; "(7,1)"**\dir{.};
"(0,2)"; "(7,2)"**\dir{.};
"(0,3)"; "(7,3)"**\dir{.};
"(0,4)"; "(7,4)"**\dir{.};
"(0,5)"; "(7,5)"**\dir{.};
"(0,6)"; "(7,6)"**\dir{.};
"(0,7)"; "(7,7)"**\dir{.};
"(0,0)"; "(0,7)"**\dir{.};
"(1,0)"; "(1,7)"**\dir{.};
"(2,0)"; "(2,7)"**\dir{.};
"(3,0)"; "(3,7)"**\dir{.};
"(4,0)"; "(4,7)"**\dir{.};
"(5,0)"; "(5,7)"**\dir{.};
"(6,0)"; "(6,7)"**\dir{.};
"(7,0)"; "(7,7)"**\dir{.};
(0.5,0)="(0.5,0)";
(1.5,0)="(1.5,0)";
(2.5,0)="(2.5,0)";
(3.5,0)="(3.5,0)";
(4.5,0)="(4.5,0)";
(5.5,0)="(5.5,0)";
(6.5,0)="(6.5,0)";
(7,0.5)="(7,0.5)";
(7,1.5)="(7,1.5)";
(7,2.5)="(7,2.5)";
(7,3.5)="(7,3.5)";
(7,4.5)="(7,4.5)";
(7,5.5)="(7,5.5)";
(7,6.5)="(7,6.5)";
(2.5,6.5)*{\circ}="w1";
(3.5,5.5)*{\circ}="w2";
(0.5,4.5)*{\circ}="w3";
(5.5,3.5)*{\circ}="w4";
(1.5,2.5)*{\circ}="w5";
(4.5,1.5)*{\circ}="w6";
(6.5,0.5)*{\circ}="w7";
"w1"; "(2.5,0)"**\dir{-};
"w1"; "(7,6.5)"**\dir{-};
"w2"; "(3.5,0)"**\dir{-};
"w2"; "(7,5.5)"**\dir{-};
"w3"; "(0.5,0)"**\dir{-};
"w3"; "(7,4.5)"**\dir{-};
"w4"; "(5.5,0)"**\dir{-};
"w4"; "(7,3.5)"**\dir{-};
"w5"; "(1.5,0)"**\dir{-};
"w5"; "(7,2.5)"**\dir{-};
"w6"; "(4.5,0)"**\dir{-};
"w6"; "(7,1.5)"**\dir{-};
"w7"; "(6.5,0)"**\dir{-};
"w7"; "(7,0.5)"**\dir{-};
(1.5,5.5)*{\times};
(1.5,3.5)*{\bullet};
\endxy$ $\,\,\,\,\,\,\,$}
\subfigure[$v_3 (j=3)$]{\label{j3}$\,\,\,\,\,\,\,$ $\xy 0;/r.6pc/:
(0,0)="(0,0)";
(1,0)="(1,0)";
(2,0)="(2,0)";
(3,0)="(3,0)";
(4,0)="(4,0)";
(5,0)="(5,0)";
(6,0)="(6,0)";
(7,0)="(7,0)";
(1,7)="(1,7)";
(2,7)="(2,7)";
(3,7)="(3,7)";
(4,7)="(4,7)";
(5,7)="(5,7)";
(6,7)="(6,7)";
(7,7)="(7,7)";
(0,1)="(0,1)";
(0,2)="(0,2)";
(0,3)="(0,3)";
(0,4)="(0,4)";
(0,5)="(0,5)";
(0,6)="(0,6)";
(0,7)="(0,7)";
(7,1)="(7,1)";
(7,2)="(7,2)";
(7,3)="(7,3)";
(7,4)="(7,4)";
(7,5)="(7,5)";
(7,6)="(7,6)";
(7,7)="(7,7)";
 "(0,0)"; "(7,0)"**\dir{.};
  "(0,1)"; "(7,1)"**\dir{.};
"(0,2)"; "(7,2)"**\dir{.};
"(0,3)"; "(7,3)"**\dir{.};
"(0,4)"; "(7,4)"**\dir{.};
"(0,5)"; "(7,5)"**\dir{.};
"(0,6)"; "(7,6)"**\dir{.};
"(0,7)"; "(7,7)"**\dir{.};
"(0,0)"; "(0,7)"**\dir{.};
"(1,0)"; "(1,7)"**\dir{.};
"(2,0)"; "(2,7)"**\dir{.};
"(3,0)"; "(3,7)"**\dir{.};
"(4,0)"; "(4,7)"**\dir{.};
"(5,0)"; "(5,7)"**\dir{.};
"(6,0)"; "(6,7)"**\dir{.};
"(7,0)"; "(7,7)"**\dir{.};
(0.5,0)="(0.5,0)";
(1.5,0)="(1.5,0)";
(2.5,0)="(2.5,0)";
(3.5,0)="(3.5,0)";
(4.5,0)="(4.5,0)";
(5.5,0)="(5.5,0)";
(6.5,0)="(6.5,0)";
(7,0.5)="(7,0.5)";
(7,1.5)="(7,1.5)";
(7,2.5)="(7,2.5)";
(7,3.5)="(7,3.5)";
(7,4.5)="(7,4.5)";
(7,5.5)="(7,5.5)";
(7,6.5)="(7,6.5)";
(2.5,6.5)*{\circ}="w1";
(1.5,5.5)*{\circ}="w2";
(3.5,4.5)*{\circ}="w3";
(5.5,3.5)*{\circ}="w4";
(0.5,2.5)*{\circ}="w5";
(4.5,1.5)*{\circ}="w6";
(6.5,0.5)*{\circ}="w7";
"w1"; "(2.5,0)"**\dir{-};
"w1"; "(7,6.5)"**\dir{-};
"w2"; "(1.5,0)"**\dir{-};
"w2"; "(7,5.5)"**\dir{-};
"w3"; "(3.5,0)"**\dir{-};
"w3"; "(7,4.5)"**\dir{-};
"w4"; "(5.5,0)"**\dir{-};
"w4"; "(7,3.5)"**\dir{-};
"w5"; "(0.5,0)"**\dir{-};
"w5"; "(7,2.5)"**\dir{-};
"w6"; "(4.5,0)"**\dir{-};
"w6"; "(7,1.5)"**\dir{-};
"w7"; "(6.5,0)"**\dir{-};
"w7"; "(7,0.5)"**\dir{-};
(0.5,4.5)*{\times};
(0.5,3.5)*{\bullet};
\endxy$$\,\,\,\,\,\,\,$}
  \end{center}
  \caption{MT-moves using different $j's$}
  \label{branching}
\end{figure}

\begin{corollary}\label{algo}Let $w,u$ be two permutations both with $\ell(c(w))=\ell(c(u))=m$ (for the case when $\ell(c(w))\neq\ell(c(u))$, add enough ones to the front of one permutation). Assume $u$ is Grassmannian. Apply  MT-moves successively to $D(1^{m}\times w\times u)$. Stop applying MT-moves to a diagram $D$ as soon as all the boxes in its diagram are in the first $2m$ rows. Denote the multiset of the diagrams obtained this way by $A$. Then in the canonical expansion (\ref{expansion}) $F_w\cdot F_u=\sum_{v\in V}c_{wu}^vF_v$, we have $$c_{wu}^v=\#\{D\in A\mid D=D(v)\}.$$
\end{corollary}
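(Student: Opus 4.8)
The plan is to recognize the stated procedure as the MT-computation of the already-stabilized product $\s_{1^m\times w}\cdot\s_{1^m\times u}$, and then to read the coefficients off the stable expansion of Theorem~\ref{main}. After adding ones to the front of one factor so that $\ell(c(w))=\ell(c(u))=m$ with $u$ (hence $1^m\times u$) Grassmannian, both $1^m\times w$ and $1^m\times u$ are polynomials in $2m$ variables, so the theorem of \cite{koh} applies in $2m$ variables and yields
\[
\s_{1^m\times w}\cdot\s_{1^m\times u}=\s_{(1^m\times w)\times(1^m\times u)}\downarrow A_{2m},
\]
whose right-hand side is the sum, with multiplicity, of the good leaves of the MT-tree rooted at $(1^m\times w)\times(1^m\times u)$. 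Writing $1^m\times u=1\times(1^{m-1}\times u)$ and applying Lemma~\ref{same} $m$ times to peel off the block of ones between the two factors, this tree has the same leaves as the one rooted at $1^m\times w\times u$; so I may use $D(1^m\times w\times u)$ as the root, exactly as in the statement.

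Next I would verify that the diagrammatic stopping rule selects precisely the good leaves. Since the largest descent of a permutation equals the length of its code, ``all boxes lie in the first $2m$ rows'' is the same as ``largest descent $\le 2m$'', which is the defining condition of a good leaf. What must be secured is that no diagram occurring in the tree is \emph{bad} (has more than $2m$ boxes in its first column): such a diagram would never satisfy the stopping rule and would leave the procedure undefined. For this I would use two facts drawn from the diagram description of the MT-move in Section~3.2. First, the number of boxes in the first column is non-decreasing along every branch, since the move only pushes boxes up and to the left and a box already in the first column stays there; thus each diagram in the tree has first column at most that of its terminal leaf. Second, by the bound established in the proof of Theorem~\ref{full}(2), the first column of every leaf contains at most $\ell(c(w))+\ell(c(u))=2m$ boxes. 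Together these show that no diagram in the tree is bad, so $A$ is exactly the multiset of good leaves.

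Finally I would match the good leaves with the canonical expansion. By Theorem~\ref{full} the expansion has already stabilized at $n=m$, so no new permutations arise at any level $>m$, and Theorem~\ref{main}(2) gives
\[
\s_{1^m\times w}\cdot\s_{1^m\times u}=\sum_{i}\ \sum_{v\in V_i}c_{wu}^{\,v}\,\s_{1^{\,m-i}\times v}.
\]
As the Schubert polynomials are linearly independent, comparing this identity with the good-leaf sum shows that $A$, as a multiset of permutations, is $\{\,1^{m-i}\times v:v\in V_i\,\}$ with each $v$ occurring $c_{wu}^{\,v}$ times. Now $D(1^{m-i}\times v)$ is $D(v)$ with $m-i$ empty rows prepended (equivalently $F_{1^{m-i}\times v}=F_v$), so after deleting leading empty rows each leaf is recorded against the diagram $D(v)$ of its minimal representative; since distinct members of $V$ have distinct diagrams, it follows that $c_{wu}^{\,v}=\#\{D\in A\mid D=D(v)\}$.

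The step I expect to be the main obstacle is the well-definedness argument of the second paragraph: ruling out bad diagrams along the entire tree, not only at its leaves. This is precisely what allows the purely combinatorial stopping rule to reproduce the good-leaf multiset, and it depends on extracting both the monotonicity of the first-column count and the $2m$ bound from the diagram interpretation of the MT-move.
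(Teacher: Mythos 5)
Your proposal is correct and follows what is essentially the paper's intended derivation: the paper states Corollary \ref{algo} without a separate proof, precisely because it is meant to be assembled from Kohnert's theorem applied at level $m$, Lemma \ref{same} (to collapse the block of ones between the two factors), the stability guaranteed by Theorems \ref{main} and \ref{full}, and linear independence of Schubert polynomials --- exactly the three steps you carry out. Your second paragraph (ruling out bad leaves via the monotonicity of the first-column count together with the $\ell(c(w))+\ell(c(u))=2m$ bound from the proof of Theorem \ref{full}(2)) supplies a well-definedness check that the paper glosses over, and it is sound; the only cosmetic slips are that the level-$m$ identity should be credited to iterating Theorem \ref{main}(1) rather than to \ref{main}(2) (which is stated for $n\ge\ell(w)+\ell(u)$), and that $D(1^{m-i}\times v)$ is $D(v)$ shifted diagonally (empty rows \emph{and} columns prepended), which is the identification the corollary itself implicitly uses.
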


\section{Other Stable Expansions}
In this section, we study some other Stable expansions related with Schubert polynomials. Given a unique expansion, we study the behavior of that expansion when we embed $w\mapsto 1^n\times w$, as we did for Theorem \ref{main} and Theorem \ref{full}. We call the eventually stabilized behavior as described in Theorem \ref{main} \emph{weak stable property}; and if it further satisfies the property that once there are no new terms, there will be no new terms ever, as described in Theorem \ref{full}, we call it \emph{strong stable property}. First, as a direct corollary of Theorem \ref{main}, we have
\begin{corollary}\label{more} For the unique expansion of the product of finitely many Schubert polynomials into Schubert polynomials, we have the weak stable property, i.e., \begin{enumerate}
\item  Suppose $\s_{w_1}\cdots \s_{w_\ell}=\sum_{v_0\in V_0}c_{w_1,\dots,w_\ell}^{v_0}\s_{v_0}$.
Then $$\s_{1\times w_1}\cdots \s_{1\times w_\ell}=\sum_{v_0\in V_0}c_{w_1,\dots,w_\ell}^{v_0}\s_{1\times
v_0}+\sum_{v_1\in V_1}c_{w_1,\dots,w_\ell}^{v_1}\s_{v_1},$$ where $v_1(1)\neq 1$, for each $v_1\in V_1$.
\item Let $k=\ell(w_1)+\cdots+\ell(w_\ell)$. Then for all $n\ge k$, we have
$$\mathfrak{S}_{1^{n}\times w_1}\cdots \mathfrak{S}_{1^{n}\times w_\ell}=\sum_{v_0\in V_0}c^{v_0}_{w_1,\dots,w_\ell}\s_{1^{n}\times v_0}+\sum_{v_1\in V_1}c^{v_1}_{w_1,\dots,w_\ell}\s_{1^{n-1}\times v_1}+\cdots+\sum_{v_k\in V_k}c^{v_k}_{w_1,\dots,w_\ell}\s_{1^{n-k}\times v_k},$$
where $V_i$ (possibly empty) is the set of new permutations appearing in $\s_{1^i\times w_1}\cdots\s_{1^i\times w_\ell}$ compared to $\s_{1^{i-1}\times w}\cdot\s_{1^{i-1}\times u}$.
\end{enumerate}
\end{corollary}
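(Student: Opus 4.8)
The plan is to follow the proof of Theorem \ref{main} essentially line for line, the only genuinely new ingredient being the $\ell$-fold analogue of Lemma \ref{good}(4). I would deliberately avoid an induction on $\ell$: grouping $\s_{w_1}\cdots\s_{w_{\ell-1}}$ as a single factor and invoking Theorem \ref{main} is awkward, since the inductive expansion already carries different shifts $1^n,1^{n-1},\dots$ on its terms and so does not reduce cleanly to a two-factor product. Arguing directly from the combinatorial definition (\ref{combdef}) sidesteps this entirely.

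First I would prove the extension of Lemma \ref{good}(4): in $\s_{1^n\times w_1}\cdots\s_{1^n\times w_\ell}$ one has $\co(X_{b^1}\cdot g)=\co(X_{b^2}\cdot g)$ for every good-$n$ pair $(b^1,b^2)$ and every monomial $g$ in variables of index $>n$. The coefficient of $X_{b^1}\cdot g$ in the product is the sum, over all ways of distributing its variables among the $\ell$ factors, of the product of the resulting coefficients in $\s_{1^n\times w_1},\dots,\s_{1^n\times w_\ell}$. Such a distribution splits $X_{b^1}$ into sub-monomials $X_{\beta^{1,1}},\dots,X_{\beta^{1,\ell}}$ (in variables of index $\le n$) and $g$ into $g_1,\dots,g_\ell$. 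Using the position-wise correspondence between the weakly increasing sequences $b^1$ and $b^2$, the same distribution produces sub-monomials $X_{\beta^{2,1}},\dots,X_{\beta^{2,\ell}}$ of $X_{b^2}$ and hence a bijection on distributions. Applying part 3 of Lemma \ref{good} to each factor, the coefficient of $X_{\beta^{1,i}}\cdot g_i$ in $\s_{1^n\times w_i}$ equals that of $X_{\beta^{2,i}}\cdot g_i$; multiplying these $\ell$ equalities and summing over the bijection yields the claim.

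With this in hand the rest transfers verbatim. By (\ref{top}) the top-degree term of $\s_{w_1}\cdots\s_{w_\ell}$ is $X^{c(w_1)+\cdots+c(w_\ell)}$ with coefficient $1$, so the greedy top-term extraction of Section 2 produces the base expansion $\sum_{v_0\in V_0}c^{v_0}\s_{v_0}$ just as for two factors. For part 2 I would repeat the code argument with $k=\ell(w_1)+\cdots+\ell(w_\ell)$: if a permutation $v$ occurring in $\s_{1^n\times w_1}\cdots\s_{1^n\times w_\ell}$ had a gap in its code (a zero at some $j$ with $i(v)<j\le n$), then the good-$n$ pair $(b^1,b^2)=(b(c(v)_n),b(c'))$ together with $g=X^{(c_{n+1},\dots)}$ would force $\co(X_{b^1}\cdot g)>\co(X_{b^2}\cdot g)$ on the right while the extended lemma forces equality on the left, a contradiction. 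Since $c_1+\cdots+c_n\le k$, every code then carries at least $n-k$ leading zeros, so each $v$ begins with $12\cdots(n-k)$, which is the asserted stabilization. Part 1 follows most cleanly by setting $x_1=0$: from (\ref{combdef}) one has $\s_{1\times w}(0,x_2,x_3,\dots)=\s_w(x_2,x_3,\dots)$, the surviving compatible sequences being exactly those of $w$ shifted up by one, so the left side specializes to $(\s_{w_1}\cdots\s_{w_\ell})(x_2,x_3,\dots)$; on the right $\s_{v'}(0,x_2,\dots)$ vanishes precisely when $v'(1)\ne 1$ and equals $\s_{v''}(x_2,\dots)$ when $v'=1\times v''$, so comparison and linear independence give $c^{1\times v_0}_{1\times w_1,\dots,1\times w_\ell}=c^{v_0}_{w_1,\dots,w_\ell}$ for $v_0\in V_0$ while the remaining new terms $v_1$ must satisfy $v_1(1)\ne 1$.

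The one step demanding care is the extended Lemma \ref{good}(4). Unlike the two-factor case, a distribution now breaks $X_{b^1}$ into $\ell$ \emph{sub-multisets} of variables rather than into two consecutive blocks, and I must verify that $b^1\sim b^2$ descends to each part, i.e.\ that $\beta^{1,i}\sim\beta^{2,i}$ for every $i$. This holds because similar sequences have aligned constant runs: if two selected positions of $b^1$ carry equal values they lie in a common maximal constant run, on which $b^2$ is constant as well, and if they carry distinct values then some consecutive strict increase between them forces a strict increase in $b^2$. Checking this run-alignment, so that part 3 of Lemma \ref{good} may be applied to all $\ell$ blocks simultaneously, is the main (though modest) obstacle; once it is in place the corollary is immediate.
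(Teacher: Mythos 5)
Your proof is correct, but it takes a genuinely different route from the paper's. The paper gives this corollary no independent argument at all: it is asserted as a direct consequence of Theorem \ref{main}, i.e., the intended derivation is to iterate the two-factor result (expand $\s_{1^n\times w_1}\cdots\s_{1^n\times w_{\ell-1}}$ inductively, absorb the shift mismatch via $\s_{1^{n-i}\times v}\cdot\s_{1^n\times w_\ell}=\s_{1^{n-i}\times v}\cdot\s_{1^{n-i}\times(1^i\times w_\ell)}$, and apply Theorem \ref{main} to each pair). Contrary to your stated worry, that induction does go through for the qualitative weak stable property, exactly because the shifts can be pushed into the second factor; but it is lossy: each $v$ in the partial product has length $k'=\ell(w_1)+\cdots+\ell(w_{\ell-1})$, so the pairwise theorem stabilizes the $i$-th summand only for $n\ge k+i$, hence the whole product only for $n\ge k+k'$, and recovering the stated threshold $n\ge k$ and the clean description of the $V_i$ as the new permutations at step $i$ needs supplementary bookkeeping. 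Your direct generalization of the Section 2 argument buys exactly this sharp form: the $\ell$-fold analogue of Lemma \ref{good}(4) is the right key statement, the run-alignment point you isolate (equal values of $b^1$ lie in a maximal constant run, which matches a constant run of $b^2$ since $b^1\sim b^2$ and both are weakly increasing, so a distribution of the variables of $X_{b^1}$ among the $\ell$ factors induces a well-defined corresponding distribution for $X_{b^2}$, with each $(\beta^{1,i},\beta^{2,i})$ again a good-$n$ pair) is the only genuinely new ingredient and you verify it correctly, and the code-gap contradiction then transfers verbatim to give stabilization for all $n\ge k$. Your specialization argument for part 1 is also a valid and cleaner variant of the paper's divisibility remark: $\s_{1\times w}(0,x_2,x_3,\dots)=\s_w(x_2,x_3,\dots)$ by shifting compatible sequences, while $\s_{v}(0,x_2,\dots)=0$ whenever $v(1)\neq 1$ because every reduced word of such $v$ contains the letter $1$, forcing $x_1$ into every monomial; linear independence then pins down the coefficients on $V_0$.
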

\subsection{Product of Double Schubert polynomials}
For the Double Schubert polynomials, we have the following connection to Schubert polynomials (for example, see Prop 2.4.7 in \cite{schbook})
$$\s_w(x,y)=\sum_{\substack{{w=v^{-1}u}\\{\ell(w)=\ell(u)+\ell(v)}}}\s_u(x)\s_v(-y).$$
Then consider the product of two double Schubert polynomials
$$\s_{1^n\times w}(x,y)\s_{1^n\times w'}(x,y)=
\sum_{\substack{{w=v^{-1}u}\\{\ell(w)=\ell(u)+\ell(v)}\\{w'=v'^{-1}u'}\\{\ell(w')=\ell(u')+\ell(v')}}}\s_{1^n\times u}(x)\s_{1^n\times v}(-y)\s_{1^n\times u'}(x)\s_{1^n\times v'}(-y).$$
By Corollary \ref{more}, we have
\begin{corollary}For the unique expansion of the product of finitely many double Schubert polynomials into Schubert polynomials, we have the weak stable property.
\end{corollary}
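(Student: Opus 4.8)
The plan is to push the product through the connecting formula $\s_w(x,y)=\sum_{w=v^{-1}u,\,\ell(w)=\ell(u)+\ell(v)}\s_u(x)\s_v(-y)$ so that the problem reduces entirely to the already-established weak stability of products of ordinary Schubert polynomials (Corollary \ref{more}), exploiting that the $x$- and $y$-variables are disjoint.

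First I would expand each factor $\s_{1^n\times w_j}(x,y)$ via the connecting formula, obtaining $\prod_{j}\s_{1^n\times w_j}(x,y)=\sum_{(\vec u,\vec v)}\bigl(\prod_{j}\s_{1^n\times u_j}(x)\bigr)\bigl(\prod_{j}\s_{1^n\times v_j}(-y)\bigr)$, where the sum runs over tuples of length-additive factorizations $w_j=v_j^{-1}u_j$. Because $x$ and $y$ are disjoint variable sets, each summand splits as a product of single Schubert polynomials purely in $x$ times a product purely in $-y$. The crucial preliminary point is that the outer index set is independent of $n$: I claim the length-additive factorizations of $1^n\times w_j$ are exactly the images $(1^n\times u_j,\,1^n\times v_j)$ of factorizations of $w_j$. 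This holds because $1^n\times w_j$ lies in the parabolic subgroup generated by $\{s_i\mid i>n\}$ (all its inversions live in positions and values $>n$), so in any length-additive factorization both factors lie in that subgroup and hence fix $1,\dots,n$; the length identity transfers verbatim since $\ell(1^n\times w_j)=\ell(w_j)$.

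Next I would apply Corollary \ref{more} separately to $\prod_{j}\s_{1^n\times u_j}(x)$ in the $x$-variables and to $\prod_{j}\s_{1^n\times v_j}(-y)$ in the $-y$-variables; each is a product of finitely many Schubert polynomials and hence has the weak stable property. It then remains to assemble these two one-variable-set stabilities into a single stability for the expansion in the product basis $\{\s_a(x)\s_b(-y)\}$, under the diagonal embedding $(a,b)\mapsto(1\times a,1\times b)$ induced by $w_j\mapsto 1\times w_j$. The key observation is that a term $\s_{1^{n-i}\times a}(x)\,\s_{1^{n-i'}\times b}(-y)$ is the image under $1^{\,n-\max(i,i')}\times$ of the pair $(1^{\max(i,i')-i}\times a,\;1^{\max(i,i')-i'}\times b)$, so it first appears at stage $\max(i,i')$ and thereafter only acquires more leading $1$'s, which is exactly weak stable behavior. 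Finally I would sum over the fixed finite index set of factorizations and over the finitely many double Schubert factors: since all coefficients are nonnegative Schubert structure constants and the signs have been absorbed into the basis elements $\s_b(-y)$, there is no cancellation, and a finite nonnegative combination of weak-stable expansions is again weak-stable.

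The hard part will be the assembly step: making precise a single weak-stable framework for the product basis when the $x$-part and the $-y$-part stabilize at possibly different stages $i\neq i'$, since $1^n\times$ acts on both variable sets simultaneously. The $\max(i,i')$ bookkeeping above is what resolves this, and I expect the remaining verifications—closure of the weak stable property under the tensor product of the two variable sets and under finite nonnegative sums—to be routine once the $n$-independence of the factorization index set is in hand.
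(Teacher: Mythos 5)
Your proposal is correct and takes essentially the same route as the paper: expand each double Schubert polynomial via the connecting formula $\s_w(x,y)=\sum\s_u(x)\s_v(-y)$ and reduce to the weak stability of products of ordinary Schubert polynomials (Corollary \ref{more}). The only difference is that you spell out two points the paper leaves implicit --- that the length-additive factorizations of $1^n\times w$ are exactly the $1^n\times$-images of those of $w$, and the $\max(i,i')$ bookkeeping needed to merge the separate $x$- and $(-y)$-stabilities --- both of which you verify correctly.
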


\subsection{Stable expansion between $\s_w$ and $e_I$}
We write (\ref{combdef}) as $$\s_{w}=\sum_{a\in \N^{\infty}}K_{w,a}X^a,$$
where each $K_{w,a}$ is nonnegative integers, and $K=(K_{w,a})$ is known as the Schubert-Kostka matrix.
Let $$e_i^k=\sum_{1\le r_1<\cdots<r_i\le k}x_{r_1}\cdots x_{r_i},$$
and for $I=(i_1,i_2,\dots,i_n)$, let $$e_I=e_{i_1}^1e_{i_2}^2\cdots e_{i_n}^n.$$
Notice that $e_i^k=0$, if $i>k$, so we require $i_k\le k$ in $I$.

The following result is well known.
\begin{proposition}[{see \cite{poly}, \cite[(2.6)-(2.7)]{fonct}, \cite[(4.13)]{non}}]\label{basis}We have the following $\Z$-linear bases for $\Z[x_1,\dots,x_n]/ I_n$, and each of them spans the same vector space which is complementary to $I_n$.
\begin{enumerate}
\item the monomials $x_1^{a_1}\cdots x_{n-1}^{a_{n-1}}$ such that $0\le a_k\le n-k$;
\item the standard elementary monomials $e_{i_1i_2\dots i_{n-1}}$;
\item the Schubert polynomials $\s_w$ for $w\in S_n$.
\end{enumerate}
\end{proposition}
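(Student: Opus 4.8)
The plan is to place all three families inside a single free $\Z$-module and reduce every assertion to a unimodular change of basis there. Let $H_n\subseteq\Z[x_1,\dots,x_n]$ be the $\Z$-span of the sub-staircase monomials $x^a=x_1^{a_1}\cdots x_{n-1}^{a_{n-1}}$ with $0\le a_k\le n-k$. These monomials are linearly independent in the polynomial ring, so $H_n$ is free with $\operatorname{rank}H_n=\prod_{k=1}^{n-1}(n-k+1)=n!$. The first point is that each of the three families lies in $H_n$ and has exactly $n!$ members. For family (1) this is the definition (and the count is the product above). For family (2), in any monomial of $e_I=e^1_{i_1}\cdots e^{n-1}_{i_{n-1}}$ a variable $x_r$ can only be supplied by a factor $e^k_{i_k}$ with $k\ge r$, and each such factor contributes $x_r$ to at most the first power; hence $x_r$ occurs to power at most $\#\{k:r\le k\le n-1\}=n-r$, so $e_I\in H_n$, and there are $\prod_{k=1}^{n-1}(k+1)=n!$ admissible sequences $I$. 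For family (3), equation (\ref{top}) shows every monomial $X_b$ of $\s_w$ satisfies $b\le b(c(w))$ termwise; since $c_i(w)=\#\{j>i:w(j)<w(i)\}\le n-i$ for $w\in S_n$, the vector $c(w)$ is sub-staircase and therefore so is every such $b$, giving $\s_w\in H_n$ (and $|S_n|=n!$).

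The second point (which also yields family (1) and the final clause of the statement) is that the composite $H_n\injects\Z[x_1,\dots,x_n]\twoheadrightarrow\Z[x_1,\dots,x_n]/I_n=:R_n$ is an isomorphism of $\Z$-modules; equivalently $\Z[x]=H_n\oplus I_n$, so $H_n$ is the common complement to $I_n$ referred to in the statement. Surjectivity is the classical straightening: the relations $e_i(x_1,\dots,x_n)\equiv 0$ let one rewrite any monomial modulo $I_n$ as a $\Z$-combination of sub-staircase monomials (this is the Artin presentation; the $e_i$ form a regular sequence and the sub-staircase monomials are the associated standard monomials). Injectivity over $\Z$ follows from a rank count: Chevalley's theorem gives $\dim_\Q(R_n\otimes\Q)=n!=\operatorname{rank}H_n$, so the induced surjection $H_n\otimes\Q\to R_n\otimes\Q$ of equal finite dimension is an isomorphism; since $H_n$ is torsion-free this forces $H_n\to R_n$ to be injective, hence an isomorphism, and $R_n$ is $\Z$-free of rank $n!$. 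In particular the sub-staircase monomials descend to a $\Z$-basis of $R_n$, proving (1).

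Because $H_n$ is free of rank $n!$ on the sub-staircase monomials, a family of $n!$ elements of $H_n$ is a $\Z$-basis of $H_n$ exactly when the matrix expressing it in that monomial basis is unimodular, and by the isomorphism $H_n\cong R_n$ such a family is then automatically a $\Z$-basis of $R_n$. For family (3) this is immediate from (\ref{top}): ordering $S_n$ by the reverse-lexicographic order of the codes, we have $\s_w=X^{c(w)}+\sum_{b<b(c(w))}X_b$, all $X_b$ being sub-staircase and strictly lower, so after invoking the Lehmer bijection $w\mapsto c(w)$ between $S_n$ and the sub-staircase vectors the transition matrix is unitriangular, hence unimodular. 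For family (2) one runs the same argument: taking reverse-lex leading terms factor by factor gives $e_I=x^{a(I)}+(\text{strictly lower sub-staircase terms})$ for an explicit sub-staircase vector $a(I)$, and one checks that $I\mapsto a(I)$ is a bijection from $\{(i_1,\dots,i_{n-1}):0\le i_k\le k\}$ onto the sub-staircase vectors, again giving a unitriangular, hence unimodular, transition matrix.

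I expect the main obstacle to be twofold. First, the genuinely substantive input is the isomorphism $H_n\cong R_n$ of the second paragraph: it rests on the straightening/regular-sequence fact together with the Chevalley dimension count, which is precisely where the cited references do the real work; within the present paper it is cleanest to quote these as the Artin presentation of the coinvariant algebra. Second, for part (2) the only computation that must be carried out with care is the determination of the leading monomial $a(I)$ of each elementary monomial and the verification that $I\mapsto a(I)$ is a bijection onto the sub-staircase vectors. By contrast, part (3) is essentially free once (\ref{top}) is in hand, and part (1) is the classical statement itself.
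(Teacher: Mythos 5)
The paper itself gives no proof of this proposition --- it is quoted from the literature with citations --- so your attempt has to stand on its own merits. Its architecture is sound: embed all three families in the free $\Z$-module $H_n$ spanned by the sub-staircase monomials, prove $H_n\cong \Z[x_1,\dots,x_n]/I_n$, and reduce each basis claim to unimodularity of a transition matrix. Your treatment of family (1) and of the isomorphism $H_n\cong R_n$ is fine modulo the standard facts you quote. But both of your unimodularity verifications have gaps, and the one for family (2) is fatal. For the factor-by-factor computation of leading terms you need a multiplicative (monomial) order, and for \emph{any} such order the assignment $I\mapsto \mathrm{LT}(e_I)$ fails to be injective already for $n=3$: any monomial order on $\Z[x_1,x_2]$ has either $x_1>x_2$ or $x_2>x_1$. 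In the first case $\mathrm{LT}(e_1^1)=\mathrm{LT}(e_1^2)=x_1$, so $I=(1,0)$ and $I=(0,1)$ receive the same leading monomial; in the second case $x_1x_2>x_1^2$, so $\mathrm{LT}(e_1^1e_1^2)=\mathrm{LT}(x_1^2+x_1x_2)=x_1x_2=\mathrm{LT}(e_2^2)$, and $I=(1,1)$ clashes with $I=(0,2)$. Hence no term order makes the transition matrix from $\{e_I\}$ to the monomial basis unitriangular, and the step ``one checks that $I\mapsto a(I)$ is a bijection'' is false. The conclusion still holds, but it needs a genuinely different argument: for instance, show the $e_I$ \emph{span} $R_n$ (straightening, using $e_i^n\equiv 0 \bmod I_n$ and the recurrence $e_i^{k-1}=e_i^k-x_k\,e_{i-1}^{k-1}$), and then use that a spanning set of cardinality $n!=\operatorname{rank}R_n$ in a free $\Z$-module is automatically a basis; alternatively, prove triangularity of $\{e_I\}$ against the Schubert basis rather than against monomials, which is what the cited references do.

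The second gap, in family (3), is repairable but real: termwise domination $b\le b(c(w))$ of weakly increasing index sequences does \emph{not} imply that $X_b$ is sub-staircase, so (\ref{top}) alone does not give $\s_w\in H_n$. For $w=321\in S_3$ one has $b(c(w))=(1,1,2)$ and $b=(1,1,1)\le (1,1,2)$, yet $X_b=x_1^3$ violates $a_1\le n-1=2$. The containment $\s_w\in H_n$ for $w\in S_n$ is a standard fact, but it must be justified differently --- e.g.\ $\s_{w_0}=x_1^{n-1}x_2^{n-2}\cdots x_{n-1}$, and each divided difference $\partial_i$ preserves the property that every monomial divides $x_1^{n-1}x_2^{n-2}\cdots x_{n-1}$, so all $\s_w$ with $w\in S_n$ inherit it. Once that is in place, your unitriangularity argument for (3), via the Lehmer-code bijection $w\mapsto c(w)$ and the leading term from (\ref{top}), does go through.
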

By Proposition \ref{basis}, we have the unique expansions of $e_I$ into $\s_w$ and $\s_w$ into $e_I$. In this subsection, we will prove the stable property for these two unique expansions.
\begin{proposition}\label{etos}For $I=(i_1,\dots,i_n)$, we have the strong stable property for the expansion $$e_I=\sum_{w\in W}\beta_w^I\s_w,$$ i.e., for all $k\ge r=i_1+i_2+\cdots+i_n-n$, we have
 $$e_{(0^k,i_1,i_2,\dots,i_n)}=\sum_{w\in W}\beta_w^I\s_{1^k\times w}+\sum_{w_1\in W_1}\beta_{w_1}^{I}\s_{1^{k-1}\times w}+\cdots++\sum_{w_k\in W_r}\beta_{w_r}^I\s_{1^{k-r}\times w},$$
 where for $i=1,\dots,r$, we have $W_i\neq\emptyset$ and $W_r$ is the single permutation $w_r=23\cdots(r+n+1)1$. Moreover, for $1\le k<r$, $W_k$ is the set of new permutations added in the expansion of $e_{(0^k,i_1,i_2,\dots,i_n)}$ from $e_{(0^{k-1},i_1,i_2,\dots,i_n)}$.
\end{proposition}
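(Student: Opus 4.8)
The plan is to reduce the statement to a product of Schubert polynomials, for which weak stability is already available, and then to sharpen it to strong stability using the first-column (diagram) analysis of Section~3. First I would identify each factor of $e_{(0^k,i_1,\dots,i_n)}$ as a shifted Schubert polynomial. Since $e_i^k=s_{1^i}(x_1,\dots,x_k)$ is the Schur polynomial of a single column, it is $\s_{w}$ for the Grassmannian permutation $w$ with code $(0^{k-i},1^{i})$. Letting $w_j$ be the Grassmannian permutation with code $(0^{j-i_j},1^{i_j})$, so that $e_{i_j}^{j}=\s_{w_j}$, prepending $\kappa$ zeros to the flag superscript prepends $\kappa$ zeros to the code, i.e. $e_{i_j}^{\kappa+j}=\s_{1^{\kappa}\times w_j}$. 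Hence
$$e_{(0^k,i_1,\dots,i_n)}=\prod_{j=1}^{n}e_{i_j}^{k+j}=\prod_{j=1}^{n}\s_{1^{k}\times w_j},$$
which is exactly a product of $n$ Schubert polynomials under the common embedding $w_j\mapsto 1^k\times w_j$. The weak stable property, the displayed stabilized form, and the fact that every new $w_t\in W_t$ has $w_t(1)\ne 1$ are then immediate from Corollary~\ref{more}; in particular $W_0=W$ and $\beta^I_w=c^{w}_{w_1,\dots,w_n}$.

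To upgrade weak to strong stability I would exploit that $e_{(0^k,I)}$ is homogeneous of degree $i_1+\cdots+i_n=r+n$ for every $k$ (the prepended zeros only contribute factors $e_0^{\,j}=1$). Thus every $\s_v$ that occurs has $\ell(v)=r+n$, so $D(v)$ has exactly $r+n$ boxes. A new core $v\in W_t$ admits no further MT-move, so by Lemma~\ref{reduced} the length of its code equals the number of boxes in its first column, namely $v^{-1}(1)-1$. As the first column cannot contain more than the total number of boxes, $v^{-1}(1)-1\le r+n$, with equality if and only if the whole diagram is a single column of height $r+n$, i.e. $c(v)=(1^{r+n})$. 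This forces $v=23\cdots(r+n+1)1$ (a Grassmannian permutation, with code $(1^{r+n})$), which is therefore the unique core able to attain the maximal first column.

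It remains to convert this first-column bound into the sharp level bound, and this is the step I expect to be the main obstacle. Here I would run the MT-tree/diagram analysis of Section~3 on the concatenation $1^{k}\times w_1\times\cdots\times w_n$, generalizing Lemma~\ref{same}, Lemma~\ref{one} and Corollary~\ref{algo} from two factors to $n$. The key claim to establish is that the $n$ Grassmannian blocks allow at most $n$ boxes to be pushed into the first column with no prepended ones, while each prepended $1$ supplies exactly one further box; equivalently, a core whose first column has height $d$ first appears at level $\max\{0,d-n\}$. Granting this, the deepest level is $(r+n)-n=r$, attained only by $23\cdots(r+n+1)1$, giving $W_r=\{23\cdots(r+n+1)1\}$ and $W_t=\emptyset$ for $t>r$. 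That no intermediate $W_t$ is empty (hence $W_1,\dots,W_r\ne\emptyset$) follows from the no-gap statement Lemma~\ref{gap}, applied to the multiset of first-column heights of the cores: these heights form an interval $[a,r+n]$, so every value between the level-$0$ minimum and the maximum $r+n$ is realized, and the corresponding levels fill $\{0,1,\dots,r\}$ without gaps. Finally $\beta^I_{w_r}=1$ because $23\cdots(r+n+1)1$ is the unique permutation of maximal first column, so the greedy reverse-lex straightening of $e_{(0^r,I)}$ produces $\s_{w_r}$ exactly once.

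I expect the genuine difficulty to be confined to the third paragraph: pinning the ``free'' first-column capacity to exactly $n$ for a product of $n$ Grassmannians whose code-lengths differ. Corollary~\ref{algo} is stated for two factors of equal code-length, and the delayed-box bookkeeping of the proof of Lemma~\ref{same} must be re-derived for the $n$-fold concatenation; once that capacity count and the no-gap property are in place in this multi-factor setting, the rest of the argument is routine.
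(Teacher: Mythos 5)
You have a genuine gap, and you have located it yourself: everything that makes the statement \emph{strong} (rather than weak) stability is deferred to an unproven ``key claim.'' Your first two paragraphs are fine --- the identification $e_{(0^k,i_1,\dots,i_n)}=\prod_{j=1}^n\s_{1^k\times w_j}$ with $w_j$ Grassmannian of code $(0^{j-i_j},1^{i_j})$ is correct, weak stability then follows from Corollary~\ref{more}, and the homogeneity argument correctly shows that $23\cdots(r+n+1)1$ is the \emph{only} permutation whose first column could reach the maximal height $r+n$. But the actual content of Proposition~\ref{etos} --- that new permutations appear at every level $1,\dots,r$, that none appear after level $r$, and that $W_r$ is that singleton --- rests entirely on your capacity claim (``a core with first column height $d$ first appears at level $\max\{0,d-n\}$'') together with an $n$-factor no-gap lemma, and you prove neither. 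These are not routine extensions: Lemma~\ref{same}, Lemma~\ref{gap} and Corollary~\ref{algo} are established in the paper only for two factors, with one factor Grassmannian and equal code lengths, via Kohnert's theorem and the delayed-box bookkeeping; even the non-emptiness of $W_r$ (i.e.\ that the full-column permutation actually occurs with nonzero coefficient, which you attribute to an unsubstantiated ``greedy reverse-lex straightening'' remark) is not derived. As written, the proposal proves weak stability plus a uniqueness statement conditional on the very claim that constitutes the theorem.

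It is worth noting that the paper takes a different route that avoids the MT/diagram machinery entirely, and it resolves exactly the difficulty you defer. The paper first proves Lemma~\ref{onee} via the Pieri rule (Proposition~\ref{pieri}): for a \emph{single} factor, the expansion of $e_i^{j+k}\s_{1^k\times w}$ acquires new terms at every step $0<k\le i-m$ and none afterwards, where $m$ is the minimal number of transpositions needed to move the letter $1$ past position $j$ in $w$, with exactly one new permutation at the last step. The Pieri operators $T_{i_1,j_1}\cdots T_{i_r,j_r}$ make the ``one extra unit of capacity per prepended $1$, no gaps'' phenomenon transparent for one factor at a time. The proposition then follows by applying this lemma left to right along $((e_{i_1}^1e_{i_2}^2)e_{i_3}^3)\cdots e_{i_n}^n$, chaining the one-step strong stabilities and tracking the step at which the last new term $\s_{23\cdots(|I|+1)1}$ appears. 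If you want to salvage your approach, the honest comparison is that your third paragraph is asking for an $n$-factor, unequal-code-length version of Lemma~\ref{same}, Lemma~\ref{gap} and Corollary~\ref{algo}, whereas the Pieri-rule decomposition lets one sidestep that generalization by never multiplying more than one elementary factor at a time.
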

To prove Proposition \ref{etos}, we need a lemma, which uses the  Pieri rule.
\begin{proposition}[Pieri rule]\label{pieri}Define the following operator on Schubert polynomials:
\begin{equation*}
T_{i,j}\s_w=\begin{cases} \s_{wt_{i,j}} & \text{ if } \ell(wt_{i,j})=\ell(w)+1,\\
0 & \text{ otherwise}.
\end{cases}
\end{equation*}
We have
$$e_r^k\s_w=\sum T_{i_1,j_1}T_{i_2,j_2}\dots T_{i_r,j_r}\s_w,$$
summing over $i_1,\dots,i_r\le k<j_1,\dots,j_r$, such that
$i_1,\dots,i_r$ are all distinct and $j_1\le \dots\le j_r$.
\end{proposition}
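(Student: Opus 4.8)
The plan is to prove the identity by induction, taking Monk's rule (recalled in the Introduction) as the base case. Since $e_1^k=x_1+\cdots+x_k=\s_{t_{k,k+1}}$, the case $r=1$ is exactly Monk's rule: $e_1^k\s_w=\sum_{i\le k<j}T_{i,j}\s_w$. It is useful first to read the right-hand side geometrically. Because $T_{i,j}\s_v=\s_{vt_{i,j}}$ precisely when $\ell(vt_{i,j})=\ell(v)+1$ and is $0$ otherwise, a nonzero term $T_{i_1,j_1}\cdots T_{i_r,j_r}\s_w$ equals $\s_v$ for $v=wt_{i_r,j_r}\cdots t_{i_1,j_1}$ exactly when $w\lessdot wt_{i_r,j_r}\lessdot\cdots\lessdot v$ is a saturated chain in Bruhat order whose steps are transpositions $t_{i_t,j_t}$ with $i_t\le k<j_t$. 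Thus the Proposition asserts that the coefficient of $\s_v$ in $e_r^k\s_w$ counts such chains in which the small indices $i_1,\dots,i_r$ are distinct and the large indices satisfy $j_1\le\cdots\le j_r$.

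My main tool would be the operator computing multiplication by a single variable, which follows from Monk's rule via $x_m=e_1^m-e_1^{m-1}$: a short computation gives $x_m\cdot\s_w=\sum_{j>m}T_{m,j}\s_w-\sum_{i<m}T_{i,m}\s_w$. Since the multiplications by the $x_m$ commute, I would expand the multiplication operator as $e_r^k\cdot(-)=[t^r]\prod_{m=1}^k(1+tx_m)$ and prove by induction on $k$ that $\prod_{m=1}^k(1+tx_m)=\sum_r t^r P_r^k$, where $P_r^k$ denotes the operator on the right-hand side of the Proposition. Passing from $k-1$ to $k$ amounts to multiplying by $(1+tx_k)$: the ``raising'' part $\sum_{j>k}T_{k,j}$ of $x_k$ appends a new step with small index $k$, automatically distinct from the earlier small indices (all $<k$) and recorded in the canonical weakly-increasing-$j$ order, while the ``lowering'' part $-\sum_{i<k}T_{i,k}$ is designed to cancel exactly the contributions at level $k-1$ that use a step with $j=k$, which are disallowed once the variable range is extended to $x_k$. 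Making this precise is the operator incarnation of the symmetric-function recursion $e_r^k=e_r^{k-1}+x_k e_{r-1}^{k-1}$.

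I expect this cancellation-and-reindexing to be the main obstacle: one must exhibit a length-preserving, sign-reversing pairing that cancels every signed chain produced by a lowering transposition $t_{i,k}$ against a unique positive chain, and then verify that the surviving chains are precisely those with distinct $i_t$ and weakly increasing $j_t$, with no overcounting. This is where the Bruhat-covering hypotheses $\ell(vt_{i,j})=\ell(v)+1$ must be used carefully, since whether a step survives depends on these length conditions. As an alternative that sidesteps the bookkeeping, I would note that the statement is a reformulation of the classical Pieri rule for Schubert polynomials (conjectured by Lascoux--Sch\"utzenberger and by Bergeron--Billey, proved by Sottile and by Winkel), specialized to the Schur polynomial $s_{(1^r)}(x_1,\dots,x_k)=e_r^k$; the proof would then reduce to checking that the chain conditions in those references translate into ``the $i_t$ are distinct and the $j_t$ are weakly increasing.''
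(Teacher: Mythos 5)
The first thing to say is that the paper itself contains no proof of this Proposition: it is imported as the known Pieri rule, whose history the Introduction records (conjectured by Lascoux--Sch\"utzenberger and, in another form, by Bergeron--Billey; proved by Sottile geometrically and by Winkel combinatorially), and the chain/operator form stated here also appears in the Postnikov--Stanley reference used elsewhere in the same section. So your fallback route --- specialize the classical Pieri rule to $s_{(1^r)}(x_1,\dots,x_k)=e_r^k$ and match conventions --- is exactly the paper's treatment, and in this context it is the appropriate way to discharge the statement. Be aware, though, that even this step is not purely formal: the cited sources state the chain conditions quite differently (Sottile's $k$-Bruhat order conditions, Bergeron--Billey's RC-graph form), and translating them into ``the $i_t$ are distinct and the $j_t$ weakly increasing'' is a genuine verification, not a remark.

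Your primary route, by contrast, has a real gap at precisely the step you flag, and it is not mere bookkeeping. The framework is fine: $x_m\s_w=\sum_{j>m}T_{m,j}\s_w-\sum_{i<m}T_{i,m}\s_w$ does follow from Monk's rule, multiplication operators commute, and $e_r^k=e_r^{k-1}+x_ke_{r-1}^{k-1}$ reduces everything to the operator identity $P_r^k=P_r^{k-1}+X_kP_{r-1}^{k-1}$, where $X_k$ is the Monk operator for $x_k$ and $P_r^k$ is the right-hand side of the Proposition. But the $T_{i,j}$ do not commute in general, and the new factor $T_{k,j}$ enters on the left (applied last), which is generally the wrong slot for the canonical weakly-increasing-$j$ ordering; moving it into place requires commutation statements such as ``$T_{a,b}T_{c,d}=T_{c,d}T_{a,b}$ for disjoint $\{a,b\},\{c,d\}$,'' which are true but already need a Bruhat-order argument (the intermediate cover conditions along the two paths are not literally the same condition). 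Worse, the cancellation of the lowering terms forces three-term identities: already for $r=k=2$ one needs, for each $j>2$, that exactly one of the chains through $wt_{2j}$ or through $wt_{1j}$ survives when the chain through $wt_{12}$ does, and that neither survives otherwise --- a diamond-property case analysis in Bruhat order, which is the actual content of the rule. This is exactly the point at which Lascoux--Sch\"utzenberger's sketched algebraic proof was left incomplete, and why the full proofs in the literature are substantial. So as written your induction is a program rather than a proof: either carry out the sign-reversing involution with the cover conditions tracked explicitly, or rely on the citation, as the paper does.
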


\begin{lemma}\label{onee}The unique expansion of $e_i^j\s_w=\sum_{w\in W}\beta_w^I\s_{w}$ into Schubert polynomials has strong stable property, i.e.,
there exist $r$ such that for all $k\ge r$, we have
 $$e_i^{j+k}\s_{1^k\times w}=\sum_{w\in W}\beta_w^I\s_{1^k\times w}+\sum_{w_1\in W_1}\beta_{w_1}^{I}\s_{1^{k-1}\times w}+\cdots++\sum_{w_k\in W_r}\beta_{w_r}^I\s_{1^{k-r}\times w},$$
 where for $i=1,\dots,r$, we have $W_i\neq\emptyset$. Moreover, for $1\le k<r$, $W_k$ is the set of new permutations added to the expansion
 of $e_i^{j+k}\s_{1^k\times w}$ compared to the expansion of $e_i^{j+k-1}\s_{1^{k-1}\times w}$.
\end{lemma}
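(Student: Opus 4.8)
The plan is to recognize the left factor $e_i^j$ as a Grassmannian Schubert polynomial and then invoke the Grassmannian case of Theorem~\ref{full}, already established in Section~3. Recall that $e_i^j$ equals the Schur polynomial $s_{1^i}(x_1,\dots,x_j)$, so $e_i^j=\s_g$ where $g$ is the Grassmannian permutation with unique descent at position $j$ and shape $1^i$; explicitly $g=1\,2\,\cdots\,(j-i)\,(j-i+2)\,(j-i+3)\,\cdots\,(j+1)\,(j-i+1)$, whose code has length $\ell(c(g))=j$. The one thing I must verify is how $g$ interacts with the embedding $u\mapsto 1^k\times u$. First I would check, directly on one-line notations (or on codes), that $1^k\times g$ is again Grassmannian with the \emph{same} shape $1^i$ but descent shifted to $j+k$, so that $\s_{1^k\times g}=s_{1^i}(x_1,\dots,x_{j+k})=e_i^{j+k}$. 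This is the crucial bookkeeping identity: increasing the number of variables in $e_i$ is exactly the $1^k$-embedding of the associated Grassmannian permutation.

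Granting this identity, the expansion under study becomes
\[
e_i^{j+k}\cdot\s_{1^k\times w}=\s_{1^k\times g}\cdot\s_{1^k\times w},
\]
which is precisely the product $\s_{1^k\times u}\cdot\s_{1^k\times w}$ treated in Theorem~\ref{main} and Theorem~\ref{full} with the Grassmannian choice $u=g$. Theorem~\ref{main} then supplies the stabilized form of the expansion as $k$ grows, organizing the terms into $W=V_0,W_1,\dots$ with the stated degree shifts, and Theorem~\ref{full}(1) supplies the no-gap property: once some $W_i$ is empty, every later one is empty as well. Reading off the smallest vanishing index yields the required $r$, and the lemma's indexing ($W_1,\dots,W_r$ all nonempty, empty thereafter) is just a relabeling of the stability number produced by Theorem~\ref{full}. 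The bound $r\le\max\{\ell(c(w)),j\}$ coming from Theorem~\ref{full}(2) is then automatic, though it is not needed for the statement, which asserts only the existence of $r$.

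The main obstacle lies entirely in the first paragraph: confirming $e_i^{j+k}=\s_{1^k\times g}$ with no off-by-one error, since a shift of the descent position by exactly $k$ is what makes the reduction work, and getting it wrong would break the match between ``adding a variable to $e_i$'' and ``prepending a $1$.'' I would handle the degenerate case $i=j$ (where $g=2\,3\,\cdots\,(j+1)\,1$ and $e_j^j=x_1\cdots x_j$) and the generic case $i<j$ uniformly through the partition-to-Grassmannian dictionary, so that no special casing is required. As an alternative that keeps everything internal to this section, one could instead run the Pieri rule (Proposition~\ref{pieri}) directly on $e_i^{j+k}\cdot\s_{1^k\times w}$, tracking the admissible sequences $i_1,\dots,i_r\le j+k<j_1\le\cdots\le j_r$ as $k$ increases; this reproves both the stabilization and the no-gap behavior by hand, but it essentially re-derives Theorem~\ref{full} for this special case and is more laborious, so I would relegate it to a remark. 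Finally, this lemma is the single-factor engine for Proposition~\ref{etos}: applying it successively to the factors $e_{i_1}^1,\dots,e_{i_n}^n$ propagates the strong stable property to the full product $e_I$.
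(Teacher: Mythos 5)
Your reduction is correct: $e_i^j=s_{1^i}(x_1,\dots,x_j)=\s_g$ for the Grassmannian permutation $g$ you describe, the bookkeeping identity $\s_{1^k\times g}=e_i^{j+k}$ holds with no off-by-one error (the code of $1^k\times g$ is $(0^{\,k+j-i},1^i)$, so its unique descent sits at $j+k$ and its shape is still $1^i$), and Theorems \ref{main} and \ref{full} applied to the pair $(w,g)$ then deliver exactly the stabilization and the no-gap property asserted in Lemma \ref{onee}. This is, however, a genuinely different route from the paper, which proves the lemma directly from the Pieri rule (Proposition \ref{pieri}): it sets $m$ to be the minimal number of transpositions needed to move the letter $1$ past position $j$ in $w$, observes that new terms arise exactly when the letter $1$ in $1^k\times w$ can be exchanged, and concludes that new permutations appear for all $0<k\le i-m$, never afterwards, with exactly one new permutation at the final step $k=i-m$. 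The trade-off matters for the architecture of the paper. The direct Pieri argument keeps Section 4 independent of Sections 2--3; this is essential because Lemma \ref{onee} is combined with Proposition \ref{stoe} in Remark \ref{2p} to give a \emph{second} proof of Theorem \ref{main}, and that second proof becomes circular if the lemma is itself deduced from Theorems \ref{main} and \ref{full} (it would also inherit Section 3's reliance on Theorem 3.1 of \cite{koh}). Moreover, the paper's proof yields sharper information than the bare statement: the explicit stability number $r=i-m$ and the uniqueness of the last new permutation, which is precisely what the proof of Proposition \ref{etos} exploits when it identifies $W_r$ with the single permutation $23\cdots(r+n+1)1$. Your approach buys brevity and reuse of the main theorems, and it proves the lemma as stated; but to serve the lemma's downstream purposes you would need the Pieri-rule argument you relegated to a remark --- which is in fact the paper's own proof.
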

\begin{proof}Use Proposition \ref{pieri}. Let $m$ be the minimal number of simple transformations we need in order to move the letter $1$ to a position after $j$ in $w$. In the next step, we
consider $e_{i}^{j+1}\s_{1\times w}$. We will get new terms if we can exchange the letter $1$ in $1\times w$ with some other letter. Then we can show that
 $e_{i}^{j+k}\s_{1^k\times w}$ will have new terms in the expansion for all $0<k\le i-m$ and there will be
 no new terms if $k>i-m$. In step $k=i-m$, there is exactly one new permutation.
\end{proof}

\begin{proof}[proof of Proposition \ref{etos}]Use Lemma \ref{onee} from left to right, with $e_{i_1}^1=\s_w$, $$((e_{i_1}^1e_{i_2}^2)e_{i_3}^3)\cdots e_{i_n}^n,$$ 
 For $e_{i_1}^1e_{i_2}^2$, let $\s_{w_1}$ be the last new term added at step $n_1=i_2-1$. Then consider
$\s_{w_1}e_{i_3}^{k+n_1}$, let $\s_{w_2}$ be the last new term at step $n_1+n_2$, with $n_2=i_3-1$, etc. We have the last new term added at step
$N=n_1+\dots+n_n-n$, is $\s_{23\dots (|I|+1)1}$. Before this step, there are always new terms being added to the expansion.
\end{proof}

For the expansion of $\s_w$ into $e_I$, we have the following stable property:

\begin{proposition}\label{stoe}For the unique expansion $\s_w=\sum_{I\in N^{\infty}}a_I^we_I$, we have weak stable property, i.e.,
in the expansion $\s_{1\times w}=\sum_{J\in N^{\infty}}b_J^{1\times w}e_J$, we have
$$b_{0I}^{1\times w}=a_{I}^{w}.$$ 
$\s_{1\times w}=\sum_{J\in N^{\infty}}b_J^{1\times w}e_J$.
\end{proposition}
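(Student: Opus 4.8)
The plan is to apply the specialization homomorphism $\rho : \Z[x_1,x_2,\dots] \to \Z[x_2,x_3,\dots]$ defined by $\rho(x_1)=0$ and $\rho(x_k)=x_k$ for $k\ge 2$ to the (genuine polynomial) identity $\s_{1\times w}=\sum_J b_J^{1\times w}e_J$, and then to read off the coefficients. The argument rests on two computations: first, that $\rho(\s_{1\times w})$ is just $\s_w$ with its variables relabelled $x_k\mapsto x_{k+1}$; and second, that $\rho$ annihilates every $e_J$ with $j_1=1$ while sending $e_{0I}$ to the relabelled $e_I$. Granting these, write $\s_{1\times w}=\sum_I b_{0I}^{1\times w}e_{0I}+\sum_{j_1=1}b_J^{1\times w}e_J$ (recall $j_1\le 1$, so $j_1\in\{0,1\}$), apply $\rho$, and obtain $\s_w(x_2,x_3,\dots)=\sum_I b_{0I}^{1\times w}\,e_I(x_2,x_3,\dots)$. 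On the other hand, relabelling variables in $\s_w=\sum_I a_I^w e_I$ gives $\s_w(x_2,x_3,\dots)=\sum_I a_I^w\,e_I(x_2,x_3,\dots)$. Since $x_k\mapsto x_{k+1}$ is a ring isomorphism onto $\Z[x_2,x_3,\dots]$, it carries the linearly independent family $\{e_I\}$ of Proposition \ref{basis} to the linearly independent family $\{e_I(x_2,x_3,\dots)\}$; comparing the two expressions term by term therefore yields $b_{0I}^{1\times w}=a_I^w$, which is the assertion.

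For the first computation I would use the combinatorial formula (\ref{combdef}). The permutation $1\times w$ fixes the value $1$ and acts on $\{2,\dots,n+1\}$ as $w$ does on $\{1,\dots,n\}$, so every inversion of $1\times w$ lies in positions $\ge 2$ and one never needs the generator $s_1$; hence $R(1\times w)=\{(a_1+1,\dots,a_p+1)\mid (a_1,\dots,a_p)\in R(w)\}$. For a shifted word $a'=(a_1+1,\dots,a_p+1)$, the map $(i_1,\dots,i_p)\mapsto(i_1-1,\dots,i_p-1)$ is a bijection from the $a'$-compatible sequences with $i_1\ge 2$ onto the $a$-compatible sequences, and it multiplies the corresponding monomial by the relabelling $x_k\mapsto x_{k+1}$; the remaining $a'$-compatible sequences have $i_1=1$ and contribute monomials divisible by $x_1$. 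Summing over all reduced words gives $\s_{1\times w}=\s_w(x_2,x_3,\dots)+x_1 g$ for some polynomial $g$, so $\rho(\s_{1\times w})=\s_w(x_2,x_3,\dots)$.

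The second computation is immediate from $e_0^1=1$ and $e_1^1=x_1$: for any admissible $J=(j_1,j_2,\dots)$ we have $e_J=x_1^{\,j_1}\prod_{k\ge 2}e_{j_k}^k$ (using $j_1\le 1$), so $\rho(e_J)=0$ whenever $j_1=1$. For $J=0I=(0,i_1,\dots,i_n)$ we have $e_{0I}=e_{i_1}^2 e_{i_2}^3\cdots e_{i_n}^{n+1}$, and since $\rho(e_{i_k}^{k+1})=e_{i_k}(x_2,\dots,x_{k+1})=e_{i_k}^{k}(x_2,x_3,\dots)$ for each factor, we get $\rho(e_{0I})=\prod_{k=1}^{n}e_{i_k}^{k}(x_2,x_3,\dots)=e_I(x_2,x_3,\dots)$, as needed.

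I expect the only genuinely delicate point to be the first computation, namely the precise identity $\rho(\s_{1\times w})=\s_w(x_2,x_3,\dots)$; the reduced-word description of $1\times w$ and the compatible-sequence bijection must be spelled out carefully, though both are standard consequences of (\ref{combdef}). The rest is linear algebra once one keeps track of the index bookkeeping $I\mapsto 0I$, where prepending a zero shifts each superscript $e_{i_k}^{k}$ up to $e_{i_k}^{k+1}$ while leaving the degrees $i_k$ untouched. Finally, this argument establishes only the single embedding step $w\mapsto 1\times w$, i.e.\ the weak stable property claimed; the coefficients $b_J^{1\times w}$ with $j_1=1$ are precisely the ``new'' terms, and iterating the step would describe the stabilized expansion, although that is not asserted here.
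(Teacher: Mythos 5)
Your proof is correct, but it takes a genuinely different route from the paper's. The paper deduces Proposition \ref{stoe} from the inverse Schubert--Kostka matrix: by Lemma \ref{ps1} the coefficients in question are entries of $K^{-1}$, prepending a zero to the elementary index corresponds to replacing $a$ by $(a+1,0)$ and the permutation $u$ by $(n+1)\times u$, and the claim becomes the identity $K^{-1}_{(a+1,0),(n+1)\times u}=K^{-1}_{a,u}$, which is then verified with the signed formula of Lemma \ref{ps2} (all extra terms vanish because $w(\rho_{n+1})_{n+1}\neq 0$ produces a Kostka entry indexed by a vector with a negative coordinate). You instead set $x_1=0$: the key identity $\s_{1\times w}|_{x_1=0}=\s_w(x_2,x_3,\dots)$, which your shift bijection on reduced words and compatible sequences correctly extracts from (\ref{combdef}), combined with your computations that the specialization kills $e_J$ when $j_1=1$ and sends $e_{0I}$ to $e_I(x_2,x_3,\dots)$, plus the linear independence guaranteed by Proposition \ref{basis}. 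Your route is more elementary and self-contained---it uses nothing beyond (\ref{combdef}) and Proposition \ref{basis}, avoiding the results quoted from \cite{ps}---and it makes visible why the coefficients with $j_1=1$ are exactly the ``new'' terms; the paper's route, in exchange, identifies the coefficients $a_I^w$ explicitly as inverse Kostka numbers, which is finer information and lines up with the conventions of \cite{ps}. One small point to tighten in your write-up: when you split $\sum_J b_J^{1\times w}e_J$ according to $j_1\in\{0,1\}$, admissibility of $J=0I$ only forces $i_k\le k+1$, so the first sum a priori runs over some non-admissible tails $I$ (those with some $i_k=k+1$); these cause no harm, since for such $I$ one has $e_I=0$ by the paper's convention $e_i^k=0$ for $i>k$, and the specialization likewise kills the corresponding $e_{0I}$ (the factor becomes $e_{k+1}$ in $k$ variables), so your comparison of coefficients over admissible $I$ stands.
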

To prove this stable property, we use the following two lemmas:
\begin{lemma}[{\cite[page 31]{ps}}]\label{ps1}For $w=w_1\cdots w_n\in S_n$, we have
$$\s_{ww_0}=\sum_{a}K_{a,w}^{-1}e_{w_0(\rho_n-a)},$$ where $w_0=n(n-1)\cdots 1$,
$\rho_n=(n-1,n-2,\cdots,1,0)$, and $K^{-1}=(K_{a,w}^{-1})$ is the inverse of $K$.
\end{lemma}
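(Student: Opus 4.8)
The plan is to read the identity off a Poincar\'e duality pairing on the coinvariant algebra, after interpreting $K$ and $K^{-1}$ as mutually inverse change-of-basis matrices. By Proposition \ref{basis} the Schubert polynomials $\{\s_w : w\in S_n\}$, the staircase monomials $\{X^a : 0\le a_k\le n-k\}$, and the standard elementary monomials $\{e_I\}$ are three $\Z$-bases of $R_n:=\Z[x_1,\dots,x_n]/I_n$. The Schubert--Kostka matrix records $\s_w=\sum_a K_{w,a}X^a$, so its inverse records $X^a=\sum_w K^{-1}_{a,w}\s_w$; since every $\s_w$ and every $X^a$ is homogeneous, $K^{-1}_{a,w}=0$ unless $\ell(w)=|a|$. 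I will also use that the socle of $R_n$ is one-dimensional, spanned by $\s_{w_0}=X^{\rho_n}$, which is exactly (\ref{top}) applied to $w_0$.

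The main tool is the pairing $\langle f,g\rangle$ defined as the coefficient of the socle class $\s_{w_0}$ in the product $fg$ computed in $R_n$ (equivalently $\partial_{w_0}(fg)$, the longest divided difference applied to $fg$). Two duality statements drive the argument. First, Schubert self-duality: $\langle \s_u,\s_v\rangle=\delta_{v,w_0u}$, which is the statement that the top structure constant $c_{uv}^{w_0}$ equals $\delta_{v,w_0u}$; in particular $\{\s_u\}$ and $\{\s_{w_0u}\}$ are dual bases. Second, and this is the heart of the matter, a duality between the staircase-monomial basis and the elementary-monomial basis, namely $\langle X^a,e_{w_0(\rho_n-b)}\rangle=\delta_{a,b}$. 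Here $b\mapsto w_0(\rho_n-b)$ is a bijection between the staircase exponent vectors and the admissible elementary indices, and it is precisely the reindexing appearing in the statement.

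Granting these two dualities, the identity is linear algebra. If $\{X^a\}$ and $\{e_{w_0(\rho_n-a)}\}$ are dual, then for any $f$ the coefficient of $e_{w_0(\rho_n-a)}$ in $f$ equals $\langle X^a,f\rangle$. Applying this to $f=\s_{ww_0}$, expanding $X^a=\sum_u K^{-1}_{a,u}\s_u$, and using Schubert self-duality collapses the sum to a single term: $\langle X^a,\s_{ww_0}\rangle=\sum_u K^{-1}_{a,u}\delta_{ww_0,w_0u}=K^{-1}_{a,\,w_0ww_0}$. Reading this off for all $a$ gives $\s_{ww_0}=\sum_a K^{-1}_{a,\,w_0ww_0}\,e_{w_0(\rho_n-a)}$, which is the stated formula once the column index $w_0ww_0$ is matched to $w$.

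I expect the monomial--elementary duality $\langle X^a,e_{w_0(\rho_n-b)}\rangle=\delta_{a,b}$ to be the main obstacle, together with the convention bookkeeping that turns $w_0ww_0$ into $w$. To establish the duality I would compute $\partial_{w_0}$ of a staircase monomial times a standard elementary monomial, using that $\partial_{w_0}(X^c)$ is $\pm1$ or $0$ according to whether $c$ is a permutation of $\rho_n$, and that each factor $e_{i_k}^k$ is symmetric in $x_1,\dots,x_k$ and therefore interacts with $\partial_{w_0}$ in a triangular, controlled way; this can equally be packaged through the Pieri rule of Proposition \ref{pieri}. The delicate point is that \cite{ps} uses a Schubert/Kostka convention differing from ours by an inverse, so one must track every reversal carefully to see whether self-duality lands the permutation index on $ww_0$ (as stated) or on $w_0w$, and to confirm that the complementation $w_0(\rho_n-\cdot)$ is the reindexing that cancels the residual $w_0$-twist. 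Making all these reversals consistent is the only genuine labor in the proof.
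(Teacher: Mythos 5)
You should first know what you are being compared against: the paper does not prove this lemma at all --- it is imported verbatim from Postnikov--Stanley \cite{ps} --- so your proposal stands or falls on its own correctness. It falls, because the duality you yourself flag as ``the heart of the matter,'' namely $\langle X^a, e_{w_0(\rho_n-b)}\rangle=\delta_{a,b}$, is not merely unproven but false, and no reindexing bijection can repair it. Take $n=3$ and the pairing $\langle f,g\rangle=$ (coefficient of $\s_{321}=x_1^2x_2$ in $fg$ reduced mod $I_3$). The dual basis element to the staircase monomial $x_2$ is $x_1^2$: indeed $\langle x_1,x_1^2\rangle=0$ since $x_1^3\equiv 0$ in $R_3$, while $\langle x_2,x_1^2\rangle=1$. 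But $x_1^2$ is not $\pm e_I$ for any standard elementary monomial: the only degree-two ones are $e_{(1,1)}=e_1^1e_1^2=x_1^2+x_1x_2$ and $e_{(0,2)}=e_0^1e_2^2=x_1x_2$, both of which pair to $1$ with $x_1$; in fact the dual of $x_2$ is $e_{(1,1)}-e_{(0,2)}$. So the staircase monomials are \emph{not} dual to any signed relabeling of the $e_I$'s; the pairing matrix is only unitriangular, and a unitriangular correction is exactly what your ``linear algebra'' step cannot absorb, since you need an exact Kronecker delta. There is a second, independent failure that you half-noticed: even granting your duality, your computation yields the coefficient $K^{-1}_{a,\,w_0ww_0}$, whereas the lemma asserts $K^{-1}_{a,w}$, and these genuinely differ --- e.g.\ $K^{-1}_{(2,0,0),312}=1$ (as $x_1^2=\s_{312}$) while $K^{-1}_{(2,0,0),231}=0$ (as $x_1x_2=\s_{231}$), and $312=w_0\,231\,w_0$ --- so no amount of convention bookkeeping turns one column into the other. (Your remaining ingredients are fine: $\langle \s_u,\s_v\rangle=\delta_{v,w_0u}$ is correct, as are the homogeneity of $K^{-1}$ and the socle statement.)

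The workable proof is a Cauchy-kernel computation, not a dual-basis one. Expand $\prod_{i+j\le n}(x_i+y_j)$ in two ways: factoring over $j$ and collecting the coefficient of $y^b$ gives $\sum_{b\le\rho_n}\bigl(\prod_{m=1}^{n-1}e_{m-b_{n-m}}(x_1,\dots,x_m)\bigr)y^b$, i.e.\ standard elementary monomials in $x$ indexed by the complementation appearing in the statement; by the Cauchy formula for Schubert polynomials the same product equals $\sum_{v\in S_n}\s_{vw_0}(x)\,\s_v(y)$. Substituting $y^b=\sum_w K^{-1}_{b,w}\s_w(y)$ into the first expansion and matching coefficients of $\s_w(y)$ (the Schubert polynomials being a $\Z$-basis of all polynomials in $y$) yields $\s_{ww_0}=\sum_{b}K^{-1}_{b,w}\,e_{\,\cdot}$ directly, with the correct column index $w$ rather than $w_0ww_0$ coming out automatically; this is essentially the Lascoux--Sch\"utzenberger derivation \cite{fonct} that \cite{ps} relies on. One caveat that partly explains the bookkeeping pain you anticipated: testing $n=2$, $w=\mathrm{id}$ against the statement as transcribed here gives $\s_{w_0}=x_1$ on the left but $e_{(0,1)}=e_0^1e_1^2=x_1+x_2$ on the right, so under this paper's convention $e_I=e_{i_1}^1\cdots e_{i_n}^n$ the superscripts in the lemma are off by one (the entry of $w_0(\rho_n-a)$ in position $m+1$ should be the subscript of an elementary polynomial in $x_1,\dots,x_m$). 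That transcription wrinkle is real, but it is separate from, and does not rescue, the two gaps above.
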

\begin{lemma}[{\cite[Proposition 17.3]{ps}}]\label{ps2}For any $u\in S_n$ and $a\in \N^{\infty}$, we have
$$K_{a,u}^{-1}=\sum_{w\in S_n}(-1)^{\ell(w)}K_{w_0u,w(\rho_n)-a},$$
where $w$ acts on a vector as rearranging the coordinates, e.g., $312(2,1,0)=(0,2,1)$.
\end{lemma}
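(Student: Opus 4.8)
The plan is to identify the inverse Schubert--Kostka entry $K^{-1}_{a,u}$ as a coefficient in the Artin basis and then extract that coefficient by pairing against a dual basis built from the longest divided difference operator. Working inside the quotient $R_n=\Z[x_1,\dots,x_n]/I_n$ of Proposition~\ref{basis}, the matrix $K$ is graded and block-diagonal by degree: since $\s_w$ is homogeneous of degree $\ell(w)$, we have $K_{w,a}=0$ unless $|a|=\ell(w)$, so $K^{-1}$ is the inverse of each square invertible degree block, and $X^a=\sum_u K^{-1}_{a,u}\s_u$ with the surviving $u$ satisfying $\ell(u)=|a|$. Thus $K^{-1}_{a,u}$ is exactly the coefficient of $\s_u$ in the reduction of $X^a$ modulo $I_n$.

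To read off that coefficient I would introduce the top-coefficient functional $\langle f\rangle:=(\partial_{w_0}f)|_0$, the constant term of the longest divided difference. Because $\partial_{w_0}\s_v=\delta_{v,w_0}$, we get $\langle f\rangle=[\s_{w_0}]\,f$, the coefficient of the top class. The key input is the Schubert duality $\langle\s_v\,\s_b\rangle=c_{v,b}^{w_0}=\delta_{b,\,w_0v}$, i.e. a product of two Schubert polynomials meets the top class with multiplicity one precisely when the indices are Poincar\'e dual; this is standard and also follows from iterating the Leibniz rule for $\partial_{w_0}$. Pairing the expansion $X^a=\sum_v K^{-1}_{a,v}\s_v$ against $\s_{w_0u}$ then collapses the sum:
$$\langle X^a\,\s_{w_0u}\rangle=\sum_v K^{-1}_{a,v}\,\delta_{w_0u,\,w_0v}=K^{-1}_{a,u}.$$

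It then remains to compute $\langle X^a\,\s_{w_0u}\rangle$ directly. Expanding $\s_{w_0u}=\sum_c K_{w_0u,c}X^c$ reduces the problem to the monomial values $\langle X^{a+c}\rangle$, which I would evaluate using the antisymmetrization formula $\partial_{w_0}f=\mathcal{A}(f)/a_\rho$, where $\mathcal{A}(f)=\sum_{w\in S_n}(-1)^{\ell(w)}w(f)$ and $a_\rho=\mathcal{A}(x^{\rho_n})$ is the Vandermonde. A monomial $X^\gamma$ has $\mathcal{A}(x^\gamma)=0$ whenever two exponents coincide, and a nonzero constant term of $\partial_{w_0}X^\gamma$ forces $|\gamma|=\binom{n}{2}$ with distinct exponents, i.e. $\gamma$ is a rearrangement $w(\rho_n)$ of $\rho_n$, giving $\langle X^\gamma\rangle=(-1)^{\ell(w)}$ and $\langle X^\gamma\rangle=0$ otherwise. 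Substituting $\gamma=a+c$, so that the surviving terms satisfy $c=w(\rho_n)-a$, yields
$$K^{-1}_{a,u}=\sum_c K_{w_0u,c}\,\langle X^{a+c}\rangle=\sum_{w\in S_n}(-1)^{\ell(w)}K_{w_0u,\,w(\rho_n)-a},$$
which is the desired identity.

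The main obstacle is the careful justification of the duality step: one must verify that $\langle\cdot\rangle$ descends to $R_n$ (so the pairing is well defined on the quotient, using that $\partial_{w_0}$ annihilates elements of $I_n$), that $\langle\s_v\,\s_b\rangle=\delta_{b,w_0v}$ holds, and that the grading bookkeeping guarantees $K^{-1}_{a,u}$ and the right-hand side are simultaneously supported on $|a|=\ell(u)$ (both sides vanish unless $|a|=\ell(u)$, since $|w(\rho_n)-a|=\binom{n}{2}-|a|$ must equal $\ell(w_0u)$). Once the top-coefficient functional and Schubert duality are established, the evaluation of $\langle X^\gamma\rangle$ through the Vandermonde quotient is a short and routine calculation, so I expect the conceptual weight of the argument to lie entirely in setting up the pairing and its self-duality correctly.
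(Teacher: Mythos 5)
The paper contains no proof of this lemma: it is imported verbatim from Postnikov--Stanley \cite[Proposition 17.3]{ps} and used only as an input to Proposition \ref{stoe}, so there is no internal argument to compare against. Your proposal supplies an actual proof, and it is correct; it is essentially the standard duality argument (the same mechanism as in the cited source). All three steps check out: (a) $K^{-1}_{a,u}$ is the coefficient of $\s_u$ in the Schubert expansion of $X^a$, and for $u\in S_n$ this is computable modulo $I_n$; (b) the functional $\langle f\rangle=(\partial_{w_0}f)(0)$ extracts the coefficient of $\s_{w_0}$, and Poincar\'e duality $\langle \s_v\,\s_{w_0u}\rangle=\delta_{u,v}$ collapses the pairing to $K^{-1}_{a,u}$; (c) antisymmetrization gives $\langle X^\gamma\rangle=(-1)^{\ell(w)}$ if $\gamma=w(\rho_n)$ and $0$ otherwise, since distinct nonnegative exponents summing to $\binom{n}{2}$ force the sorted exponent vector to be exactly $\rho_n$. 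Substituting $\gamma=a+c$ over the monomials $X^c$ of $\s_{w_0u}$ then yields the stated formula with the correct signs (easily confirmed for $n=2$: $x_2=\s_{132}-\s_{213}$ gives $K^{-1}_{(0,1),s_1}=-1$, matching the $w=s_1$ term), and the degree bookkeeping you flag does make both sides vanish unless $|a|=\ell(u)$.

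Two small repairs to your justifications. First, $\partial_{w_0}$ does \emph{not} annihilate $I_n$: for $n=2$, $x_1(x_1+x_2)\in I_2$ but $\partial_{s_1}(x_1^2+x_1x_2)=x_1+x_2\neq 0$. The correct reason $\langle\cdot\rangle$ descends to the quotient is $\Lambda_n$-linearity of divided differences, $\partial_{w_0}(g\,e)=e\,\partial_{w_0}(g)$ for $e$ symmetric, so when $e$ has positive degree the constant term of $\partial_{w_0}(g\,e)$ vanishes. Second, identifying the coefficient mod $I_n$ with the genuine inverse-matrix entry $K^{-1}_{a,u}$ (defined by $X^a=\sum_w K^{-1}_{a,w}\s_w$ in the polynomial ring, where $w$ runs over permutations with no descent past position $n$) uses the standard fact that $\s_w\in I_n$ for every such $w\notin S_n$, so that reduction kills exactly the non-$S_n$ terms without disturbing the $S_n$ coefficients; and you implicitly assume $a$ is supported in the first $n$ coordinates, which is the setting in which the lemma is applied (if $a_j>0$ for some $j>n$, every $w(\rho_n)-a$ has a negative coordinate and both sides vanish). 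Neither point affects the validity of the argument.
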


\begin{proof}[Proof of Proposition \ref{stoe}]Consider Lemma \ref{ps1}.
Notice that $w_0(\rho_n-a)=(0-a_n,1-a_{n-1},\dots,n-1-a_1)$, and $w_0(\rho_{n+1}-b)=(0-b_{n+1},1-b_n,\dots,n-b_1)$.
Assume $w_0(\rho_{n+1}-b)=(0,w_0(\rho_n-a))$. Then $b=(a+1,0)$. Assume $1\times ww_0=uw_0$. Then $u=(n+1)\times w$.
So to prove the result, it suffices to show that
$$K^{-1}_{(a+1,0),(1+n)\times u}=K^{-1}_{a,u}.$$
Then by Lemma \ref{ps2}, we have
$$K_{(a+1,0),(1+n)\times u}^{-1}=\sum_{w\in S_{n+1}}(-1)^{\ell(w)}K_{w_0((1+n)\times u),w(\rho_{n+1})-(a+1,0)}.$$
Notice that $w_0((1+n)\times u)=u$ and $w(\rho_{n+1})-(a+1,0)=w(\rho_n)-a$ if $w(\rho_{n+1})_{n+1}=0$.
But $w(\rho_{n+1})_{n+1}\neq 0$, $w(\rho_{n+1})-(a+1,0)\notin \N^{\infty}$, so $K_{w_0((1+n)\times u),w(\rho_{n+1})-(a+1,0)}=0$.
Therefore, $K^{-1}_{(a+1,0),(1+n)\times u}=K^{-1}_{a,u}$.\qedhere
\end{proof}
\begin{remark}\label{2p}
Consider the expansion of two Schubert polynomials $\s_w\s_u$ into Schubert polynomials again as we studied in Theorem \ref{main}. By Proposition \ref{stoe}, we can get a stabilized expansion of $\s_w$ into the $e_I$'s.  Then, by Lemma \ref{onee}, the expansion of each term $e_I\s_u$ into Schubert polynomials stabilizes. This way, we get a second proof of Theorem \ref{main}.
\end{remark}
\subsection*{Acknowledgements.}
I thank Richard Stanley, Alex Postnikov and Steven Sam for
helpful discussions. I am also very grateful for the anonymous referee from Fpsac 2012 for some very good suggestion.

\bigskip

\filbreak \noindent Nan Li\\
Department of Mathematics\\
Massachusetts Institute of Technology\\
Cambridge, MA 02139\\
{\tt nan@math.mit.edu}

\end{document}